\theoremstyle{plain}
\theoremstyle{plain}
\newtheorem{theorem}{Theorem}[section]
\newtheorem{lemma}[theorem]{Lemma}
\theoremstyle{definition}
\newtheorem{defin}[theorem]{Definition}
\newtheorem{remark}[theorem]{Remark}
\newtheorem{example}{Example}
\theoremstyle{remark}
\def\bk{\color{black}}
\numberwithin{equation}{section}
\def\dis{\displaystyle}
\DeclareMathOperator{\diver}{div}
\DeclareMathOperator{\R}{\mathbb{R}}
\newcommand{\car}[1]{\raise1pt\hbox{$\chi$}_{#1}}
\newcommand{\DM }{\mathcal{DM}^\infty }
\def\rn{\mathbb{R}^N}
\newcommand{\res}{\!\!\mathop{\hbox{
			\vrule height 7pt width .5pt depth 0pt
			\vrule height .5pt width 6pt depth 0pt}}
	\nolimits}
\begin{document}
\title{1-Laplacian type problems  with  strongly singular nonlinearities and gradient terms}

\author[D. Giachetti]{Daniela Giachetti}
\author[F. Oliva]{Francescantonio Oliva}
\author[F. Petitta]{Francesco Petitta}

\address{Dipartimento di Scienze di Base e Applicate per l' Ingegneria, Sapienza Universit\`a di Roma, Via Scarpa 16, 00161 Roma, Italia}
\address[Daniela Giachetti]{daniela.giachetti@sbai.uniroma1.it}
\address[Francescantonio Oliva]{francesco.oliva@sbai.uniroma1.it}
\address[Francesco Petitta]{francesco.petitta@sbai.uniroma1.it}

\keywords{1-Laplacian, Nonlinear elliptic equations, Singular elliptic equations, Gradient terms} \subjclass[2010]{35J25, 35J60,  35J75, 35R99, 35A01}

\begin{abstract}
We show optimal existence, nonexistence and regularity results for nonnegative solutions to Dirichlet problems as 
$$
\begin{cases}
\dis -\Delta_1 u = g(u)|D u|+h(u)f & \text{in}\;\Omega,\\
u=0 & \text{on}\;\partial\Omega,
\end{cases}
$$
where $\Omega$ is an open bounded subset of $\mathbb{R}^N$,  $f\geq 0$ belongs to $L^N(\Omega)$, and $g$ and $h$ are continuous functions  that may blow up at zero.  As a noteworthy fact we show how a non-trivial interaction mechanism between the two nonlinearities $g$ and $h$ produces remarkable regularizing effects on the solutions. \bk
The sharpness of our main results is discussed through the use of appropriate explicit examples. 
\end{abstract}

\maketitle

\tableofcontents

\section{Introduction}

We consider homogeneous Dirichlet problems as 
 \begin{equation} \label{pb1lapintro}
				 \begin{cases}
					 \displaystyle -\Delta_1 u = g(u)|D u|+ h(u)f &  \text{in}\, \Omega, \\
					 u\geq 0 &  \text{in}\, \Omega,\\
					 u=0 & \text{on}\ \partial \Omega,			
				 \end{cases}
			 \end{equation}
where $\Omega$ is a bounded open subset of $\R^N$ with Lipschitz boundary,  $f\in L^{N}(\Omega)$ is a nonnegative function, $g(s)$ and $h(s)$ are  nonnegative continuous functions defined on $[0,\infty)$,  and  possibly singular at $s=0$ (i.e. $g(0)=\infty$ and/or $h(0)=\infty$). 

  Here $\Delta_1 u$ is the formal limit of the $p$-laplace operator as $p\to1^+$; i.e. $\Delta_1 u = {\rm div} (Du/|Du|)$.   The natural space to set  this kind of problems is  $BV$ (or its local version $BV_{\rm loc}$), the space of functions of bounded variation, i.e. the space of $L^1$ functions whose gradient is a Radon measure with finite (or locally finite) total variation. The ratio appearing in the definition of the $1$-laplace operator   $\frac{D u}{|D u|}$ has to be interpreted as  the Radon-Nikodym derivative of the measure $Du$ with respect to its total variation $|Du|$.  Two  of the most striking  differences with the $p>1$ case rely in the non-compactness of the traces (the boundary datum needs not to be attained point-wise) and a structural non-uniqueness phenomenon based on the homogeneity of the operator.

\medskip

If one considers the  autonomous case without gradient terms (i.e. $h\equiv 1$ and $g\equiv 0$),  problems involving the $1$-laplace   operator  arise in  the study of   image restoration as well as in  torsion problems (\cite{K, ka, Sapiro, M, BCRS}).  The non-autonomous and non-singular case (again with $g\equiv 0$) has also been considered in frameworks of   more theoretic nature as eigenvalues problems and  critical Sobolev exponent  (see \cite{KS, D} and references therein).   Also, $1$-laplace type operators  are   known to be closely related to the mean curvature operator (\cite{OsSe}); in fact, as the unit normal  of the level set $\{u(x) = k\}$ is given formally by $n(x) = Du/|Du|$, then  the mean curvature of this surface at the point $x$ is formally given by $$H(x) = {\rm div}(n)(x) = {\rm div}(Du/|Du|)(x)\,;$$   this relationship clearly expresses  that the behavior at the boundary $\partial\Omega$ of  solutions to problems as in \eqref{pb1lapintro} may depend on the geometry of the boundary.

  Equations with dependence on the gradient  also enter in geometric problems as the one proposed  in \cite{HL} in the study of the inverse mean curvature flow (see also \cite{mazonsegura}).  \bk 
We refer the interested reader to the monograph \cite{ACM} for a more complete review on applications.

\medskip

The case of a possibly singular nonlinearity $h$  in \eqref{pb1lapintro} with   $g=0$ has been studied, in the case of a $p$-laplace leading term  with $p>1$, in connection with the analysis of  flows of  non-Newtonian fluid as the  pseudoplastic ones; these kinds of equations appear in particular in geophysical phenomena (e.g. glacial advance) as well as in industrial applications as extrusion in polymers or metals.  We refer to \cite[Section 3]{DG} for a detailed derivation of the model in the case $p=2$. The mathematical literature in this case is  massive; without the aim to be complete we refer the reader to \cite{crt, LM,BO,OP1,OP, GMM,GMM2,DCA,O} and references therein. 

\medskip

From the purely mathematical point of view, the case $p=1$ is faced in \cite{CT,MST1, MST2} in the autonomous case by approximating  solutions to 
\begin{equation}\label{remar}\begin{cases}
					 \displaystyle - \Delta_{1} v = f &  \text{in}\, \Omega, \\
					 v=0 & \text{on}\ \partial \Omega\,,		
				 \end{cases}
\end{equation}
with solutions $v_p$ to the associated $p$-laplacian problems with $p>1$.  This procedure presents remarkable features;  first of all a degeneracy appears in the approximation argument if the datum is too small, say $\|f\|_{N}<{\mathcal{S}_{1}}^{-1}$,  $\mathcal{S}_{1}$ being the best Sobolev constant in $W^{1,1} (\R^N)$; in this case,  $ v_{p}\to 0$ a.e. on $ \Omega$. On the other hand the approximating solutions $v_p$ may blow up on a set of positive Lebesgue  measure if $f$ does not belong to $L^N(\Omega)$. 

Concerning the presence of a possibly singular $h$, in  \cite{DGOP} existence and  regularity of a nonnegative  (nontrivial, in general)  distributional solutions to the homogeneous Dirichlet problem  
\begin{equation}\label{introapp}\begin{cases}
					 \displaystyle - \Delta_{1} v = h(v)f &  \text{in}\, \Omega, \\
					 v=0 & \text{on}\ \partial \Omega\,,		
				 \end{cases}
\end{equation}
is obtained for a   nonnegative datum $f$ in $L^{N}(\Omega)$ with suitable small norm.  Uniqueness of solutions is also derived  provided  $h$ is decreasing and $f>0$. 
\bk

\medskip

The situation significantly changes when one looks at the case $g\not= 0$, that is the case of a gradient term that depends on the solution itself with natural growth. If $p>1$ and  $h\equiv 1$, then problems as  
\begin{equation}\label{hu1}
			 \begin{cases}
					 \displaystyle -\Delta_p w = g(w)|\nabla w|^p+ f &  \text{in}\, \Omega, \\
					 w=0 & \text{on}\ \partial \Omega\,;		
				 \end{cases}
\end{equation}
as regards  the non-singular case (i.e. with a bounded continuous $g$) one can refer to \cite{bmp,bmp2, fm, gt, ps} for a companion on the subject. The possibly singular case have been  largely investigated both in the absorption and in the reaction case. If  $p=2$ and  $g(s)\sim s^{-\theta}$ one may refer to \cite{b1,a6, ABLP, gps, GPS2} and references therein, while the case $p>1$ has also been considered (\cite{zw, ww, dop}). Observe that, in any cases, the threshold $\theta=1$ is shown to be critical in order to get global  finite energy solutions for a general nonnegative datum $f$ (see also the discussion in \cite{d}).

The case $p=1$ of problem  \eqref{hu1} has been recently faced  mostly in presence of an absorption term; in \cite{mazonsegura, fp} ($g\equiv -1$) and \cite{ls} (bounded negative $g$). The reaction case  is studied in \cite{ads} for $g\equiv 1$ and in presence of zero order absorption term (see also \cite{ds2}). 

\bk

\medskip

In this paper we extend the previous results in many directions; under very general assumptions on the data, we show optimal existence and regularity results for solutions of problem \eqref{pb1lapintro}. As predictable, the goal will be accomplished by mean of an approximation argument with $p$-laplace type problems
\begin{equation}\label{pp}
				 \begin{cases}
					 \displaystyle -\Delta_p u_p = g_p(u_p)|\nabla u_p|^p+ h_p(u_p)f &  \text{in}\, \Omega, \\
					 u_p=0 & \text{on}\ \partial \Omega,			
				 \end{cases}
\end{equation}
where $p>1$ and $g_p$ and $h_p$ are suitable truncations of respectively  $g$ and $h$. Our results  wholly agree with the existing literature and, as proper examples will show, they  are sharp in the sense outlined later on.  For instance, the sharp smallness assumption of \cite{DGOP} for problem \eqref{introapp} is recovered,  as well as the results of \cite{CT, MST1} for \eqref{remar}. Furthermore, if $g=h\equiv 1$  in \eqref{pb1lapintro} one also recovers the existing results  (see, for instance, \cite{ds2} and references therein).  

\medskip 
One of the main difficulties, of course, will rely on carefully keeping track of the involved constants in order to get  (sharp)  a priori estimates that  do not depend on the parameter $p$. Here is where a smallness  assumption on the data will be needed. A crucial point regards the proof that the candidate solution  $u$ is bounded and it shall be achieved by mean of a comparison with  suitable approximating solutions of problem \eqref{introapp}. 
Then, after that,  in order to pass to the limit in  \eqref{pp}  and to show that the candidate $u$ is actually a solution to \eqref{pb1lapintro},  a suitable chain rule formula will be established.   A key ingredient in order to conclude will rely on the proof that the jump part of the derivative of  $u$ is zero; this peculiar phenomenon is  due to the presence of the gradient term in the equations and it does not occur in the case $g=0$.  
\bk

A further drawback that has to be dealt with  concerns the way the boundary datum is assumed.  In the non-singular case this is quite clear nowadays and, as we already mentioned, a weak boundary requirement is needed as no point-wise behavior can be prescribed; here,  due to the presence of the possibly singular nonlinearity, most of the estimates one shall find are only local and one needs to further clarify the notion of the homogeneous  boundary datum. A general result on vector fields whose divergence is a nonnegative  (local)  Radon measure will be used for this purpose. 

\medskip

One of our concurrent  purposes consists in showing how the two nonlinearities interact  with each other and how they sort-of regularize the problem. We already mentioned how the presence of the nonlinear gradient term gives rise to a solution without jump part.  Moreover,  the term $g(u)|Du|$ is, a priori, only a (locally) bounded measure; despite this,  the approximation argument  do find a solution which is finite a.e. on $\Omega$ in contrast with the blow up behavior of the solutions $v_p$ approximating \eqref{remar}. 

Another regularizing effect appears if $h(0)=\infty$ as no degeneracy of the approximating sequence $v_p$ is produced;  in some sense the behavior of $h$ near zero compensates the possibly small norm of $f$ in such a way to return a positive limit solution $u$.

\medskip \medskip

The plan of the paper is the following:  in Section \ref{due} after  providing some  basic notations on $BV$ spaces,  we set the  Anzellotti-Chen-Frid type theory of vector fields with measure-valued divergence we use; in particular, we prove a general property on those vector fields whose divergence is a signed measure (Lemma \ref{lemmal1}).  We also prove a useful generalized Chain rule formula (Lemma \ref{chainrule}). For the sake of exposition Section \ref{sec3} will be devoted to the case of a positive datum $f$ and a subcritical nonlinearity $g$ (i.e. $g(s)\sim s^{-\theta}$, with $\theta<1$). Here the approximation scheme is introduced and some properties of an  auxiliary problem (namely problem \eqref{pp} with $g=0$) is presented.  This paves the way to the proofs of the basic a priori estimates  we need and of the boundedness of the candidate solution $u$.  In Section \ref{3.3} we show that $u$ has no jump part and (Section \ref{3.4}) we pass to the limit in \eqref{pp}. In Section \ref{strongly} we extend the result to the critical case $\theta =1$,  while Section \ref{nonnegative} is focused on the case of a general nonnegative datum $f$. Finally Section \ref{rae} is devoted to discuss some further remarks and examples. We first show how the case $p=1$ and $\theta=1$ behaves differently with respect to  the case $p>1$ showing an intriguing breaking of the nonexistence threshold. Two examples are then built in order to show how the geometry of $\Omega$ influences the behavior of the solutions of our problem; in particular, at least in some model cases, constant solutions are shown to exist if $\Omega$ is {calibrable} while the existence of non-constant solutions is proven once the {variational mean curvature} of $\Omega$ is not bounded.  

\subsection*{Notation}

We denote by $\mathcal H^{N-1}(E)$ the $(N - 1)$-dimensional Hausdorff measure of a set $E$ while $|E|$ stands for its $N$-dimensional  Lebesgue measure. 
 $\mathcal{M}(\Omega)$ is the usual space of Radon measures with finite total variation over $\Omega$. The space $\mathcal{M}_{\rm loc}(\Omega)$ is the space of Radon measure which are locally finite in $\Omega$. We refer to a Lebesgue space with respect to a Radon measure $\mu$ as $L^q(\Omega,\mu)$.  We denote by $\mathcal{S}_1$ the best constant in the Sobolev inequality:  i.e.
$$||v||_{L^{\frac{N}{N-1}}(\Omega)} \le \mathcal{S}_1 ||v||_{W^{1,1}_0(\Omega)}, \ \ \forall v\in W^{1,1}_0(\Omega).$$

\medskip 

 We denote by $\chi_{E}$ the characteristic function of a set $E$. For a fixed $k>0$, we use the truncation functions $T_{k}:\R\to\R$ and $G_{k}:\R\to\R$ defined by
\begin{align*}
T_k(s):=&\max (-k,\min (s,k))\ \ \text{\rm and} \ \ G_k(s):=s- T_k(s).
\end{align*}
 \begin{figure}[htbp]\centering
\includegraphics[width=2in]{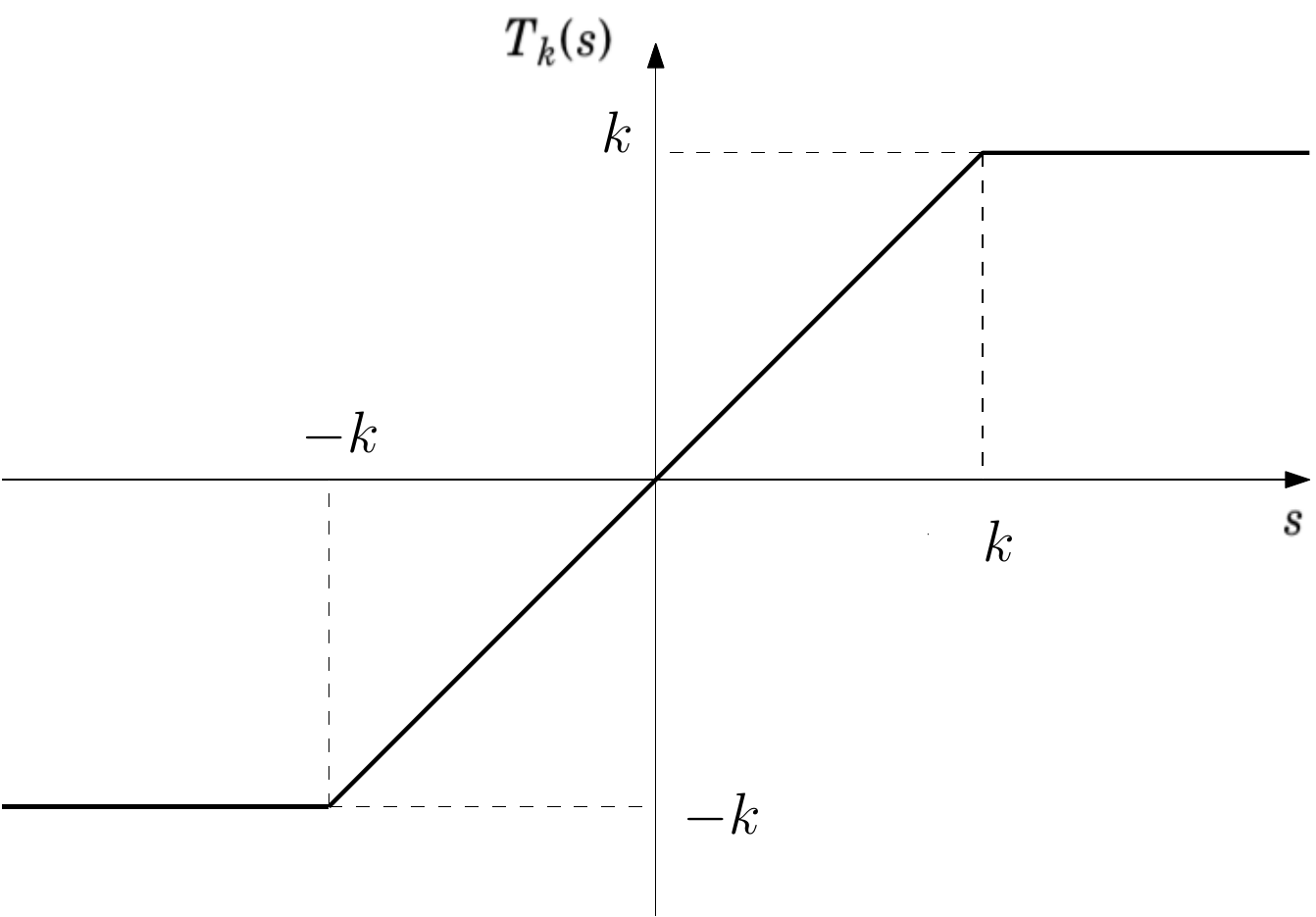} \ \ \ \includegraphics[width=2in]{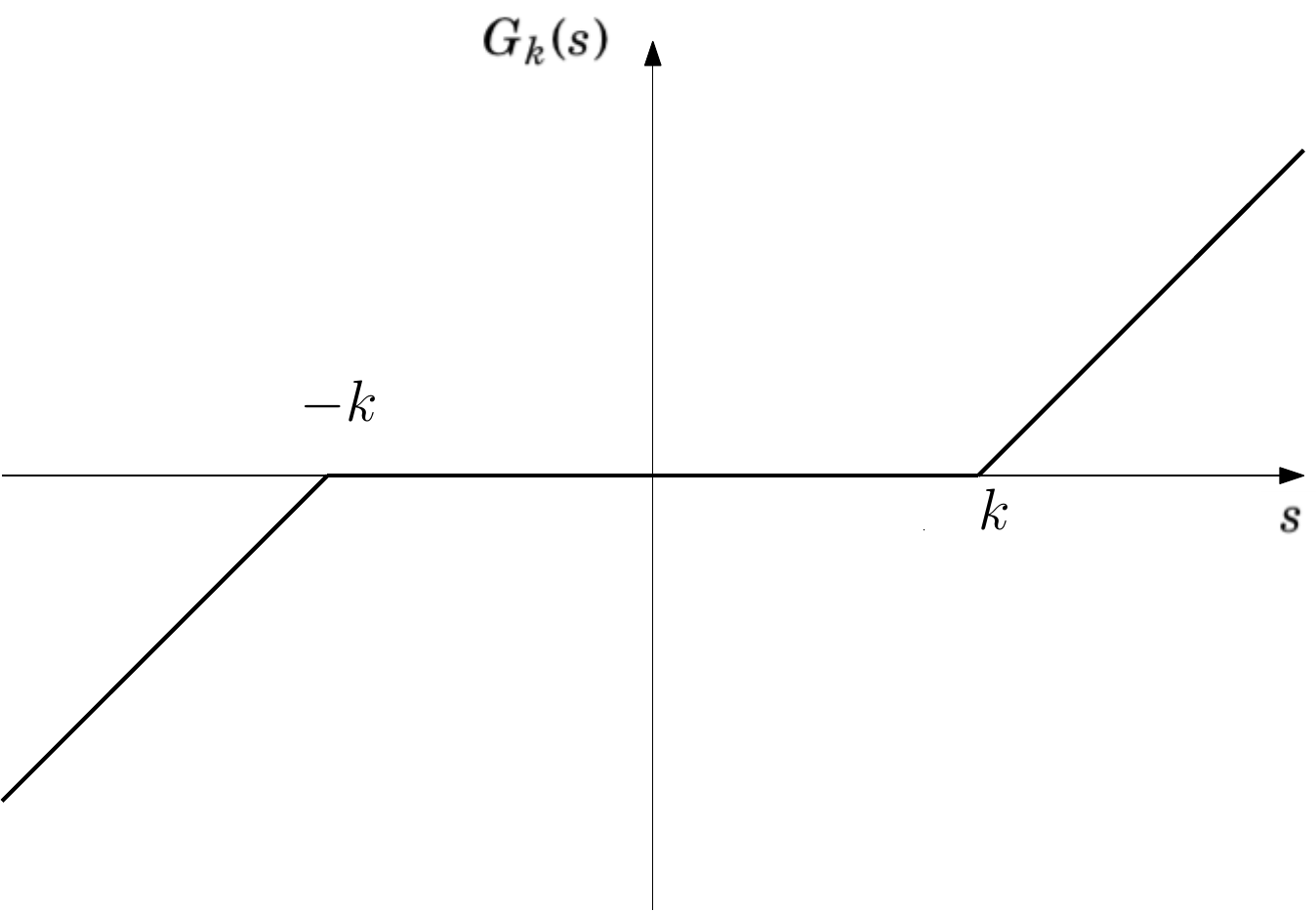}
\caption{$T_k(s)$ and $G_k(s)$}
\end{figure}

We also use the following auxiliary function defined for nonnegative values
\begin{align}\label{Vdelta}
\displaystyle
V_{\delta}(s):=
\begin{cases}
1 \ \ &  0\le s\le \delta, \\
\displaystyle\frac{2\delta-s}{\delta} \ \ &\delta <s< 2\delta, \\
0 \ \ &s\ge 2\delta.
\end{cases}
\end{align}
 \begin{figure}[htbp]\centering
\includegraphics[width=2in]{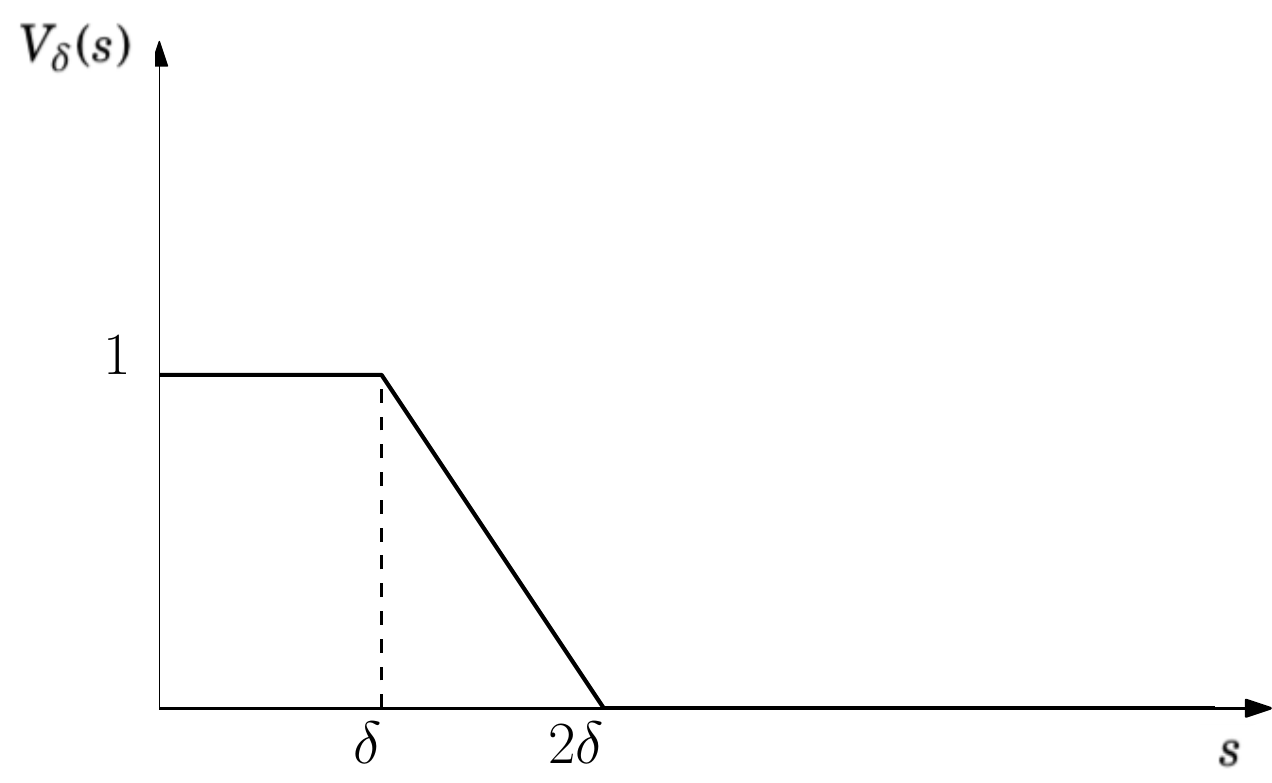}
\caption{$V_\delta (s)$}
\end{figure}

\medskip 

If no otherwise specified, we will denote by $C$ several positive constants whose value may change from line to line and, sometimes, on the same line. These values will only depend on the data but they will never depend on the indexes of the sequences we will gradually introduce.

Finally for simplicity's sake, and if there is no ambiguity,  we will often use the following notation for the Lebesgue integral of a funciton $f$ 
$$
 \int_\Omega f:=\int_\Omega f(x)\ dx\,,
$$
or

$$
 \int_\Omega f\mu:=\int_\Omega f(x)\ d\mu \,,
$$
to indicate the integration of a function $f$ with respect to a measure $\mu$. 
\section{Preliminaries}  \label{due}
\subsection{Basics on $BV$ spaces} 
We briefly review some basic facts on  $BV$ spaces;  we refer to \cite{afp} for a complete account  and for further standard notations not included here for the sake of brevity. 
Let $\Omega$ be an open bounded subset of $\R^N$ ($N\ge 1$) with Lipschitz boundary. The space of functions with bounded variation on $\Omega$ is defined as follows:  
$$BV(\Omega):=\{ u\in L^1(\Omega) : Du \in \mathcal{M}(\Omega)^N \}.$$ 
We underline that the $BV(\Omega)$ space endowed with the norm  
$$ ||u||_{BV(\Omega)}=\int_\Omega |u|\, + \int_\Omega|Du|\,,$$
or with
$$\displaystyle ||u||_{BV(\Omega)}=\int_{\partial\Omega}
|u|\, d\mathcal H^{N-1}+ \int_\Omega|Du|,$$
is a Banach space. We denote by $BV_{\rm loc}(\Omega)$ the space of functions in $BV(\omega)$ for every open set $\omega \subset\subset\Omega$. With $L_u$ we denote the set of Lebesgue point of a function $u$, with $S_u =\Omega\setminus L_u$ and  with $J_u$ the  jump set. It is well known that any function $u\in BV(\Omega)$ can be identified with its precise representative $u^*$ which is the Lebesgue representative in $L_u$ while $u^*=\frac{u^++u^-}{2}$ in $J_u$ where $u^+,u^-$ are the approximate limits of $u$. Moreover it can be shown that 
$\mathcal{H}^{N-1}(S_u\setminus J_u)=0$ and that $u^*$ is well defined $\mathcal{H}^{N-1}$-a.e.

\subsection{The Anzellotti-Chen-Frid theory.} In order to be self-contained we  summarize the $L^\infty$-divergence-measure vector fields theory due to \cite{An} and \cite{CF}. We denote by 
$$\DM(\Omega):=\{ z\in L^\infty(\Omega)^N : \operatorname{div}z \in \mathcal{M}(\Omega) \},$$
and by $\DM_{\rm loc}(\Omega)$ its local version, namely the space of bounded vector field $z$ with $\operatorname{div}z \in \mathcal{M}_{\rm loc}(\Omega)$.
We first recall that if $z\in \DM(\Omega)$ then $\operatorname{div}z $ is an absolutely continuous measure with respect to $\mathcal H^{N-1}$. 
\\In \cite{An} the following distribution $(z,Dv): C^1_c(\Omega)\to \mathbb{R}$ is considered: 
\begin{equation}\label{dist1}
\langle(z,Dv),\varphi\rangle:=-\int_\Omega v^*\varphi\operatorname{div}z-\int_\Omega
vz\cdot\nabla\varphi,\quad \varphi\in C_c^1(\Omega).
\end{equation}
In \cite{MST2} and \cite{C} the authors prove that $(z, Dv)$ is well defined if $z\in \DM(\Omega)$ and $v\in BV(\Omega)\cap L^\infty(\Omega)$ since one can show that $v^*\in L^\infty(\Omega,\operatorname{div}z)$. Moreover in \cite{dgs} the authors show that \eqref{dist1} is well posed if $z\in \DM_{\rm loc}(\Omega)$ and $v\in BV_{\rm loc}(\Omega)\cap L^1_{\rm loc}(\Omega, \operatorname{div}z)$ and it holds that
\begin{equation*}\label{finitetotal}
|\langle   (z, Dv), \varphi\rangle| \le ||\varphi||_{L^{\infty}(U) } ||z||_{L^\infty(U)^N} \int_{U} |Dv|\,,
\end{equation*}
for all open set $U \subset\subset \Omega$ and for all $\varphi\in C_c^1(U)$. Moreover one has
\begin{equation*}\label{finitetotal1}
\left| \int_B (z, Dv) \right|  \le  \int_B \left|(z, Dv)\right| \le  ||z||_{L^\infty(U)^N} \int_{B} |Dv|\,,
\end{equation*}
for all Borel sets $B$ and for all open sets $U$ such that $B\subset U \subset \Omega$.

Observe that,  if $z\in \DM_{\rm loc}(\Omega)$ and $w\in BV_{\rm loc}(\Omega)\cap L^\infty(\Omega)$, then
$$
\diver (wz)=(z,Dw) + w^*\diver z\,,
$$
so that $wz \in \DM_{\rm loc}(\Omega)$. This fact allows us to check the following technical  result that will be useful in the sequel and that contains a particular instance of \cite[Lemma 2.6]{gmp}

\begin{lemma}\label{l-ms}
  Let $z\in \DM_{\rm loc}(\Omega)$ and let $ u,v\in BV_{\rm loc} (\Omega)\cap L^\infty(\Omega) $. Then 
  \begin{equation}
  \label{f1}
  (z,D(uv))=u(z,Dv)+(vz,Du). 
  \end{equation}
\end{lemma}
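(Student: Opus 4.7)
The idea is to compute $\operatorname{div}(uvz)$ in two different ways using twice the Leibniz-type rule
\begin{equation*}
\operatorname{div}(wz) = (z,Dw) + w^*\operatorname{div} z,
\end{equation*}
recalled just before the statement of the Lemma, and then to equate the two expressions.

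First I would regard $w = uv$ as a single element of $BV_{\rm loc}(\Omega)\cap L^\infty(\Omega)$ (the space is clearly an algebra under pointwise multiplication, with the product rule for the distributional gradient). Applying the Leibniz rule to $z$ and $uv$ directly yields
\begin{equation*}
\operatorname{div}((uv)z) = (z, D(uv)) + (uv)^*\,\operatorname{div} z.
\end{equation*}
Next, using that $vz \in \DM_{\rm loc}(\Omega)$ (precisely the content of the remark preceding the Lemma), I would apply the same rule a second time, now to $u \in BV_{\rm loc}\cap L^\infty$ and the vector field $vz$, obtaining
\begin{equation*}
\operatorname{div}(u(vz)) = (vz, Du) + u^*\operatorname{div}(vz) = (vz, Du) + u^*(z,Dv) + u^* v^*\operatorname{div} z.
\end{equation*}
Since $(uv)z = u(vz)$ as bounded vector fields, the two left-hand sides coincide as measures on $\Omega$.

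Equating the right-hand sides, I would then read the identity
\begin{equation*}
(z, D(uv)) = u^*(z,Dv) + (vz, Du) + \bigl(u^*v^* - (uv)^*\bigr)\operatorname{div} z
\end{equation*}
as an equality of Radon measures. The key point is now that $\operatorname{div} z$ is absolutely continuous with respect to $\mathcal{H}^{N-1}$ (this was recalled at the beginning of the subsection), and that for $u,v\in BV_{\rm loc}\cap L^\infty$ the precise representatives satisfy $(uv)^* = u^*v^*$ outside an $\mathcal{H}^{N-1}$-negligible set (they coincide at all points which are Lebesgue points for both $u$ and $v$, and $\mathcal{H}^{N-1}(S_u\cup S_v)$-negligibly elsewhere, using that $u,v$ are bounded). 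Hence the last term in the displayed equality vanishes and \eqref{f1} follows, with the standard convention that $u$ on the right-hand side stands for its precise representative $u^*$ (which is the natural way to read the multiplication of $u$ against the Radon measure $(z,Dv)$, absolutely continuous with respect to $\mathcal{H}^{N-1}$).

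The only genuinely delicate point is the identification $(uv)^* = u^*v^*$ modulo $\mathcal{H}^{N-1}$; everything else is a purely algebraic manipulation of two instances of the product rule for $\DM_{\rm loc}$-fields. This is why I expect the main (though minor) obstacle to be a clean handling of the jump sets of $u$ and $v$, which is resolved precisely by invoking the absolute continuity of $\operatorname{div} z$ with respect to $\mathcal{H}^{N-1}$.
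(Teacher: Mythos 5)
Your argument is the same as the paper's: the paper also writes $(z,D(uv))=-(uv)\operatorname{div}z+\operatorname{div}(uvz)$ and expands $\operatorname{div}(uvz)=\operatorname{div}(u(vz))$ by a second application of the Leibniz rule $\operatorname{div}(wz)=(z,Dw)+w^*\operatorname{div}z$; this is exactly your two computations of $\operatorname{div}(uvz)$ rearranged, and you are if anything more explicit about where the precise representatives enter.

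The one point you single out as delicate is, however, not justified correctly, and it is worth recording why. The identity $(uv)^*=u^*v^*$ does hold $\mathcal H^{N-1}$-a.e.\ on $(L_u\cap L_v)\cup(J_u\cap L_v)\cup(L_u\cap J_v)$, but on $J_u\cap J_v$ (which need not be $\mathcal H^{N-1}$-negligible) one has $(uv)^*=\tfrac12(u^+v^++u^-v^-)=u^*v^*+\tfrac14(u^+-u^-)(v^+-v^-)$. Absolute continuity of $\operatorname{div}z$ with respect to $\mathcal H^{N-1}$ does not rescue this, because $\operatorname{div}z$ may still charge an $(N-1)$-rectifiable set: in dimension one, $z=\sgn(x)$ on $(-1,1)$ has $\operatorname{div}z=2\delta_0$, and taking $u=v=\chi_{(0,1)}$ one computes $(z,D(uv))=0$ while $u^*(z,Dv)+(vz,Du)=\tfrac12\delta_0$, so \eqref{f1} genuinely fails there. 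What your manipulation (and the paper's) actually proves is \eqref{f1} up to the error measure $\bigl(u^*v^*-(uv)^*\bigr)\operatorname{div}z$, supported on $J_u\cap J_v$; this vanishes whenever $|\operatorname{div}z|(J_u\cap J_v)=0$, in particular when one of the two functions has no jump part, which is the only situation in which the lemma is used in the paper. Since the paper's proof makes the same silent identification ($-uv\operatorname{div}z=-u(v\operatorname{div}z)$ with the stars suppressed), you have reproduced the intended argument faithfully and correctly located its weak point; only the reason you give for dismissing that point is not valid in the stated generality.
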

\begin{proof} A reiterated application of  \eqref{dist1} gives that, as measures
\begin{eqnarray*}
(z,D(uv)) & = & -uv\operatorname{div} z+\operatorname{div}(uvz)
\\ &=& -u(\operatorname{div}(vz)-(z,Dv)) + u\operatorname{div}(vz) + (vz,Du)
\\ &=& u(z,Dv) + (vz,Du)
\end{eqnarray*}
and \eqref{f1} is proven.
\end{proof}

\medskip

We recall that in \cite{An} it is proved that every $z \in \mathcal{DM}^{\infty}(\Omega)$ possesses a weak trace on $\partial \Omega$ of its normal component which is denoted by
$[z, \nu]$, where $\nu(x)$ is the outward normal unit vector defined for $\mathcal H^{N-1}$-almost every $x\in\partial\Omega$. Moreover, it holds
\begin{equation*}\label{des1}
||[z,\nu]||_{L^\infty(\partial\Omega)}\le ||z||_{L^\infty(\Omega)^N},
\end{equation*}
and it also satisfies that if $z \in \mathcal{DM}^{\infty}(\Omega)$ and $v\in BV(\Omega)\cap L^\infty(\Omega)$, then
\begin{equation*}\label{des2}
v[z,\nu]=[vz,\nu],
\end{equation*}
(see \cite{C}).\\
Finally we will also use the following Green formula due to \cite{dgs}, the authors prove that if $z\in \DM_{\rm loc}(\Omega)$ and $v\in BV(\Omega)\cap L^\infty(\Omega)$ such that $v^*\in L^1(\Omega,\operatorname{div}z)$ then $vz\in \DM(\Omega)$ and a weak trace can be defined as well as the following Green formula:
\begin{lemma}
	Let $z\in \DM_{\rm}(\Omega)$ and let $v\in BV(\Omega)\cap L^\infty(\Omega)$ then it holds
	\begin{equation}\label{green}
	\int_{\Omega} v^* \operatorname{div}z + \int_{\Omega} (z, Dv) = \int_{\partial \Omega} [vz, \nu] \ d\mathcal H^{N-1}.
	\end{equation}	
\end{lemma}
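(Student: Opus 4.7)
The plan is to reduce the identity to the classical Anzellotti total-mass Green formula by upgrading $vz$ from the local class $\DM_{\rm loc}(\Omega)$ to the global class $\DM(\Omega)$, and then testing the resulting boundary trace identity against the constant function $1$.

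First, I would invoke the product rule for measure-valued divergences recalled immediately before Lemma \ref{l-ms}, which reads, as measures on $\Omega$,
\begin{equation*}
\operatorname{div}(vz) = (z,Dv) + v^*\operatorname{div} z.
\end{equation*}
The key step is then to verify that both summands on the right are \emph{finite} Radon measures on $\Omega$. Since $v\in BV(\Omega)$, one has $|Dv|(\Omega)<\infty$, and by the localized estimate $\bigl|\int_B(z,Dv)\bigr|\le \|z\|_{L^\infty}|Dv|(B)$ recalled earlier in this section, $(z,Dv)$ has total variation at most $\|z\|_{L^\infty(\Omega)^N}|Dv|(\Omega)$. For the second summand, either by the hypothesis $v^*\in L^1(\Omega,\operatorname{div} z)$ discussed in the paragraph preceding the statement, or, when $z\in\DM(\Omega)$, automatically because $v^*\in L^\infty$ and $\operatorname{div} z$ is finite, $v^*\operatorname{div} z$ is finite as well. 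Hence $vz\in\DM(\Omega)$ and its weak normal trace $[vz,\nu]\in L^\infty(\partial\Omega)$ is well defined.

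Applying the classical Anzellotti total-mass identity (obtained from the distributional definition \eqref{dist1} by approximating the constant $1$ with a sequence of cut-offs $\varphi_n\in C^1_c(\Omega)$ with $\varphi_n\uparrow 1$, and invoking dominated convergence against the finite measure $\operatorname{div}(vz)$) to $w=vz$ then yields
\begin{equation*}
\int_\Omega \operatorname{div}(vz) = \int_{\partial\Omega}[vz,\nu]\,d\mathcal H^{N-1}.
\end{equation*}
Substituting the product-rule expression for $\operatorname{div}(vz)$ on the left gives exactly \eqref{green}.

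The main obstacle is precisely the promotion of $vz$ from $\DM_{\rm loc}$ to $\DM(\Omega)$: once global finiteness of both pieces of $\operatorname{div}(vz)$ is secured, the rest is a direct invocation of the Anzellotti machinery and the approximation arguments standard in this framework.
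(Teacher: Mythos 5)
Your proof is correct and follows essentially the same route as the paper, which states this lemma without proof as a consequence of \cite{dgs}: there too one promotes $vz$ to $\DM(\Omega)$ via the product rule $\operatorname{div}(vz)=(z,Dv)+v^*\operatorname{div}z$ and then applies the total-mass Gauss--Green identity $\int_\Omega\operatorname{div}w=\int_{\partial\Omega}[w,\nu]\,d\mathcal H^{N-1}$ to $w=vz$. Under the stated global hypotheses both summands are finite exactly as you verify (for $z\in\DM(\Omega)$ one has $v^*\in L^\infty(\Omega,\operatorname{div}z)$ automatically), so nothing is missing; the one caveat is that the total-mass identity is itself part of the cited Anzellotti theory rather than a consequence of the naive cutoff limit alone --- that limit identifies $\int_\Omega\operatorname{div}(vz)$ but not its equality with the boundary integral of the normal trace --- and you correctly defer to that machinery.
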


\medskip

We also have the following result that extends \cite[Lemma 5.3]{DGOP} to the measure case and that has its own interest. 
	\begin{lemma}\label{lemmal1}
	Let $0\le \mu \in \mathcal{M}_{\rm loc}(\Omega)$  and let $z\in \mathcal{D}\mathcal{M}^\infty_{\rm{loc}}(\Omega)$  such that 
	\begin{equation}\label{lemma_distr*}
	-\operatorname{div}z = \mu \ \text{    in    } \mathcal{D'}(\Omega),
	\end{equation}
	then 
$$
	\mu \in \mathcal{M}(\Omega).
$$
\end{lemma}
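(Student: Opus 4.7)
The plan is to prove $\mu(\Omega) < \infty$ by testing the distributional identity $-\operatorname{div} z = \mu$ against a sequence of smooth cutoffs that approximate $\chi_\Omega$ from below, exploiting both the global $L^\infty$-bound on $z$ and the Lipschitz regularity of $\partial\Omega$ (assumed throughout the paper).

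First I would construct the test functions. For each positive integer $n$, set
$$\psi_n(x) := \min\bigl(n\,\operatorname{dist}(x,\partial\Omega),\, 1\bigr), \qquad x\in\Omega,$$
and let $\varphi_n := \psi_n * \rho_{\varepsilon_n} \in C_c^\infty(\Omega)$ be a standard mollification with $\varepsilon_n$ small enough that $\operatorname{supp}\varphi_n \subset\subset \Omega$. Then $0 \le \varphi_n \le 1$, $\varphi_n \nearrow 1$ pointwise in $\Omega$, and $|\nabla \psi_n|$ is supported in the strip $\{0 < \operatorname{dist}(\cdot,\partial\Omega) < 1/n\}$ with modulus bounded by $n$. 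Because $\partial\Omega$ is Lipschitz, the Minkowski content estimate
$$\bigl|\{x\in\Omega : \operatorname{dist}(x,\partial\Omega) < 1/n\}\bigr| \le C/n$$
combined with the coarea formula yields the uniform bound $\int_\Omega |\nabla\varphi_n|\,dx \le C$ for some constant $C$ depending only on $\Omega$ (asymptotically $\mathcal{H}^{N-1}(\partial\Omega)$).

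Next, since $\varphi_n \in C_c^\infty(\Omega)$, pairing both sides of $-\operatorname{div} z = \mu$ with $\varphi_n$ gives
$$\int_\Omega \varphi_n \, d\mu \;=\; \int_\Omega z\cdot \nabla\varphi_n \, dx \;\le\; \|z\|_{L^\infty(\Omega)^N} \int_\Omega |\nabla\varphi_n|\,dx \;\le\; C\,\|z\|_{L^\infty(\Omega)^N}.$$
Since $\mu \ge 0$ and $\varphi_n \nearrow 1$ pointwise on $\Omega$, monotone convergence then gives
$$\mu(\Omega) = \lim_n \int_\Omega \varphi_n\,d\mu \le C\,\|z\|_{L^\infty(\Omega)^N} < \infty,$$
so that $\mu \in \mathcal{M}(\Omega)$ as required.

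The main technical point is the uniform $L^1$-gradient bound on the cutoffs $\varphi_n$, and this is exactly where the Lipschitz character of $\partial\Omega$ enters essentially; any weaker regularity of the boundary would force a more delicate construction. It is also worth emphasizing that the sign condition $\mu \ge 0$ is crucial in the final passage: it is precisely what allows monotone convergence to convert the estimate on $\int \varphi_n\,d\mu$ into a genuine bound on the total mass $\mu(\Omega) = |\mu|(\Omega)$; for signed measures the above scheme would only bound the integrals against the $\varphi_n$, not the total variation.
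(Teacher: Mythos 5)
Your argument is correct and reaches the same estimate, but by a more explicit route than the paper. The paper first upgrades the distributional identity to the inequality $\int_\Omega v\,d\mu\le\int_\Omega z\cdot\nabla v$ for nonnegative $v\in W^{1,1}_0(\Omega)\cap C(\Omega)$, and then invokes Gagliardo's trace lemma (as in Anzellotti) to produce functions $w_n\in W^{1,1}(\Omega)\cap C(\Omega)$ with trace $\tilde v|_{\partial\Omega}$ and $\int_\Omega|\nabla w_n|\le\int_{\partial\Omega}\tilde v\,d\mathcal H^{N-1}+\tfrac1n$, testing with $|\tilde v-w_n|$ and taking $\tilde v\equiv1$ at the end; this yields the sharp bound $\mu(\Omega)\le\|z\|_{L^\infty(\Omega)^N}\,\mathcal H^{N-1}(\partial\Omega)$ and, as a by-product, the estimate for arbitrary $\tilde v\in W^{1,1}(\Omega)\cap C(\Omega)$. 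You instead build the cutoffs by hand from the distance function and control $\int_\Omega|\nabla\varphi_n|$ via the finite (one-sided) Minkowski content of a Lipschitz boundary; this is more elementary and makes the role of the boundary regularity completely transparent, at the price of being tied to the special test function $\tilde v\equiv1$. Two small points to tidy up: as written, $\psi_n=\min(n\,\mathrm{dist}(\cdot,\partial\Omega),1)$ is strictly positive on all of $\Omega$, so no mollification of its zero extension can have support compactly contained in $\Omega$; you should shift the cutoff inward, e.g.\ take $\psi_n=\min\bigl(\max(n\,\mathrm{dist}(\cdot,\partial\Omega)-1,0),1\bigr)$, which vanishes on the strip $\{\mathrm{dist}<1/n\}$ and then mollify with $\varepsilon_n<1/(2n)$. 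Also, after mollification the sequence need not be monotone, but since $\mu\ge0$ and $\varphi_n\to1$ pointwise, Fatou's lemma serves just as well as monotone convergence in the final passage. Your closing remark on the necessity of the sign condition is accurate.
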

\begin{proof}
	Let $0\le v\in W^{1,1}_0(\Omega)\cap C(\Omega)$ and let $\varphi_n\in C^{1}_c(\Omega)$ be a sequence of nonnegative functions converging to $v$ in $W^{1,1}_0(\Omega)$. Let us take $\varphi_n$ as test function in \eqref{lemma_distr*} 
	\begin{equation*}\label{contest}
	\int_\Omega z \cdot \nabla \varphi_n = \int_\Omega  \varphi_n \mu
	\end{equation*}
	and we take $n\to \infty$ by applying the Fatou Lemma on the right hand side of the previous. Hence one obtains 
	\begin{equation}\label{contestbis}
	\int_\Omega  v \mu\le \int_\Omega z \cdot \nabla v.
	\end{equation}	
	Now we take $\tilde{v} \in W^{1,1}(\Omega)\cap C(\Omega)$ and then from a Gagliardo Lemma (see \cite[Lemma 5.5]{An}) there exists $w_n\in
	W^{1, 1}(\Omega)\cap C(\Omega)$ having $|w_n|_{\partial\Omega}=|\tilde{v}|_{\partial\Omega}$, $\displaystyle\int_\Omega|\nabla
	w_n|\le\displaystyle\int_{\partial\Omega}\tilde{v}\,d\mathcal H^{N-1}+\frac1n\,$ and such that $w_n$ tends to $0$ almost everywhere in $\Omega$.
	Now take $|v-w_n|\in W_0^{1,1}(\Omega)\cap C(\Omega)$ as a test function in \eqref{contestbis}, yielding to
	\begin{align*}
	\int_\Omega |\tilde{v}-w_n|\mu &\le  \int_\Omega z\cdot\nabla|\tilde{v}-w_n| \le ||z||_{L^\infty(\Omega)^N}\left(\int_\Omega|\nabla \tilde{v}| + \int_\Omega|\nabla w_n| \right)
	\\
	&\le  ||z||_{L^\infty(\Omega)^N}\left(\int_\Omega|\nabla \tilde{v}|+\int_{\partial\Omega}\tilde{v}\,d\mathcal H^{N-1}+\frac1n\right),
	\end{align*}
	and the Fatou Lemma with respect to $n$ gives
	\begin{equation*}
	\int_\Omega \tilde{v}\mu\le  ||z||_{L^\infty(\Omega)^N}\left(\int_\Omega|\nabla \tilde{v}|+\int_{\partial\Omega}\tilde{v}\,d\mathcal H^{N-1}\right),
	\end{equation*}
	where, taking $\tilde{v}\equiv 1$, one deduces that $\mu$ belongs to $\mathcal{M}(\Omega)$. 
\end{proof}

\medskip

\subsection{A chain rule formula.} We will also use this type of chain rule formula which is an extension of the classical one (see   \cite[Theorem 3.99]{afp}) for functions having unbounded derivative at zero. The following lemma also extends  \cite[Theorem 3.10]{ls}. 

\begin{lemma}\label{chainrule}
	Let $0\le u\in BV_{\rm loc}(\Omega)$ with $D^j u=0$ and let $g:[0,\infty)\mapsto (0,\infty]$ be a continuous function finite outside $s=0$. Then define $\Gamma:[0,\infty)\mapsto [-\infty,+\infty)$ such that $\Gamma'(s)=g(s)$ and suppose that $v=\Gamma(u)\in BV_{\rm loc}(\Omega)$. Then $g(u)\chi_{\{u>0\}}|Du|$ is a locally finite measure and it holds 
	\begin{equation}\label{fcl}
	\chi_{\{u>0\}} |Dv|=g(u)\chi_{\{u>0\}}|Du| \text{ as measures in } \Omega\,.
	\end{equation}
\end{lemma}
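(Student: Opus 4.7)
The plan is to reduce the claim to the classical Lipschitz chain rule in $BV$ (\cite[Theorem 3.99]{afp}, or its singular extension \cite[Theorem 3.10]{ls}) by cutting the singularity of $g$ at $s=0$ with an increasing family of Lipschitz truncations and then passing to the limit. For each $\delta>0$ I would introduce
$$
\Phi_\delta(s):=\Gamma(\max(s,\delta))-\Gamma(\delta),\qquad s\geq 0,
$$
which is locally Lipschitz on $[0,\infty)$ (since $g$ is continuous and finite on $[\delta,+\infty)$), vanishes on $[0,\delta]$, and satisfies $\Phi_\delta'(s)=g(s)\chi_{(\delta,+\infty)}(s)$ at every $s\neq\delta$.

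Two chain-rule applications would then produce two expressions for $D\Phi_\delta(u)$. First, since $u\in BV_{\rm loc}(\Omega)$ with $D^ju=0$, the Lipschitz chain rule gives $\Phi_\delta(u)\in BV_{\rm loc}(\Omega)$ and, as measures on $\Omega$,
$$
D\Phi_\delta(u)=g(u)\chi_{\{u>\delta\}}\,Du.
$$
Second, because $\Gamma$ is strictly increasing on $(0,\infty)$ (as $g>0$), one has the pointwise identity $\Phi_\delta(u)=(\Gamma(u)-\Gamma(\delta))^+=(v-\Gamma(\delta))^+$, and $v=\Gamma(u)$ inherits $D^jv=0$ from $u$ by continuity of $\Gamma$. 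Applying the chain rule once more, now to the Lipschitz function $t\mapsto t^+$, yields
$$
D\Phi_\delta(u)=\chi_{\{v>\Gamma(\delta)\}}\,Dv=\chi_{\{u>\delta\}}\,Dv,
$$
where strict monotonicity of $\Gamma$ identifies the level sets. Comparing and taking total variations,
$$
\chi_{\{u>\delta\}}|Dv|=g(u)\chi_{\{u>\delta\}}|Du|\qquad\text{as measures on }\Omega,
$$
so that letting $\delta\to 0^+$ along a suitable sequence and using monotone convergence (since $\chi_{\{u>\delta\}}\nearrow\chi_{\{u>0\}}$) produces \eqref{fcl}. Local finiteness of $g(u)\chi_{\{u>0\}}|Du|$ is then inherited from $|Dv|\in\mathcal M_{\rm loc}(\Omega)$.

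The main obstacle is the rigorous invocation of the first chain rule: $\Phi_\delta$ is only Lipschitz, with a genuine corner at $s=\delta$ where $\Phi_\delta'$ jumps from $0$ to $g(\delta)$, so one must exclude a spurious contribution of $D\Phi_\delta(u)$ on the level set $\{u^*=\delta\}$. The hypothesis $D^ju=0$ removes any jump part, the absolutely continuous part $D^au$ automatically vanishes on any fixed level set, and by the BV coarea formula $|D^cu|(\{u^*=\delta\})=0$ for all but countably many $\delta>0$; choosing the truncation values $\delta_n\to 0^+$ outside this countable exceptional set makes the formula $D\Phi_{\delta_n}(u)=\Phi_{\delta_n}'(u)Du$ rigorous.
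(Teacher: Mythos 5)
Your proof is correct and follows essentially the same route as the paper: truncate $\Gamma$ near its singularity to obtain a Lipschitz function, apply the classical chain rule, use that the diffuse derivative of $u$ (here all of $Du$, since $D^ju=0$) does not charge level sets, and let the truncation level tend to zero by monotone convergence. The only point to tidy up is that, since $g$ may be unbounded at $+\infty$ and $u\in BV_{\rm loc}(\Omega)$ is not assumed bounded, your $\Phi_\delta$ is only \emph{locally} Lipschitz, so the direct appeal to \cite[Theorem 3.99]{afp} is not literally licensed; the paper avoids this by also cutting $\Gamma$ at the level $1/\epsilon$ and working on $\{\epsilon<u<1/\epsilon\}$, and you should insert the same upper truncation before invoking the classical chain rule.
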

\begin{proof}
	We consider a Lipschitz function $\Gamma_{\epsilon}(s)$ whose values agree with $\Gamma(s)$ for $\epsilon<s<\frac{1}{\epsilon}$, namely
	$$
	\Gamma_{\epsilon}(s)=
	\begin{cases}
	\Gamma(\epsilon), \ &s\le \epsilon,\\
	\Gamma(s), \ & \epsilon<s<\frac{1}{\epsilon},\\
	\Gamma\left(\frac{1}{\epsilon}\right), \ &s\ge \frac{1}{\epsilon}.	
	\end{cases}
	$$
	As the function $\Gamma_{\epsilon}(s)$ is Lipschitz then one can apply the classical chain rule to deduce for a nonnegative $\varphi\in C^1_c(\Omega)$ that
	\begin{equation*}
	\int_{\{\epsilon < u<\frac{1}{\epsilon}\}} \varphi |Dv| =  \int_{\{\epsilon < u<\frac{1}{\epsilon}\}} \varphi |D\Gamma(u)| = \int_{\{\epsilon < u<\frac{1}{\epsilon}\}} \varphi |D\Gamma_\epsilon(u)|= \int_{\{\epsilon < u<\frac{1}{\epsilon}\}} \varphi g(u)|Du|,
	\end{equation*} 
	where the second equality holds by the means of Proposition $3.92$ of \cite{afp}. Indeed, one has that $D (\Gamma (u) -\Gamma_\epsilon (u))= 0$ (and so $|D \Gamma (u) |=|D\Gamma_\epsilon (u)|$) on $\{\epsilon < u<\frac{1}{\epsilon}\}$. 
	 Now taking $\epsilon \to 0$ it follows from the monotone convergence Theorem that
	\begin{equation*}\label{chainrulepositive}
	\int_{\{u>0\}} \varphi |Dv| = \int_{\{u>0\}} \varphi g(u)|Du|\,,
	\end{equation*}	
and the proof is concluded.
\end{proof}
\begin{remark}\label{remarkchain}
We presented the result of Lemma \ref{chainrule} in general form; however  it is worth mentioning that if $\Gamma (0)=0$ then \eqref{fcl} becomes
$$
	|Dv|=g(u)\chi_{\{u>0\}}|Du| \text{ as measures in } \Omega\,
$$
as, using \cite[Proposition 3.92]{afp}, one has
$$
\int_{\{u>0\}} \varphi |Dv| =\int_\Omega \varphi |Dv|\,.
$$
Let us point out that this will always be the case for us  in the sequel besides some instances in which, as a matter of fact,  $u>0$ and so \eqref{fcl} simply reduces to $|Dv|=g(u)|Du|$. 
\end{remark}

\section{Main assumptions and results}  \label{sec3}
Let   $\Omega$  be  an open bounded subset of $\R^N$ ($N\ge 1$) with Lipschitz boundary and let us  consider  the following Dirichlet problem 
\begin{equation}
\label{pb}
\begin{cases}
\dis -\Delta_1 u = g(u)|D u|+h(u)f & \text{in}\;\Omega,\\
u=0 & \text{on}\;\partial\Omega,
\end{cases}
\end{equation}
where $-\Delta_1$ is the so-called 1-Laplace operator and  the datum $f$ is a nonnegative function belonging  to $L^N(\Omega)$.  
The nonlinearities $g:[0,\infty)\mapsto (0,\infty]$ and $h:[0,\infty)\mapsto [0,\infty]$ are assumed to be merely continuous and finite outside the origin;   we require the following controls near zero
\begin{equation}\label{g1}
\displaystyle \exists\;{c_1},\theta,s_1>0\;\ \text{such that}\;\  g(s)\le \frac{c_1}{s^\theta} \ \ \text{if} \ \ s\leq s_1, 
\end{equation}
and 
\begin{equation}\label{h1}
\begin{aligned}
\displaystyle &\exists\;{c_2},\gamma,s_2>0\;\ \text{such that}\;\  h(s)\le \frac{c_2}{s^\gamma} \ \ \text{if} \ \ s\leq s_2, \\
&\lim_{s\to \infty} h(s):=h(\infty)<\infty. 
\end{aligned}
\end{equation}
We stress that under the above assumptions the case of  $g, h$ being bounded is allowed. For the sake of presentation,  in this section,  we state and prove the existence of a solution to \eqref{pb} in the milder singular case \begin{equation}\label{mild}0<\theta<1\ \ \text{and}\ \ \ 0<\gamma\le 1\,, \end{equation}
 and in presence of a positive datum  $f$; in Section \ref{strongly} we will treat the critical  case $\theta=1$, while Section \ref{nonnegative} will be devoted to the general case of a nonnegative datum $f$ and $\gamma>1$.

\medskip

We start clarifying the notion of solution to \eqref{pb} in this case: 
\begin{defin}
	\label{weakdefpositive}
	A nonnegative function $u\in BV(\Omega)$ is a solution to problem \eqref{pb} if $g(u^*)\in L^1_{\rm loc}(\Omega, |Du|)$, $h(u)f \in L^1_{\rm loc}(\Omega)$ and if there exists $z\in \mathcal{D}\mathcal{M}^\infty(\Omega)$ with $||z||_{L^\infty(\Omega)^N}\le 1$ such that
	\begin{align}
	&-\operatorname{div}z = g(u^*)|Du| + h(u)f \ \ \text{as measures in }\Omega, \label{def_distrp=1}
	\\
	&(z,Du)=|Du| \label{def_zp=1} \ \ \ \ \text{as measures in } \Omega,
	\\
	&u(x)(1 + [z,\nu] (x))=0 \label{def_bordop=1}\ \ \ \text{for  $\mathcal{H}^{N-1}$-a.e. } x \in \partial\Omega.
	\end{align}
\end{defin}
\begin{remark}\label{rempos}
	Let us underline some relevant facts about the above Definition \ref{weakdefpositive}. First of all the use of  $g(u^*)$ is technically needed in order to give sense a priori  to \eqref{def_distrp=1}. As a matter of fact,  since we will show that in any cases  $u$ does not possess jump part then $g(u^*)$ can be regarded as $g(u)$ (actually $g(\tilde u)$, where $\tilde u $ its Lebesgue representative) once integrated against a measure that is absolutely continuous with respect to $\mathcal{H}^{N-1}$. 
	
Also observe that, if $h(0)=\infty$, then  $h(u)f\in L^1_{\rm loc}(\Omega)$ implies that $\{u=0\}\subset \{f=0\}$; in particular, as here $f>0$,   this  means that $u>0$ a.e. in $\Omega$. We also highlight that, as nowadays classical since \cite{ABCM},  \eqref{def_zp=1} is the way $z$ is intended to represent the quotient $|Du|^{-1}Du$. Finally condition \eqref{def_bordop=1} is the weak sense in which the Dirichlet datum is meant; it roughly asserts  that either $u$ has zero trace or the weak trace of the normal component of $z$ has least possible slope at the boundary.
\end{remark}
We are ready to state the existence result in this mild singular case: 
\begin{theorem}\label{teomain}
	Let $0<f\in L^N(\Omega)$ such that $||f||_{L^N(\Omega)}\mathcal{S}_1h(\infty)<1$ and let $g$ and $h$ satisfy  resp. \eqref{g1} and \eqref{h1} with \eqref{mild} in force. Then there exists a solution to problem \eqref{pb} in the sense of Definition \ref{weakdefpositive}.
\end{theorem}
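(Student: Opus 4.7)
The plan is to construct the solution via approximation by $p$-Laplacian problems as in \eqref{pp}, with $g_p$ and $h_p$ bounded truncations of $g$ and $h$ (for instance $g_p := T_{1/(p-1)}\circ g$ and $h_p := T_{1/(p-1)}\circ h$). Existence of a nonnegative $u_p \in W^{1,p}_0(\Omega)\cap L^\infty(\Omega)$ to \eqref{pp} follows from classical theory for $p$-Laplacian problems with natural gradient growth and bounded (truncated) singular lower order terms. I then pass to the limit $p\to 1^+$, extracting a BV limit $u$ together with a weak-$*$ limit $z$ of the vector fields $z_p := |\nabla u_p|^{p-2}\nabla u_p$, and verify \eqref{def_distrp=1}--\eqref{def_bordop=1}.

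\textbf{A priori estimates.} The core task is a bound on $\|u_p\|_{BV}$ that does not depend on $p$, with constants governed by the smallness threshold $\|f\|_N \mathcal{S}_1 h(\infty) < 1$. A uniform $L^\infty$ bound for $u_p$ is obtained by comparison with solutions of the gradient-free auxiliary problem \eqref{pp} with $g\equiv 0$, whose $p$-approximations are controlled precisely under the above smallness condition thanks to the results in \cite{DGOP}. To handle the natural growth term, I test \eqref{pp} with $\Psi(u_p)$ where $\Psi'(s) = e^{-\int_0^s g_p(t)\,dt}$: the subcritical hypothesis $\theta < 1$ in \eqref{mild} ensures $\Psi$ is bounded between two positive constants, and this choice of test function absorbs the gradient contribution yielding $\int_\Omega |\nabla u_p|^p \le C$ uniformly. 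Hölder's inequality then gives $\int_\Omega |\nabla u_p| \le C$, and so $\|u_p\|_{BV}$ is uniformly bounded.

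\textbf{Passage to the limit and the no-jump property.} BV compactness provides, along a subsequence, $u_p \to u$ in $L^1(\Omega)$ and a.e., with $u\in BV(\Omega)\cap L^\infty(\Omega)$. The bound $\|z_p\|_{L^{p'}}\le C$ together with the standard Hölder interpolation as $p\to 1^+$ gives a weak-$*$ limit $z\in L^\infty(\Omega)^N$ with $\|z\|_\infty \le 1$. Since $f>0$, the a.e. limit $u$ must be strictly positive (otherwise $h_p(u_p)f$ would fail to be equi-integrable on a set of positive measure), so $h(u)f\in L^1_{\rm loc}(\Omega)$ and $h_p(u_p)f \to h(u)f$ in $L^1_{\rm loc}$. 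The hard step is to show $D^j u = 0$, which is needed to apply Lemma \ref{chainrule} and give sense to $g(u^*)|Du|$ as $g(u)|Du|$. The idea is to set $v_p := \Gamma_p(u_p)$ with $\Gamma_p'= g_p$: the classical chain rule gives $|\nabla v_p| = g_p(u_p)|\nabla u_p|$, which is uniformly bounded in $L^1$ by the previous estimate. Thus a weak-$*$ limit $v\in BV_{\rm loc}(\Omega)$ exists, and the limit equation for $v$ has a right-hand side which is absolutely continuous with respect to $\mathcal{H}^{N-1}$; exploiting this together with the $\DM$-field structure (Lemma \ref{lemmal1}) and comparing $|Dv|$ with $g(u^*)|Du|$ on $J_u$, the jump contribution is forced to vanish. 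Lemma \ref{chainrule} and Remark \ref{remarkchain} then identify the limit of $g_p(u_p)|\nabla u_p|$ with $g(u)|Du|$ on $\{u>0\}$.

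\textbf{Identifying $z$ and the boundary condition.} To check \eqref{def_zp=1} and \eqref{def_bordop=1}, I test \eqref{pp} with $u_p$, pass to the $\liminf$ using lower semicontinuity of the total variation, and use the Green formula \eqref{green} on the limit side. The resulting inequality
$$
\int_\Omega (z, Du) + \int_{\partial\Omega} u\,[z,\nu]\, d\mathcal{H}^{N-1} \ge \int_\Omega |Du| + \int_{\partial\Omega} u\, d\mathcal{H}^{N-1},
$$
combined with $\|z\|_\infty \le 1$ and $|[z,\nu]|\le 1$, forces \eqref{def_zp=1} and \eqref{def_bordop=1}. Finally \eqref{def_distrp=1} is obtained by passing to the limit in the distributional formulation of \eqref{pp} and invoking Lemma \ref{lemmal1} to conclude that $-\operatorname{div}z \in \mathcal{M}(\Omega)$.
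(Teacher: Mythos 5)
Your overall strategy (approximation by \eqref{pbp}, uniform estimates, $BV$ compactness, identification of $z$) is the same as the paper's, but three of your key steps do not work as stated. First, the uniform $L^\infty$ bound cannot be obtained ``by comparison with the gradient-free auxiliary problem'': the term $g_p(u_p)|\nabla u_p|^p$ is a \emph{nonnegative source}, so any comparison argument places $u_p$ \emph{above} the auxiliary solutions $v_p$, not below. This is precisely why the paper does not take $g_p=T_{1/(p-1)}\circ g$ but modifies the truncation so that $g_p(s)\equiv 0$ for $s\ge\tilde c+p-1$, with $\tilde c$ the bound from Lemma \ref{lemmalim}: testing with $G_{k_p}(u_p)$, $k_p=\tilde c+p-1$, the gradient term then drops out and the Stampacchia-type argument with the sharp constant $\mathcal S_1$ gives $\|G_{\tilde c}(u)\|_{L^{N/(N-1)}(\Omega)}=0$ in the limit (Lemma \ref{corou}). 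Second, your test function $\Psi(u_p)$ with $\Psi'(s)=e^{-\int_0^s g_p}$ does not absorb the gradient term: the resulting identity is $\int_\Omega|\nabla u_p|^p\Psi'(u_p)=\int_\Omega g_p(u_p)\Psi(u_p)|\nabla u_p|^p+\int_\Omega h_p(u_p)f\Psi(u_p)$, and one would need $\Psi'(s)-g_p(s)\Psi(s)\ge c>0$, which fails already for $g\equiv 1$ (there $\Psi'(s)-\Psi(s)=2e^{-s}-1<0$ for $s>\log 2$). Reaction-type natural growth is absorbed by \emph{growing} exponentials $e^{\eta(\cdot)}-1$, and since $g$ blows up at $0$ while the sharp Sobolev constant is needed at infinity, the paper must split $u_p=T_\varepsilon(u_p)+T_{\tilde k-\varepsilon}(G_\varepsilon(u_p))+G_{\tilde k}(u_p)$ and estimate the three ranges separately (Lemma \ref{lemmastima}). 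Your claim that $\Psi$ is pinched between two positive constants also presupposes the $L^\infty$ bound you have not yet established.

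Third, the passage to the limit in the equation and the boundary condition are glossed over at exactly the hard points. Lower semicontinuity only yields the \emph{inequality} $-\operatorname{div}z\ge|D\Gamma(u)|+h(u)f$; to upgrade it to the equality \eqref{def_distrp=1} and simultaneously obtain \eqref{def_zp=1}, the paper tests with $e^{\Gamma_{p,1}(u_p)}\varphi$ so that the gradient term cancels exactly, derives $-\operatorname{div}(ze^{\Gamma(u)})=h(u)fe^{\Gamma(u)}$, and closes the loop through the chain of measure inequalities \eqref{proofdistr}; your plan to ``pass to the limit in the distributional formulation'' would require identifying the weak-$*$ limit of $g_p(u_p)|\nabla u_p|^p$ with $g(u)|Du|$, for which only ``$\ge$'' is available. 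Likewise, testing with $u_p$ and applying Green's formula (note your displayed inequality carries the wrong sign on the trace term, which would lead to $u(1-[z,\nu])=0$) leaves the identity $\int_{\partial\Omega}\bigl(u(1+[z,\nu])-\int_0^u g(t)t\,dt\bigr)\,d\mathcal H^{N-1}=0$, whose integrand has no definite sign, so \eqref{def_bordop=1} does not follow; the paper must test with $u_p^m$ and let $m\to\infty$ to kill the remainder $\int_0^u g(t)t^m\,dt\,/\,u^{m-1}$. These are genuine gaps, not expository shortcuts.
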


\begin{remark}\label{g=0}
The assumption on the datum $f$ can be slightly relaxed; in fact, reasoning as in \cite[Section 7]{DGOP} one can easily cover the case of $f$ belonging to the Lorentz space $L^{N,\infty}(\Omega)$ by substituting   the smallness assumption on the product $||f||_{L^N(\Omega)}{\mathcal{S}}_1 h(\infty)$ with 
\begin{equation} \label{lore}
||f||_{L^N(\Omega)}\tilde{\mathcal{S}}_1 h(\infty) <1\,,
\end{equation}
where $\tilde{\mathcal S}_{1}= [(N-1)\omega_{N}^{\frac{1}{N}}]^{-1}$ is the best constant in the embedding of $W^{1,1}_0 (\Omega)$ into $L^{\frac{N}{N-1},1}(\Omega)$ (where, as usual,  $\omega_N$ indicates the volume of the ball of radius 1 in $\rn$). 

Assumption \eqref{lore}   reveals a critical threshold  as it can be deduced by comparison  with the case $g\equiv0$  (see for instance \cite{CT,MST1,DGOP}); in this sense \eqref{lore} is sharp. 
\end{remark}

We also have the following regularity result on the solution given by Theorem \ref{teomain} whose proof will  follow by Lemma \ref{corou} and Lemma \ref{lemmasalto} below.  

\begin{theorem}\label{teoreg}
Under the same assumptions  the solution found  in  Theorem \ref{teomain} satisfies that $u\in L^\infty(\Omega)$ and $D^j u = 0$. 
\end{theorem}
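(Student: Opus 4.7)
The plan is to address the two assertions separately, following the structure suggested by the cross-references to Lemma \ref{corou} (for the $L^\infty$ bound) and Lemma \ref{lemmasalto} (for the vanishing jump part).

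For the $L^\infty$ bound on $u$, the strategy is to perform a Stampacchia-type truncation argument at the level of the approximating sequence $(u_p)$ from \eqref{pp} and then to pass to the limit. The key observation is that the smallness condition $\|f\|_{L^N}\mathcal S_1 h(\infty)<1$ is precisely the same threshold under which the companion problem \eqref{introapp} admits bounded solutions via the analysis of \cite{DGOP}; the plan is therefore to compare $u_p$ with an appropriate solution of the $p$-level version of \eqref{introapp}. The extra reaction-type term $g_p(u_p)|\nabla u_p|^p$ is controlled with the help of a change of unknown $\Phi$ built from the primitive $G(s)=\int_0^s g(t)\,dt$, finite thanks to \eqref{g1} and $\theta<1$, chosen so as to absorb the gradient contribution into a term of potential-type that can be compared pointwise with $h_p(v_p)f$. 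Once a bound $\|u_p\|_{L^\infty}\leq M$ is obtained with $M$ independent of $p$, Fatou-type arguments transfer it to $u$.

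For $D^j u=0$, the plan is to exploit the identity \eqref{def_zp=1}, $(z,Du)=|Du|$ as measures, together with $\|z\|_{L^\infty}\leq 1$. The Anzellotti--Chen--Frid theory recalled in Section \ref{due} grants $|(z,Du)|\leq |Du|$ as measures, so the identity forces the Radon--Nikodym density $d(z,Du)/d|Du|$ to equal $1$ pointwise $|Du|$-a.e.; in particular this must hold on the rectifiable set $J_u$. Writing the jump part of the pairing through the one-sided weak traces $[z,\nu_u]^{\pm}$ of $z$ along $J_u$, one sees that the identity forces both traces to equal $1$ $\mathcal H^{N-1}$-a.e.\ on $J_u$, whence the jump part of $\operatorname{div} z$ on $J_u$ vanishes. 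On the other hand, the measure $h(u)f\in L^1_{\mathrm{loc}}(\Omega)$ is absolutely continuous with respect to the Lebesgue measure and therefore has trivial jump part, so restricting \eqref{def_distrp=1} to $J_u$ yields
\[
0 \,=\, g(u^*)(u^+-u^-)\,\mathcal H^{N-1}\llcorner J_u;
\]
the strict positivity of $g$ on $(0,\infty)$, combined with $u^*>0$ on $J_u$ (a consequence of $u^+>u^-\geq 0$), forces $u^+=u^-$ $\mathcal H^{N-1}$-a.e.\ on $J_u$, i.e.\ $|D^j u|=0$.

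The principal difficulty lies in the $L^\infty$ step: because $-\Delta_1$ is invariant under monotone rescaling of the unknown, any change of variable that helps at level $p>1$ tends to degenerate as $p\to 1^+$, so that the constants in the comparison with the companion problem \eqref{introapp} must be tracked very carefully, making sure the critical smallness threshold survives intact. The jump analysis is conceptually cleaner but makes essential use of the one-sided trace theory for $\DM_{\mathrm{loc}}$ vector fields along rectifiable sets.
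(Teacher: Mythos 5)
Your overall architecture (boundedness via the approximating sequence, no-jump via a structural property of $z$) parallels the paper's, but both halves have genuine gaps. For the $L^\infty$ bound, you correctly identify the obstruction — any change of unknown $\Psi_p$ with $\Psi_p'(s)=e^{\frac{1}{p-1}\int_0^s g_p}$ that absorbs $g_p(u_p)|\nabla u_p|^p$ multiplies the source by $e^{\int_0^{u_p}g_p}$, so the comparison with \eqref{pbzero} would require smallness of $e^{\Gamma(\infty)}\|f\|_{L^N(\Omega)}\mathcal S_1 h(\infty)$ rather than the assumed sharp threshold — but you do not resolve it, so the plan does not close. The paper's resolution is not a change of unknown: the truncation $g_p$ is \emph{designed} to vanish identically for $s\ge \tilde c+p-1$, where $\tilde c$ is the level produced by Lemma \ref{lemmalim} for the auxiliary problem \eqref{pbzero}. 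Testing \eqref{pbp} with $G_{k_p}(u_p)$, $k_p=\tilde c+p-1$, the gradient term contributes nothing since $g_p(u_p)G_{k_p}(u_p)\equiv 0$, the estimate reduces verbatim to that of Lemma \ref{lemmalim} with the same constant, and one gets $\|G_{k_p}(u_p)\|_{L^{N/(N-1)}(\Omega)}\le C(p-1)$, hence $u\le\tilde c$ in the limit (Lemma \ref{corou}). No Stampacchia iteration or pointwise comparison is needed.

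For $D^ju=0$, your argument starts from \eqref{def_distrp=1} and \eqref{def_zp=1}, which is circular relative to the paper's construction: the identification $g(u^*)|Du|=|D\Gamma(u)|$ needed to even write \eqref{def_distrp=1} comes from the chain rule of Lemma \ref{chainrule}, whose hypothesis is precisely $D^ju=0$. The no-jump property must therefore be obtained earlier, from the supersolution inequality \eqref{soprasol} alone; this is exactly what Lemma \ref{lemmasalto} does, by surrounding pieces of $J_{\Gamma(u)}$ with shrinking cylinders, applying the Green formula \eqref{green} there, and then pulling the conclusion back through $\Gamma^{-1}$. Even reading the theorem a posteriori so that \eqref{def_distrp=1}--\eqref{def_zp=1} may be assumed, your step ``the identity forces both one-sided traces to equal $1$, whence $\operatorname{div}z$ has no jump on $J_u$'' invokes representation formulas for $(z,Du)\res J_u$ and $\operatorname{div}z\res J_u$ in terms of interior one-sided normal traces along rectifiable sets; the Anzellotti--Chen--Frid material recalled in Section \ref{due} only provides traces on $\partial\Omega$, so those formulas would have to be established — which is precisely the content Lemma \ref{lemmasalto} proves by hand. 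As written, both halves of the proposal rest on tools the paper neither assumes nor provides.
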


\begin{remark}\label{3.6}
As will be clear by the proof of Lemma \ref{lemmasalto} below, the fact that solutions of problem \eqref{pb} do not possess jump parts is a regularizing effect given by the presence of the gradient term $g(u)|D u|$; a similar situation was noticed in \cite{mazonsegura, ds, ls} while in the case $g(s)\equiv 0$ solutions can have a nontrivial jump part (see \cite{dgs, DGOP}). Also the fact that $u$ is bounded is quite natural and this is essentially due to the presence of the zero order term $h(u)f$.  In fact, as we will see, the solution we found lies underneath the solution of problem \eqref{pb} with  $g(s)\equiv 0$ found in \cite[Theorem 3.3]{DGOP}, that is shown to be bounded.  In other words,  the perturbation given by the gradient term does not make the situation worse with respect to  boundedness of the solution a rough reason being the following:  as it will be hinted by explicit examples (see Example \ref{example1} and Example \ref{example2} below)   the solutions tend to be  nearly "constant" inside $\Omega$ so that   $g(u)|D u|$ only acts near the boundary where the solution would like to be small (though not necessarily zero).  
\end{remark}

At  certain points we shall make use of  the following auxiliary function 
\begin{equation*}\label{gamma}
\Gamma(s)=  \displaystyle 
\int_{0}^{s} g(t) \ dt.
\end{equation*}
We explicitly  observe that, as $g$ is assumed to be positive and $0<\theta<1$, one has that $\Gamma (s)$ is well defined in $[0,\infty)$, and that there exists $\Gamma^{-1}(s)$ which is locally Lipschitz in $[0,\infty)$.

\medskip\medskip 

The proof of both Theorem \ref{teomain} and \ref{teoreg} will be built  in few steps and it will	 be completed in Section \ref{3.4} once we have all the ingredients at our disposal. In Section \ref{3.1} we  introduce the approximation scheme that will lead us to the proof of Theorem \ref{teomain} and we describe the main strategy that will involve the use of the associated auxiliary problem with no gradient term.   Section \ref{3.2} is devoted to obtain the basics a priori estimates on the approximating problems and to the identification of a bounded limit function $u$.  In Section \ref{3.3} we check a crucial property of this limit function, that is $u$ possesses no jump part. Finally, Section \ref{3.4} will be dedicated to the passage to the limit and consequently to the proof of our existence and regularity  results.

\subsection{An auxiliary problem and the approximation scheme}
\label{3.1}

In order to better introduce our strategy of the proof of Theorem \ref{teomain} it is worth   recalling  some basic facts about the case $g\equiv 0$. In  \cite{DGOP}, it is proved that  there exists a bounded solution $v$ (that, by the way,  is unique if $f>0$ a.e. and $h$ is decreasing) to 
\begin{equation}
\label{pbzero}
\begin{cases}
\dis -\Delta_1 v  = h(v)f & \text{in}\;\Omega,\\
v=0 & \text{on}\;\partial\Omega,
\end{cases}
\end{equation}
where $f\in L^N(\Omega)$ is nonnegative and $h$ is as above. The notion of solution to problem \eqref{pbzero} is given as in Definition \ref{weakdefpositive}  thought of as $g\equiv 0$. The solution $v$ is constructed through the following approximation scheme:
\begin{equation}
\label{pbpdgop}
\begin{cases}
\dis -\Delta_p v_p  = h_p(v_p)f & \text{in}\;\Omega,\\
v_p=0 & \text{on}\;\partial\Omega,
\end{cases}
\end{equation}
where $h_p(s):= T_{\frac{1}{p-1}}\left(h(s)\right)$. In \cite{DGOP} it is proved that $v_p$ converges almost everywhere to a bounded solution $v$ of \eqref{pbzero}.

\medskip

For our purposes it is also worth stating and sketching the proof of the following result; here we set the  useful  notation 
\begin{equation*}\label{sup}
\begin{aligned}
	h_k(\infty):= \sup_{s\in[k,\infty)} h(s).
\end{aligned}
\end{equation*}
\begin{lemma}\label{lemmalim}
	Let $0\leq f\in L^N(\Omega)$ such that $||f||_{L^N(\Omega)}\mathcal{S}_1h(\infty)<1$ and  let $h$ satisfy \eqref{h1}. Let $v_p$ be a solution to \eqref{pbpdgop} then there exists a positive constant $\tilde{c}$ such that
	 \begin{equation}\label{lim}
	 v\le \tilde{c},
	 \end{equation} 
	 where $v$ is the almost everywhere limit of $v_p$ as $p\to 1^+$ and solves \eqref{pbzero}.
\end{lemma}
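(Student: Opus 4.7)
The plan is to test the weak formulation of the approximating problem \eqref{pbpdgop} with the truncation $G_k(v_p)$ at a suitably large level $k$, derive a $p$-uniform integral identity, and then exploit the smallness assumption $\mathcal{S}_1 h(\infty)||f||_{L^N(\Omega)}<1$ via H\"older and Sobolev to force $||G_k(v_p)||_{L^{N/(N-1)}(\Omega)}\to 0$ as $p\to 1^+$. The desired pointwise bound on $v$ will then follow from the a.e.\ convergence $v_p\to v$ given by hypothesis.

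First I would fix $k>0$ large enough that
$$
h_k(\infty)\,\mathcal{S}_1\,||f||_{L^N(\Omega)}<1.
$$
This is possible because $h$ is continuous with $\lim_{s\to\infty}h(s)=h(\infty)$, so $h_k(\infty)\to h(\infty)$ as $k\to\infty$, and the limit value satisfies the strict inequality by assumption. For $p$ close enough to $1^+$, namely $p-1<1/h_k(\infty)$, the truncation in $h_p$ is inactive on $[k,\infty)$, hence $h_p(v_p)\le h_k(\infty)$ on the set $\{v_p>k\}$.

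Next I would use $G_k(v_p)\in W^{1,p}_0(\Omega)$ as test function in \eqref{pbpdgop}. After combining H\"older on the integral against $f$ with the Sobolev embedding $W^{1,1}_0(\Omega)\hookrightarrow L^{N/(N-1)}(\Omega)$ applied to $G_k(v_p)$, one gets
$$
\int_\Omega |\nabla G_k(v_p)|^p\,dx \;\le\; h_k(\infty)\,\mathcal{S}_1\,||f||_{L^N(\Omega)}\,\int_\Omega |\nabla G_k(v_p)|\,dx.
$$
Since $\nabla G_k(v_p)$ is supported in $\{v_p>k\}\subseteq\Omega$, a further H\"older inequality with exponents $p$ and $p/(p-1)$ yields
$$
\int_\Omega |\nabla G_k(v_p)|\,dx \;\le\; |\Omega|^{(p-1)/p}\Bigl(\int_\Omega |\nabla G_k(v_p)|^p\,dx\Bigr)^{1/p}.
$$
Plugging the first display into the second and rearranging gives
$$
\int_\Omega |\nabla G_k(v_p)|\,dx \;\le\; |\Omega|\,\bigl(h_k(\infty)\,\mathcal{S}_1\,||f||_{L^N(\Omega)}\bigr)^{1/(p-1)},
$$
and since the base is strictly less than $1$, the right hand side vanishes as $p\to 1^+$.

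To conclude, one more use of Sobolev gives $||G_k(v_p)||_{L^{N/(N-1)}(\Omega)}\to 0$; the a.e.\ convergence $v_p\to v$ together with Fatou's lemma then forces $G_k(v)=0$ almost everywhere, i.e.\ $v\le k=:\tilde c$. The main obstacle, and the genuine heart of the argument, is the initial step: enforcing the strict inequality $h_k(\infty)\,\mathcal{S}_1\,||f||_{L^N(\Omega)}<1$ by choosing $k$ large. This is precisely what turns the exponent $1/(p-1)$ into a decay mechanism as $p\to 1^+$; everything else is routine manipulation of energy integrals.
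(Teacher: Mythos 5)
Your proof is correct and follows essentially the same strategy as the paper: test \eqref{pbpdgop} with $G_k(v_p)$ for $k$ chosen so that $h_k(\infty)\mathcal{S}_1||f||_{L^N(\Omega)}<1$, combine H\"older and Sobolev, and conclude by lower semicontinuity/Fatou from the a.e.\ convergence $v_p\to v$. The only (harmless) technical difference is that you close the reverse-H\"older inequality $\int_\Omega|\nabla G_k(v_p)|\le|\Omega|^{(p-1)/p}\bigl(\int_\Omega|\nabla G_k(v_p)|^p\bigr)^{1/p}$ to obtain the decay $|\Omega|\bigl(h_k(\infty)\mathcal{S}_1||f||_{L^N(\Omega)}\bigr)^{1/(p-1)}$, whereas the paper uses Young's inequality $t\le t^p+\frac{p-1}{p}$ to get the weaker but equally sufficient bound $C||G_k(v_p)||_{L^{N/(N-1)}(\Omega)}\le\frac{p-1}{p}|\Omega|$; both vanish as $p\to1^+$.
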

\begin{proof}
Let us take $G_k(v_p)$ as a test function in \eqref{pbpdgop} deducing
\begin{equation}\label{stimaulim1dgop}
\int_\Omega|\nabla G_{k}(v_p)|^p = \int_\Omega h_p(v_p)f G_{k}(v_p) \le h_{k}(\infty) ||f||_{L^N(\Omega)} ||G_{k}(v_p)||_{L^{\frac{N}{N-1}}(\Omega)}.
\end{equation}
Now we apply the Young and the Sobolev inequalities on the left hand side of \eqref{stimaulim1dgop} obtaining
\begin{equation}\label{stimaulim2dgop}
\begin{aligned}
\mathcal{S}_1^{-1} ||G_{k}(v_p)||_{L^{\frac{N}{N-1}}(\Omega)} &\le \int_{\Omega}|\nabla G_{k}(v_p)| \le \int_{\Omega}|\nabla G_{k}(v_p)|^p + \frac{p-1}{p}|\Omega| 
\\
&\le   h_{k}(\infty) ||f||_{L^N(\Omega)} ||G_{k}(v_p)||_{L^{\frac{N}{N-1}}(\Omega)} + \frac{p-1}{p}|\Omega|.
\end{aligned}
\end{equation} 
Under the assumption that $||f||_{L^N(\Omega)} \mathcal{S}_1 h(\infty) <1$ then we can pick a  $\tilde{c}$ such that $\mathcal{S}_1^{-1}-||f||_{L^N(\Omega)} h_{\tilde{c}}(\infty) >C>0$ for a constant $C$ which is independent of $p$. Then, choosing $k=\tilde{c}$ in \eqref{stimaulim2dgop}, one gets 
\begin{equation*}
C ||G_{\tilde{c}}(v_p)||_{L^{\frac{N}{N-1}}(\Omega)} \le  \frac{p-1}{p}|\Omega|,
\end{equation*} 
whence, taking $p\to1^+$, by weak lower semicontinuity of the norm, one has $||G_{\tilde{c}}(v)||_{L^{\frac{N}{N-1}}(\Omega)}=0$, that is   $v\le \tilde{c}$
almost everywhere in $\Omega$. Finally, reasoning  as in \cite{DGOP}, one can show that $v$ is a solution to \eqref{pbzero}.
\end{proof}
\begin{remark} \label{czero} We  stress that,   if $h(0)<\infty$, then, as shown in \cite[Theorem 3.4]{DGOP},  $\tilde{c}\equiv 0$  provided the $L^N (\Omega)$ norm of $f$ is small enough; we shall come back on this fact later on (see Remark \ref{5.5} below). 
\end{remark}

Now let us come back to problem \eqref{pb};   following the heuristics given in Remark \ref{3.6},    our strategy will be based on finding a solution to \eqref{pb} which still lives in the interval $[0, \tilde{c}]$ where $v$ does.

First of all  we are interested in deducing some {a priori} estimates for the solutions to the following approximation problem 
\begin{equation}
\label{pbp}
\begin{cases}
\dis -\Delta_p u_p  = g_p(u_p)|\nabla u_p|^p + h_p(u_p)f & \text{in}\;\Omega,\\
u_p=0 & \text{on}\;\partial\Omega,
\end{cases}
\end{equation}
where
$$
g_p(s):= 
\begin{cases}
	T_{\frac{1}{p-1}}\left(g(s)\right) \ \ \ \ &s< \tilde{c},\\
	\displaystyle -\frac{g(\tilde{c})}{p-1}(s-\tilde{c}-p+1)  &\tilde{c}\le s \le \tilde{c} + p-1,\\
	0 &s> \tilde{c}+p-1,
\end{cases}
$$
and without loosing generality, from here on we assume that  $1<p<2$ and that $T_{\frac{1}{p-1}}\left(g(\tilde{c})\right) =g(\tilde{c})$. 

 \begin{figure}[htbp]\centering
\includegraphics[width=2in]{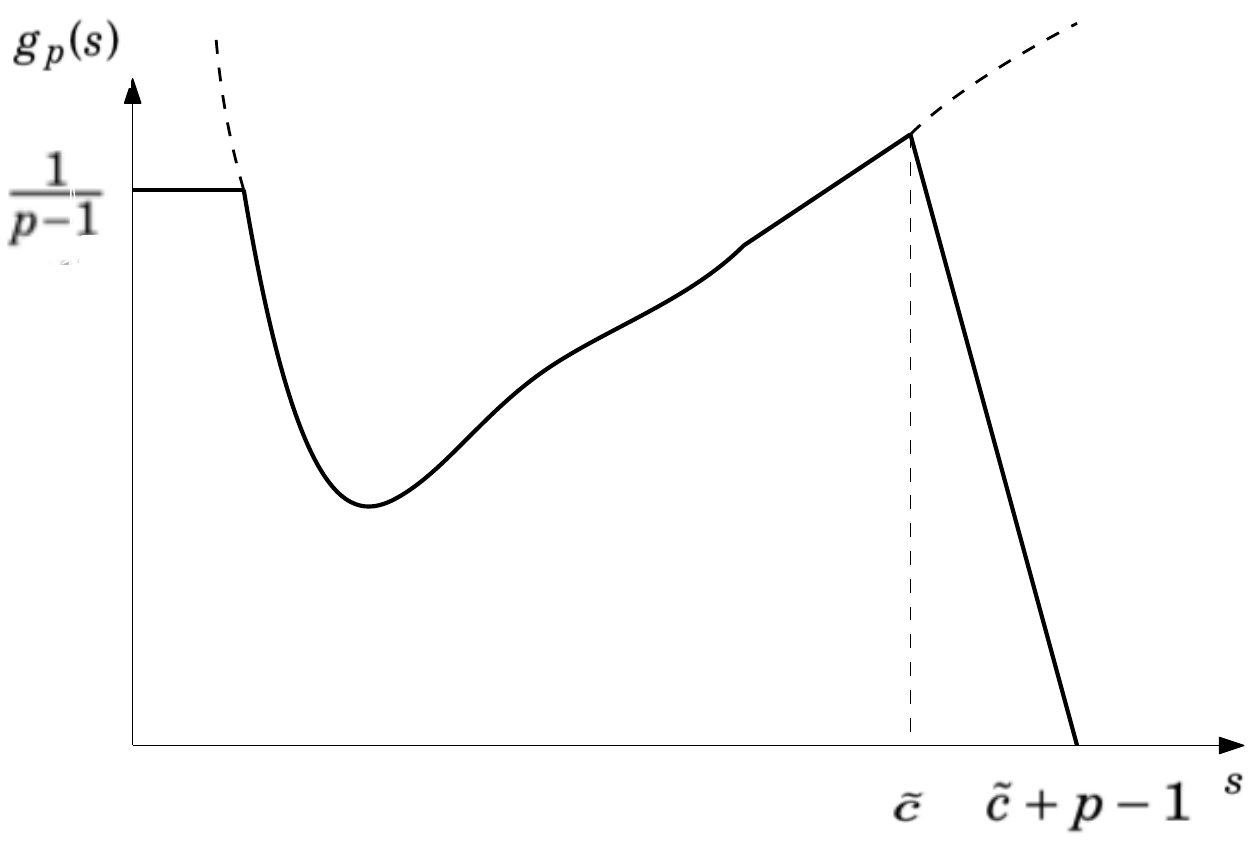}
\caption{$g_p (s)$}
\end{figure}

Moreover 
$h_p(s):= T_{\frac{1}{p-1}}\left(h(s)\right)$; again, without loss of generality, up to choose $p$ near to $1^+$, we can assume that we are (possibly) truncating $h$ only near $0$.

For either $q=1$ or $q=p$, let us also define the auxiliary function  
\begin{equation*}\label{gammap}
\Gamma_{p,q}(s) := \displaystyle 
\int_{0}^{s} g_p^{\frac{1}{q}}(t) \ dt\,;
\end{equation*} 
observe that, in both cases, $\Gamma_{p,q}(s)$ converges to $\Gamma(s)$ for any $s\ge0$ as $p\to 1^+$.

The existence of a nonnegative solution $u_p \in W^{1,p}_0(\Omega)\cap L^\infty(\Omega)$ of problem \eqref{pbp} is granted by the following argument: in \cite{ps}, it is proved the existence of a solution to 
\begin{equation*}
\begin{cases}
\dis -\Delta_p w  = g_p(w)|\nabla w|^p + h_p(v)f & \text{in}\;\Omega,\\
w=0 & \text{on}\;\partial\Omega,
\end{cases}
\end{equation*}
for any  nonnegative $v$ belonging to $L^p(\Omega)$; as the result comes along with standard associated  estimates,  it is easy to check that the  application $T: L^p(\Omega)\mapsto L^p(\Omega)$ such that $T(v)=w$ admits a fixed point, which is a solution to \eqref{pbp}.

\subsection{A priori estimates and boundedness of the limit function}\label{3.2}

As far as  the estimates are concerned the positivity of $f$  it is not relevant, so that  in this section we will consider the more general case of a nonnegative datum $f\in L^N(\Omega)$; this generalization will be useful in Section \ref{nonnegative}.  As it is clear the main issues shall  rely on proving that solutions to \eqref{pbp} enjoy some estimates   which are independent of $p$ (at least for $p\sim 1^+$). The first result contains an estimate on a suitable exponential of $u_p$ and it also make use of the following elementary inequality. 

\begin{lemma}\label{ineq}
For a fixed $C>1$, there exists $p_0 \in (1,2)$ such that  
	\begin{equation*}
	e^{2s}-1\le C \left(e^{\frac{2s}{p}}-1\right)^p, \ \ \ \forall s\ge 1,
	\end{equation*}
	holds for any $p\in (1,p_0)$. 
\end{lemma}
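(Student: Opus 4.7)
The plan is to rewrite the inequality as $\psi(p,s) \geq 1/C$, where
$$
\psi(p,s) := \frac{\bigl(e^{2s/p}-1\bigr)^p}{e^{2s}-1},
$$
and to observe that $\psi(1,s) \equiv 1$. It then suffices to prove that $\psi(p,s) \to 1$ as $p \to 1^+$ \emph{uniformly} in $s \in [1,\infty)$; the existence of the desired threshold $p_0$ follows at once from the fact that $C > 1$.

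The first step is to reduce the two-parameter problem to a one-variable one via the substitution $u := e^{-2s/p}$. Using the identities $e^{2s/p}-1 = (1-u)/u$ and $e^{2s}-1 = (1-u^p)/u^p$, taking logarithms yields
$$
\ln \psi(p,s) = p\ln(1-u) - \ln(1-u^p) =: \Phi(p;u).
$$
Since $s \geq 1$ and $p \in (1,2)$ force $2s/p \geq 1$, the parameter $u$ ranges in $(0,e^{-1}]$, and clearly $\Phi(1;u) = 0$. It is therefore enough to bound $\partial_p \Phi$ uniformly on $[1,2]\times(0,e^{-1}]$ and conclude via the mean value theorem in the variable $p$ alone.

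A direct computation gives
$$
\partial_p \Phi(p;u) = \ln(1-u) + \frac{u^p\ln u}{1-u^p}.
$$
For $u \in (0,e^{-1}]$, the first term is bounded in absolute value by $|\ln(1-e^{-1})|$. For the second, since $p\geq 1$ and $u \leq e^{-1}$ one has $u^p \leq u \leq e^{-1}$, hence $1-u^p \geq 1-e^{-1}$; the elementary bound $u|\ln u| \leq e^{-1}$ on $(0,1]$ (attained at $u=e^{-1}$) then gives $u^p|\ln u| \leq u|\ln u| \leq e^{-1}$. Combining these yields $|\partial_p\Phi(p;u)| \leq M$ for an absolute constant $M$, so $|\Phi(p;u)| \leq M(p-1)$ for all $u$, and consequently $\psi(p,s) \geq e^{-M(p-1)}$. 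Choosing $p_0$ so that $e^{-M(p_0-1)} \geq 1/C$, namely $p_0 \leq 1 + (\ln C)/M$, completes the proof.

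The only real obstacle is the uniformity in $s \in [1,\infty)$, since pointwise convergence $\psi(p,s) \to 1$ as $p \to 1^+$ is immediate; the key observation is that the substitution $u = e^{-2s/p}$ compactifies the problem by turning the unbounded half-line $s\in[1,\infty)$ into the bounded interval $u\in(0,e^{-1}]$, after which all the estimates become elementary.
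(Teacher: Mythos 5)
Your proof is correct, and in fact it supplies an argument that the paper itself omits: Lemma \ref{ineq} is stated there without proof (the text jumps immediately to ``An easy consequence of Lemma \ref{ineq} is\dots''), evidently regarding it as elementary. Your reduction via $u = e^{-2s/p}$, which turns the unbounded $s$-range into the compact interval $u\in(0,e^{-1}]$ and decouples $u$ from $p$, is the right observation; the uniform bound $|\partial_p\Phi|\le M$ and the mean value theorem in $p$ then give the uniform lower bound $\psi(p,s)\ge e^{-M(p-1)}$, from which the threshold $p_0$ (taking it strictly below $2$, say $p_0 = \min\bigl(\tfrac32,\, 1+\tfrac{\ln C}{M}\bigr)$) follows since $C>1$. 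All the individual estimates — $|\ln(1-u)|\le|\ln(1-e^{-1})|$, $u^p\le u$ for $p\ge1$, $u|\ln u|\le e^{-1}$ on $(0,1]$, and $1-u^p\ge 1-e^{-1}$ — check out.
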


An easy consequence of Lemma \ref{ineq} is that for a sequence of constant $C_p$  such that $C_p\to 1^+$ as $p\to 1^+$ one has 
	\begin{equation}\label{sequencep}
	e^{2s}-1\le C_p \left(e^{\frac{2s}{p}}-1\right)^p, \ \ \ \forall s\ge 1\,,
	\end{equation}
	up to suitably choosing a sequence of  $p$'s converging to $1^+$. 
\begin{lemma}\label{lemmastima}
Let $g$,  $h$ satisfy  \eqref{g1} and \eqref{h1} with \eqref{mild} in force,  and let $0\le f\in L^N(\Omega)$ such that $||f||_{L^N(\Omega)}\mathcal{S}_1h(\infty)< 1$. Let $u_p$ be a solution of \eqref{pbp} then there exists $p_0>1$ such that for any $\eta>0$ 
\begin{equation}\label{stimap}
\int_{\Omega} |\nabla (e^{\eta u_p}-1)|^p \le C,
\end{equation}
for some constant $C$ which does not depend on $p\in (1,p_0)$;   moreover, 
\begin{equation}\label{stimabv}
|| e^{\eta u_p}-1||_{W^{1,1}_0(\Omega)}\leq C,
\end{equation}
again for some $C$ not depending on  $p\in (1,p_0)$. 
\end{lemma}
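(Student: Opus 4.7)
My strategy is to test \eqref{pbp} with an exponential transformation of $u_p$ that allows absorbing the gradient term into the principal part, and then to close the estimate through the Sobolev embedding together with the smallness hypothesis on $f$. Since the map $\eta\mapsto\int_\Omega|\nabla(e^{\eta u_p}-1)|^p$ is monotone in $\eta$ (indeed $|\nabla(e^{\eta u_p}-1)|=\eta e^{\eta u_p}|\nabla u_p|$), it is enough to prove \eqref{stimap} for $\eta$ sufficiently large, the case of smaller exponents following by monotonicity (with a constant depending on $\eta$). I would then use the test function $\Phi(u_p):=(e^{p\eta u_p}-1)/(p\eta)$ in \eqref{pbp}, which, since $\Phi'(u)=e^{p\eta u}$, produces
\[
\int_\Omega|\nabla u_p|^p\Big[e^{p\eta u_p}-g_p(u_p)\Phi(u_p)\Big]=\int_\Omega h_p(u_p)\,f\,\Phi(u_p).
\]

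The absorption of the gradient term amounts to showing that, for $\eta$ large enough, the bracket dominates $(1-\sigma)e^{p\eta u_p}$ pointwise, for any preassigned small $\sigma>0$. Since $\Phi(u)/e^{p\eta u}=(1-e^{-p\eta u})/(p\eta)\le\min\{u,(p\eta)^{-1}\}$, I split the range of $u_p$ into three zones: on $\{u_p\le u_\ast\}$ with $u_\ast$ small, assumption \eqref{g1} with $\theta<1$ gives $g_p(u_p)\Phi(u_p)/e^{p\eta u_p}\le c_1 u_\ast^{1-\theta}<\sigma$; on the intermediate zone $\{u_\ast<u_p\le\tilde c+1\}$, continuity of $g$ on this compact interval yields a bound $G$, so the ratio is at most $G/(p\eta)<\sigma$ for $\eta$ large; finally, on $\{u_p>\tilde c+1\}$, the definition of $g_p$ gives $g_p(u_p)=0$. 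Thus, using $\int_\Omega e^{p\eta u_p}|\nabla u_p|^p=\eta^{-p}\int_\Omega|\nabla(e^{\eta u_p}-1)|^p$, one finds
\[
\frac{1-\sigma}{\eta^p}\int_\Omega|\nabla(e^{\eta u_p}-1)|^p\le\frac{1}{p\eta}\int_\Omega h_p(u_p)\,f\,(e^{p\eta u_p}-1).
\]

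For the right-hand side, one splits again at levels $k\in(0,s_2)$ and $K\ge\tilde c$: on $\{u_p\le k\}$, \eqref{h1} with $\gamma\le 1$ together with $e^{p\eta u_p}-1\le p\eta u_p e^{p\eta k}$ gives a contribution $\le c_2 p\eta k^{1-\gamma}e^{p\eta k}\|f\|_{L^1}$; on $\{k<u_p\le K\}$, continuity of $h$ gives a $K$-dependent constant times $\|f\|_{L^1}$; on $\{u_p>K\}$, $h_p(u_p)\le h_K(\infty)$. Lemma \ref{ineq} via \eqref{sequencep} (with $C_p\to 1$) provides the pointwise bound $e^{p\eta u_p}-1\le(e^2-1)+C_p(e^{\eta u_p}-1)^p$. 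Hölder's inequality with exponents $N$ and $N/(N-1)$, the interpolation $\|v\|_{pN/(N-1)}\le|\Omega|^{(p-1)/(pN)}\|v\|_{Np/(N-p)}$, and the Sobolev embedding $W^{1,p}_0(\Omega)\hookrightarrow L^{Np/(N-p)}(\Omega)$ (best constant $\mathcal{S}_p\to\mathcal{S}_1$ as $p\to 1^+$) then deliver
\[
\int_\Omega f(e^{\eta u_p}-1)^p\le\mathcal{S}_p^p\,|\Omega|^{(p-1)/N}\,\|f\|_{L^N}\int_\Omega|\nabla(e^{\eta u_p}-1)|^p.
\]

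Collecting everything, one obtains an inequality of the form $\big[1-(1+o_p(1))\,h_K(\infty)\,\mathcal{S}_1\,\|f\|_{L^N}\big]\int_\Omega|\nabla(e^{\eta u_p}-1)|^p\le\widetilde C(k,K)$, with $o_p(1)\to 0$ as $p\to 1^+$ (the residual factor $(1-\sigma)^{-1}$ being reabsorbed by choosing $\sigma$ small). Taking $K$ large so that $h_K(\infty)$ approaches $h(\infty)$, the smallness condition $\mathcal{S}_1 h(\infty)\|f\|_{L^N}<1$ makes the bracket uniformly positive for $p\in(1,p_0)$, yielding \eqref{stimap}. Estimate \eqref{stimabv} then follows at once from Hölder's inequality $\int_\Omega|\nabla v|\le|\Omega|^{1-1/p}(\int_\Omega|\nabla v|^p)^{1/p}$ applied to $v=e^{\eta u_p}-1$, together with the Sobolev embedding for the $L^1$-part of the norm. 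The most delicate step is the asymptotic bookkeeping of the $p$-dependent constants $\mathcal{S}_p,\,C_p,\,|\Omega|^{(p-1)/N}$, all convergent to $1$, ensuring that the sharp smallness hypothesis on the data is effectively inherited, uniformly in $p$, in the limit $p\to 1^+$.
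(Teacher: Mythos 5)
Your proof is correct, and it reaches the estimate by a more streamlined route than the paper's. The paper decomposes $u_p=T_{\varepsilon}(u_p)+T_{\tilde k-\varepsilon}(G_{\varepsilon}(u_p))+G_{\tilde k}(u_p)$ and tests \eqref{pbp} three times --- with $T_\varepsilon(u_p)$, with $e^{\eta p T_{\tilde k-\varepsilon}(G_\varepsilon(u_p))}-1$, and with $e^{\eta p G_k(u_p)}-1$ --- absorbing the gradient term separately on each range before reassembling the pieces; you instead test once with $\Phi(u_p)=(e^{p\eta u_p}-1)/(p\eta)$ and carry out the same three-zone analysis pointwise on the coefficient $e^{p\eta u_p}-g_p(u_p)\Phi(u_p)$, using $\theta<1$ near zero, continuity of $g$ on $[u_*,\tilde c+1]$ in the middle, and $g_p\equiv 0$ above $\tilde c+p-1$. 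The essential ingredients are identical: Lemma \ref{ineq}/\eqref{sequencep} to pass from $e^{p\eta u_p}-1$ to $(e^{\eta u_p}-1)^p$, H\"older and Sobolev with the asymptotics $\mathcal S_p^p\to\mathcal S_1$, $C_p\to1$, $|\Omega|^{(p-1)/N}\to1$, and the hypothesis $\|f\|_{L^N(\Omega)}\mathcal S_1h(\infty)<1$ exploited through $h_K(\infty)\downarrow h(\infty)$, so the sharp smallness condition is inherited uniformly in $p$ in exactly the same way. Your order of choices (first $K$, then $\sigma$, then $u_*$ and $\eta$, then $p_0$) is consistent and free of circularity, and, as in the paper, $p_0$ depends on $\eta$, the case of small $\eta$ being recovered by the monotonicity you note at the outset. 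What the single-test-function version buys is a shorter argument with cleaner bookkeeping of the $p$-dependent constants; what the paper's version buys is that the three separate mechanisms (coercivity at zero, exponential absorption in the middle, Sobolev absorption at infinity) remain visible as independent estimates, which is the form reused later in Lemma \ref{lemmastimastrong} and in Section \ref{nonnegative}.
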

\begin{proof}
	We start taking  $e^{\eta pG_{k}(u_p)}-1$ with $\eta\ge 1$ and $k\ge\tilde{c} + 1$ as a test function (we recall $1<p<2$ and that $g_p(s)\equiv 0$ if $s\ge \tilde{c}+p-1$) in the weak formulation of \eqref{pbp}. Hence one has that 
	\begin{equation*}
	\begin{aligned}
	\eta p\int_\Omega |\nabla G_k(u_p)|^p e^{\eta pG_k(u_p)} &= \int_\Omega h_p(u_p)f(e^{\eta pG_k(u_p)}-1) \le h_k(\infty)\int_\Omega f (e^{\eta pG_k(u_p)}-1)
	\\
	&= h_k(\infty)\int_{\{k\le u_p\le k+2\}} f  (e^{\eta pG_k(u_p)}-1) + h_k(\infty)\int_{\{u_p > k+2\}}f  (e^{\eta pG_k(u_p)}-1)
	\\
	&\le h_k(\infty)e^{2\eta p}||f||_{L^1(\Omega)} + C_p h_k(\infty)\int_{\{u_p > k+2\}}f (e^{\eta G_k(u_p)}-1)^p,
	\end{aligned}
	\end{equation*}
	in which we have also used \eqref{sequencep}.	At this point one can apply the H\"older and the Sobolev inequalities deducing 
	\begin{equation*}\label{stimagrad}
	\begin{aligned}
	\eta p\int_\Omega |\nabla G_k(u_p)|^p e^{\eta pG_k(u_p)} &\le h_k(\infty) e^{2\eta p} ||f||_{L^1(\Omega)} + C_p h_k(\infty)||f||_{L^N(\Omega)} \left(\int_{\Omega}(e^{\eta G_k(u_p)}-1)^{\frac{pN}{N-p}}\right)^{\frac{N-p}{N}}|\Omega|^{\frac{p-1}{N}}
	\\
	&
	\le h_k(\infty) e^{2\eta p} ||f||_{L^1(\Omega)} + C_p h_k(\infty)||f||_{L^N(\Omega)} \mathcal{S}_p^p \eta^p\int_{\Omega}|\nabla G_k(u_p)|^p e^{\eta pG_k(u_p)}|\Omega|^{\frac{p-1}{N}},
	\end{aligned}
	\end{equation*}
	and one can observe that for any $\eta \ge 1$ there exists $\tilde{k}>0$ sufficiently large and $p_0$ sufficiently close to $1$ such that
	$$\eta p- C_p h_k(\infty)||f||_{L^N(\Omega)} \mathcal{S}_p^p \eta^p |\Omega|^{\frac{p-1}{N}}>C>0, \ \ \ \forall k\ge \tilde{k}, \ \forall p\in (1, p_0),$$
	for a constant $C$ not depending on both $p$, $k$.
	Hence one has that for any $\eta>0$ (by monotonicity)
	\begin{equation}\label{stimagk2}
	\int_\Omega |\nabla G_{k}(u_p)|^p e^{\eta pG_{k}(u_p)} \le C,
	\end{equation}
	for some positive constant $C$ independent of $p\in(1,p_0)$ and for every $k\ge \tilde{k}\ge \tilde{c} + 1$. Now we decompose $u_p$ as
	$$u_p= T_{\tilde{k}}(u_p) + G_{\tilde{k}}(u_p) = T_{\varepsilon}(u_p) + T_{\tilde{k}-\varepsilon}(G_{\varepsilon}(u_p)) + G_{\tilde{k}}(u_p),$$
	 where $\varepsilon$ will be fixed small enough. 
We take $T_\varepsilon(u_p)$ as a test function in the weak formulation solved by \eqref{pbp} yielding to
\begin{equation*}\label{stimatk}
\begin{aligned}
&\int_{\Omega} |\nabla T_\varepsilon (u_p)|^p \le \left(c_1\varepsilon^{1-\theta}+\sup_{s\in[s_1,\tilde{c}+1)}g(s)\varepsilon\right)\int_\Omega |\nabla u_p|^p + \left(c_2\varepsilon^{1-\gamma} + \sup_{s\in[s_2,\infty)}h(s)\varepsilon\right) \int_\Omega f
\\
&=\left(c_1\varepsilon^{1-\theta}+\sup_{s\in[s_1,\tilde{c}+1)}g(s)\varepsilon\right)\int_\Omega |\nabla T_\varepsilon(u_p)|^p + \left(c_1\varepsilon^{1-\theta}+\sup_{s\in[s_1,\tilde{c}+1)}g(s)\varepsilon\right)\int_\Omega |\nabla T_{\tilde{k}-\varepsilon}(G_\varepsilon(u_p))|^p
\\
& + \left(c_2\varepsilon^{1-\gamma} + \sup_{s\in[s_2,\infty)}h(s)\varepsilon\right) \int_\Omega f,
\end{aligned}	
\end{equation*}
whence, fixing $\varepsilon$ such that $1-\left(c_1\varepsilon^{1-\theta}+\displaystyle \sup_{s\in[s_1,\tilde{c}+1)}g(s)\varepsilon\right)>C^{-1}$ for some positive $C$, then one is lead to 
\begin{equation}\label{stima1}
\begin{aligned}
\int_{\Omega} |\nabla T_{\varepsilon}(u_p)|^p &\le C\left(c_1\varepsilon^{1-\theta}+\sup_{s\in[s_1,\tilde{c}+1)}g(s)\varepsilon\right)\int_\Omega |\nabla T_{\tilde{k}-\varepsilon}(G_{\varepsilon}(u_p))|^p
\\
&+C\left(c_2\varepsilon^{1-\gamma} + \sup_{s\in[s_2,\infty)}h(s)\varepsilon\right)\int_\Omega f.
\end{aligned}	
\end{equation}	
We just need to estimate the first term on the right hand side of the previous inequality.
Now we test the weak formulation of \eqref{pbp} with $e^{\eta pT_{\tilde{k}-\varepsilon}(G_{\varepsilon}(u_p))}-1$ obtaining
\begin{equation*}
\begin{aligned}
& \eta p\int_\Omega |\nabla T_{\tilde{k}-\varepsilon}(G_{\varepsilon}(u_p))|^p e^{\eta pT_{\tilde{k}-\varepsilon}(G_{\varepsilon}(u_p))} = \int_{\Omega} g_p(u_p)|\nabla u_p|^p(e^{\eta pT_{\tilde{k}-\varepsilon}(G_{\varepsilon}(u_p))}-1) \\ & + \int_\Omega h_p(u_p)f(e^{\eta pT_{\tilde{k}-\varepsilon}(G_{\varepsilon}(u_p))}-1)
\le \sup_{s\in[\varepsilon,\tilde{c}+1)}g(s) \int_{\Omega} |\nabla T_{\tilde{k}-\varepsilon}(G_{\varepsilon}(u_p))|^pe^{\eta pT_{\tilde{k}-\varepsilon}(G_{\varepsilon}(u_p))} \\ &  + e^{\eta p(\tilde{k}-\varepsilon)}\sup_{s\in[\varepsilon,\infty)}h(s)\int_\Omega f.
\end{aligned}
\end{equation*}
Moreover there exists $\eta_1$ such that $\eta p - \displaystyle \sup_{s\in[\varepsilon,\tilde{c}+1)}g(s)>C>0$ for every $\eta\ge \eta_1$ and where $C$ does not depend on $p$. Hence one has
\begin{equation}\label{stimatkgk}
\begin{aligned}
&C \int_\Omega |\nabla T_{\tilde{k}-\varepsilon}(G_{\varepsilon}(u_p))|^p e^{\eta pT_{\tilde{k}-\varepsilon}(G_{\varepsilon}(u_p))} \le e^{\eta p(\tilde{k}-\varepsilon)}\sup_{s\in[\varepsilon,\infty)}h(s)\int_\Omega f \le C.
\end{aligned}
\end{equation} 
Therefore, it follows by using \eqref{stimatkgk} in \eqref{stima1} and by monotonicity, that 
$$\int_\Omega |\nabla T_{\varepsilon}(u_p)|^p e^{\eta pT_{\varepsilon}(u_p)} \le e^{\eta p\varepsilon} \int_\Omega |\nabla T_{\varepsilon}(u_p)|^p  \le  C,$$
which, gathered with \eqref{stimagk2} and \eqref{stimatkgk}, concludes the proof of \eqref{stimap}.

In order to check \eqref{stimabv} we use  H\"older inequality  and \eqref{stimap} one has 
	\begin{equation*}
	\begin{aligned}\label{bv}
	\int_{\Omega} |\nabla (e^{\eta u_p}-1)| \le \eta^p\int_{\Omega} |\nabla u_p|^pe^{\eta p u_p}  + \frac{p-1}{p} |\Omega|\le C,		   
	\end{aligned}	
	\end{equation*}			
	where $C$ is independent of $p\in (1,p_0)$. 

\end{proof}

Now we refine the estimates of Lemma \ref{lemmastima}, deducing  that  the sequence $u_p$ is bounded  in $BV(\Omega)$. This will take to the existence of a limit function which will be the candidate to be a solution for \eqref{pb}. Here we also show that this limit function is less or equal than $\tilde{c}$ almost everywhere, which is the value given by Lemma \ref{lemmalim}.
\begin{lemma}\label{corou}
		Under the assumptions of Lemma \ref{lemmastima} there exists $p_0>1$ such that $u_p$ is bounded in $BV(\Omega)$  and $\Gamma_{p,p}(u_p)$ is bounded in $BV_{\rm loc}(\Omega)$ uniformly with respect to $p\in (1,p_0)$. Moreover there exists $u\in BV(\Omega)\cap L^\infty(\Omega)$ such that $u_p$ converges to $u$ (up to a subsequence) in $L^q(\Omega)$ for every  $q<\infty$ \bk and $\nabla u_p$ converges to $D u$ locally *-weakly as measures. Finally it holds
		\begin{equation}\label{bounded}
		||u||_{L^\infty(\Omega)}\le \tilde{c}.
		\end{equation}
\end{lemma}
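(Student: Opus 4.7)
My plan is to chain together three independent pieces of information: the uniform exponential estimate \eqref{stimabv}, a comparison test with $G_k(u_p)$ that mimics the argument of Lemma \ref{lemmalim}, and a Young-type splitting to handle the auxiliary quantity $\Gamma_{p,p}(u_p)$. The first, elementary step is to extract a uniform $BV$-bound on $u_p$: taking $\eta=1$ in \eqref{stimabv} and using $u_p\ge 0$ (so that $e^{u_p}\ge 1$ and hence $|\nabla(e^{u_p}-1)|=e^{u_p}|\nabla u_p|\ge|\nabla u_p|$), one obtains $\|u_p\|_{W^{1,1}_0(\Omega)}\le C$ uniformly for $p\in(1,p_0)$. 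Standard $BV$-compactness then produces $u\in BV(\Omega)$ and a subsequence with $u_p\to u$ in $L^1(\Omega)$ and a.e.\ in $\Omega$, together with the locally $*$-weak convergence $\nabla u_p\to Du$ as measures. Moreover, the Sobolev embedding applied to \eqref{stimabv} bounds $\{e^{\eta u_p}\}_p$ in $L^{N/(N-1)}(\Omega)$ for every $\eta>0$; since $u_p^q\le (q!/\eta^q)e^{\eta u_p}$, the family $\{u_p^q\}_p$ is equi-integrable for every $q\ge1$, so Vitali's theorem upgrades the convergence to $L^q(\Omega)$ for every $q<\infty$.

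The second step, which I regard as the main obstacle, is the $L^\infty$-bound \eqref{bounded}; this is where the very specific definition of $g_p$ enters. Since $g_p(s)=0$ for $s\ge\tilde c+p-1$ and $p<2$, for every $k\ge\tilde c+1$ the function $G_k(u_p)\in W^{1,p}_0(\Omega)$ is supported in $\{u_p\ge k\}\subset\{g_p(u_p)=0\}$. Testing \eqref{pbp} with $G_k(u_p)$ therefore makes the gradient term disappear and the resulting inequality
\[
\int_\Omega|\nabla G_k(u_p)|^p=\int_\Omega h_p(u_p)f\,G_k(u_p)\le h_k(\infty)\|f\|_{L^N(\Omega)}\|G_k(u_p)\|_{L^{N/(N-1)}(\Omega)}
\]
is formally identical to \eqref{stimaulim1dgop}. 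From here the same Sobolev/Young chain as in the proof of Lemma \ref{lemmalim}, together with the smallness assumption $\|f\|_{L^N}\mathcal S_1 h(\infty)<1$ allowing one to pick $\tilde c$ so that $\mathcal{S}_1^{-1}-\|f\|_{L^N}h_{\tilde c}(\infty)>C>0$, gives $C\|G_{\tilde c}(u_p)\|_{L^{N/(N-1)}}\le (p-1)|\Omega|/p$. Lower semicontinuity of the $L^{N/(N-1)}$-norm under the a.e.\ convergence of Step 1 then yields $\|G_{\tilde c}(u)\|_{L^{N/(N-1)}(\Omega)}=0$, i.e.\ $u\le\tilde c$ a.e., which is exactly \eqref{bounded}.

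For the last claim, the $BV_{\rm loc}$-bound on $\Gamma_{p,p}(u_p)$, the $L^1_{\rm loc}$ part follows from the linear growth of $\Gamma_{p,p}$ (a consequence of $\theta<1$ together with the truncation of $g_p$, which makes the constants uniform in $p$) and the $L^1$-bound on $u_p$ already established. For the gradient, the concave Young inequality $x^{1/p}\le x/p+(p-1)/p$ applied pointwise to $x=g_p(u_p)|\nabla u_p|^p$ gives, for every $\omega\subset\subset\Omega$,
\[
\int_\omega|\nabla\Gamma_{p,p}(u_p)|=\int_\omega g_p^{1/p}(u_p)|\nabla u_p|\le\frac{1}{p}\int_\omega g_p(u_p)|\nabla u_p|^p+\frac{p-1}{p}|\omega|.
\]
I would control the first summand by testing \eqref{pbp} with $\varphi\in C^1_c(\Omega)$, $0\le\varphi\le1$, $\varphi\equiv1$ on $\omega$: discarding the non-negative $h_p$-term yields $\int g_p(u_p)|\nabla u_p|^p\varphi\le \int|\nabla u_p|^{p-2}\nabla u_p\cdot\nabla\varphi$, whose right-hand side is bounded via Hölder by $\|\nabla u_p\|_{L^p(\Omega)}^{p-1}\|\nabla\varphi\|_{L^p(\Omega)}$, and the uniform $L^p$-bound on $\nabla u_p$ is read off \eqref{stimap} with $\eta=1$ (using $e^{pu_p}\ge 1$). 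This closes the proof of the lemma.
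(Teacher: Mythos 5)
Your argument follows essentially the same route as the paper for the $BV$ bound, the $L^q$ convergence, and the $BV_{\rm loc}$ control of $\Gamma_{p,p}(u_p)$ (the chain rule plus concave Young to reduce to $\int g_p(u_p)|\nabla u_p|^p\varphi$, then discard the $h_p$ term and use the uniform $\|\nabla u_p\|_{L^p}$ bound coming from \eqref{stimap}). Those pieces are correct.

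There is, however, a genuine gap in your derivation of the $L^\infty$ bound. To make the gradient term vanish when testing \eqref{pbp} with $G_k(u_p)$ you need $g_p(u_p)\,G_k(u_p)\equiv 0$, i.e.\ $k\ge\tilde c+p-1$. You take the $p$-independent level $k\ge\tilde c+1$; this does kill the gradient term, but then the inequality you obtain is
\[
C\,\|G_k(u_p)\|_{L^{N/(N-1)}(\Omega)}\le \frac{p-1}{p}|\Omega|,\qquad k\ge\tilde c+1,
\]
and letting $p\to1^+$ with $k$ fixed only gives $G_{\tilde c+1}(u)=0$, that is $\|u\|_{L^\infty(\Omega)}\le\tilde c+1$, not \eqref{bounded}. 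The sentence ``gives $C\|G_{\tilde c}(u_p)\|_{L^{N/(N-1)}}\le(p-1)|\Omega|/p$'' does not follow from what precedes it: with $k=\tilde c$ the gradient term does \emph{not} disappear since $g_p>0$ on $(\tilde c,\tilde c+p-1)$. The paper avoids this by using the $p$-dependent level $k_p:=\tilde c+p-1$, which is the smallest $k$ for which $g_p\,G_k\equiv0$; then $h_{k_p}(\infty)\le h_{\tilde c}(\infty)$ still gives the coercive constant, and since $k_p\downarrow\tilde c$ one has $G_{k_p}(u_p)\to G_{\tilde c}(u)$ a.e.\ along the subsequence, so lower semicontinuity yields the sharp bound $\|u\|_{L^\infty(\Omega)}\le\tilde c$. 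Replacing your fixed $k\ge\tilde c+1$ by $k_p=\tilde c+p-1$ repairs the proof.
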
	
\begin{proof}
	As an easy consequence of \eqref{stimabv}, $u_p$ is also bounded in $BV(\Omega)$.  Hence a standard compactness argument allows to deduce that there exists $u\in BV(\Omega)$ such that, up to subsequences, $ u_p$ converges to $u$ in $L^q(\Omega)$ for every $q<\frac{N}{N-1}$. Moreover $\nabla u_p$ converges to $D u$ locally *-weakly as measures as $p\to 1^+$. Observe that  \eqref{stimabv} also implies the strong convergence of  $e^{\eta u_p}-1$  in $L^1(\Omega)$ and so it follows that $u_p$ converges to $u$   in $L^q(\Omega)$ for any $q<\infty$ as $p\to 1^+$.

	Now we take a nonnegative $\varphi\in C^1_c(\Omega)$ as a test function in the distributional formulation of \eqref{pbp}, we get rid of the term involving $h$ and we apply the chain rule and the Young inequality, yielding to
	\begin{equation*}
	\int_\Omega |\nabla \Gamma_{p,p}(u_p)|\varphi  \le \int_\Omega |\nabla u_p|^{p-1}|\nabla \varphi| +\frac{p-1}{p}\int_\Omega \varphi \le \int_\Omega |\nabla u_p|^{p} + \int_\Omega |\nabla \varphi|^p + \frac{p-1}{p}\int_\Omega \varphi \le  C,
	\end{equation*}
	which, jointly with the fact that $\Gamma_{p,p}(u_p)$ is bounded in $L^\infty(\Omega)$,
	 implies that $\Gamma_{p,p}(u_p)$ is bounded in $BV_{\rm loc}(\Omega)$ with respect to $p$.	
	\\ It remains to show that \eqref{bounded} holds. Let $k_p= \tilde{c}+p-1$ and let take $G_{k_p}(u_p)$ as a test function in \eqref{pbp} obtaining
	\begin{equation}\label{stimaulim1}
	\int_\Omega|\nabla G_{k_p}(u_p)|^p = \int_\Omega h_p(u_p)f G_{k_p}(u_p) \le h_{k_p}(\infty) ||f||_{L^N(\Omega)} ||G_{k_p}(u_p)||_{L^{\frac{N}{N-1}}(\Omega)}.
	\end{equation}
	Now we apply the Young and the Sobolev inequalities on the left hand side of \eqref{stimaulim1} yielding to 
	\begin{equation*}\label{stimaulim2}
	\mathcal{S}_1^{-1} ||G_{k_p}(u_p)||_{L^{\frac{N}{N-1}}(\Omega)} \le h_{k_p}(\infty) ||f||_{L^N(\Omega)} ||G_{k_p}(u_p)||_{L^{\frac{N}{N-1}}(\Omega)} + \frac{p-1}{p}|\Omega|.
	\end{equation*} 
	We recall that $\tilde{c}$ is such that  $\mathcal{S}_{1}^{-1} -||f||_{L^N(\Omega)} h_{\tilde{c}}(\infty)>C>0$ for a constant $C$ which is independent of $p$ near $1$.
	Now observe that  $\mathcal{S}_{1}^{-1} -||f||_{L^N(\Omega)} h_{k_p}(\infty) > \mathcal{S}_{1}^{-1}  -||f||_{L^N(\Omega)} h_{\tilde{c}}(\infty) $ and then 
	\begin{equation*}
	C ||G_{k_p}(u_p)||_{L^{\frac{N}{N-1}}(\Omega)} \le  \frac{p-1}{p}|\Omega|,
	\end{equation*} 
whence, taking $p\to1^+$ and by weak lower semicontinuity of the norm, one has $||G_{\tilde{c}}(u)||_{L^{\frac{N}{N-1}}(\Omega)}=0$, namely $u\le \tilde{c}$ almost everywhere in $\Omega$.  The proof is concluded.
\end{proof}

\subsection{The limit $u$ has no jumps}\label{3.3}

We now prove the following  result  in which we  show that $D u$ has no jump part.
\begin{lemma}\label{lemmasalto}
	Let $u \in BV_{\rm loc}(\Omega) \cap L^{\infty} (\Omega)\bk$, let $z\in \DM_{\rm loc}(\Omega)$,  $\Gamma: [0,\infty)\mapsto  [-\infty,+\infty)$ is a  increasing \bk  continuous function such that $\Gamma^{-1}\in C^{0,1}_{\rm loc}([-\infty,+\infty))$ and that $\Gamma(u)\in BV_{\rm loc}(\Omega)$. Let $\tilde{f} \in L^1_{\rm loc} (\Omega)$ and let also assume that
	$$-\operatorname{div}z \ge |D\Gamma (u)| + \tilde{f} \ \text{as measures in }\Omega,$$
	then $D^j u=0$.
\end{lemma}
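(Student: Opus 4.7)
The plan is to first reduce the statement to showing that $D^j w = 0$ where $w := \Gamma(u)$, and then to derive this from the hypothesis by comparing the singular parts of the measures involved on the jump set of $w$, via the Anzellotti-Chen-Frid normal-trace description of $\operatorname{div} z$.

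For the reduction I will use that $\Gamma$ is strictly increasing and continuous, so that $J_u = J_w$ (up to $\mathcal H^{N-1}$-null sets) with matching orientation, and $u^\pm = \Gamma^{-1}(w^\pm)$ $\mathcal H^{N-1}$-a.e.\ on $J_u$. Since $u \in L^\infty(\Omega)$, on each $\Omega' \subset\subset \Omega$ the values $w^\pm$ lie in a subset of $[-\infty,+\infty)$ on which $\Gamma^{-1}$ is Lipschitz; the nuisance case $\Gamma(0) = -\infty$ will be handled via a preliminary truncation $T_M w \to w$, using $\Gamma^{-1}(-\infty) = 0$. This gives $|u^+ - u^-| \le L\, |w^+ - w^-|$ on $J_u$ and hence $|D^j u| \le L\, |D^j w|$, so it is enough to prove $|D^j w| = 0$.

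The core argument proceeds by extracting singular parts. Rewriting the hypothesis as $\mu := -\operatorname{div} z - |Dw| - \tilde f \ge 0$ and decomposing each term into its $\mathcal L^N$-absolutely continuous and $\mathcal L^N$-singular parts, and using that $\tilde f \in L^1_{\rm loc}(\Omega)$ is $\mathcal L^N$-absolutely continuous, one obtains $(-\operatorname{div} z)^s \ge |D^c w| + |D^j w|$. Restricting to the $(N-1)$-rectifiable set $J_w$ (where the Cantor part carries no mass) yields the decisive inequality
$$(-\operatorname{div} z) \res J_w \ \ge\ (w^+ - w^-)\, \mathcal H^{N-1} \res J_w.$$
The Chen-Frid trace theorem identifies $(-\operatorname{div} z) \res J_w = ([z,\nu_w]^- - [z,\nu_w]^+) \mathcal H^{N-1} \res J_w$ in terms of one-sided normal traces bounded by $\|z\|_{L^\infty}$. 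Combining this with the Anzellotti identity $\operatorname{div}(w z) = w^* \operatorname{div} z + (z, Dw)$ (Lemma \ref{l-ms} applied with $v \equiv 1$) and with the pointwise bound $(z, Dw) \le \|z\|_{L^\infty} |Dw|$, together with $w^* = \tfrac{1}{2}(w^+ + w^-)$ on $J_w$, I expect to conclude that the resulting chain of inequalities on $J_w$ is compatible only if $w^+ = w^-$ $\mathcal H^{N-1}$-a.e.\ on $J_w$, whence $|D^j w| = 0$ and $D^j u = 0$.

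The hard part will be precisely this last compatibility step: turning a single scalar inequality of measures into the pointwise equality of the one-sided traces of $w$ on $J_w$. It requires the joint use of the Anzellotti identity (which introduces the precise representative $w^* = \tfrac{1}{2}(w^+ + w^-)$) and the Chen-Frid normal-trace formula for $\operatorname{div} z$ across rectifiable sets, combined with the pointwise control on the pairing $(z, Dw)$. Everything else---the reduction to $|D^j w| = 0$ via the local Lipschitz character of $\Gamma^{-1}$ and the absorption of the singular case $\Gamma(0) = -\infty$ by a truncation argument---will be fairly routine.
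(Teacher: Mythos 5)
The reduction to $D^j w=0$ (with $w=\Gamma(u)$) via the local Lipschitz regularity of $\Gamma^{-1}$ is correct and matches the paper, as does the extraction of the singular inequality $(-\operatorname{div}z)\res J_w \ge (w^+-w^-)\,\mathcal H^{N-1}\res J_w$. The gap is in the step you yourself flag as ``the hard part'': the constraints you list do \emph{not} force $w^+=w^-$. Concretely, write $(\operatorname{div}z)\res J_w = ([z,\nu]^+-[z,\nu]^-)\mathcal H^{N-1}\res J_w$ for the one--sided traces, and note that the Anzellotti identity you invoke gives, on $J_w$,
\begin{equation*}
(z,Dw)\res J_w = \frac{w^+-w^-}{2}\bigl([z,\nu]^+ + [z,\nu]^-\bigr)\,\mathcal H^{N-1}\res J_w,
\end{equation*}
because the $w^*\operatorname{div}z$ contribution and the jump trace of $wz$ combine so that only the \emph{sum} of the one-sided traces survives. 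Taking $[z,\nu]^\pm=\mp\|z\|_\infty$ produces $(z,Dw)\res J_w = 0$, the density of $(-\operatorname{div}z)\res J_w$ equal to $2\|z\|_\infty$, and all of your inequalities satisfied with a jump as large as $w^+-w^-=2\|z\|_\infty$. So the ``chain of inequalities'' as you have assembled it is compatible with a nontrivial jump (this is the picture of $z$ having a sink across $J_w$), and the argument does not close.

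What the paper actually uses, and what is missing from your plan, is the test function $(\Gamma(u)-l)^*$ with a fixed constant $l>\Gamma(\|u\|_{L^\infty(\Omega)})+2$: one multiplies the hypothesis inequality by this (negative) function, localizes on a shrinking tubular neighbourhood $U_n$ of a patch $U\cap\xi_k$ of $J_{\Gamma(u)}$, applies the Green formula \eqref{green} and $(z,D\Gamma(u))\ge -|D\Gamma(u)|$, and then sends $n\to\infty$. The boundary terms collapse (after a delicate one-sided trace computation, ``reasoning as in Lemma~4 of \cite{ads}'', which is itself the deep part here) to $\int_{U\cap\xi_k}(\Gamma(u)^+-\Gamma(u)^-)[z,\nu]$, which is controlled by $\int_{U\cap\xi_k}|D\Gamma(u)|$ since $|[z,\nu]|\le 1$. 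Because the other side carries the factor $(l-\Gamma(u)-1)^*>1$, one ends up with $\int_{U\cap\xi_k}(l-\Gamma(u)-2)^*|D\Gamma(u)|\le 0$, and as $(l-\Gamma(u)-2)^*>0$ on the patch this forces $|D\Gamma(u)|(U\cap\xi_k)=0$. It is precisely the strict gap between the coefficient $(l-\Gamma(u)-1)^*>1$ obtained from testing against $(\Gamma(u)-l)^*$ and the coefficient $1$ coming from the trace bound that makes the argument go through; without it the singular inequality alone, combined with the Anzellotti identity and the pairing bound, is genuinely under-determined. You would need to reproduce this ``test against $(\Gamma(u)-l)^*$ and pass through Green on a shrinking cylinder'' mechanism (and to be careful about the one-sided nature of the normal traces of $z$ across $J_{\Gamma(u)}$, which is what \cite{ads} is invoked for) to complete the proof.
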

\begin{proof}
	First of all we prove that $D^j \Gamma(u)=0$. Here we follow the proof of Lemma $4$ of \cite{ads}, sketching it for the sake of completeness. Indeed, since $\Gamma(u)\in BV_{\rm loc}(\Omega)$, by Theorem $3.78$ of \cite{afp}, $S_{\Gamma(u)}$ is (locally)  countably $\mathcal{H}^{N-1}$-rectifiable and then there exist regular hypersurfaces $\xi_k$ such that
	$$\displaystyle \mathcal{H}^{N-1}\left( S_{\Gamma(u)} \setminus \displaystyle \bigcup_{k=1}^{\infty} \xi_k\right) =0.$$
	Hence we will just need to show that, for any $k\in \mathbb{N}$, $|D^j \Gamma(u)|(\xi_k)=0$.
	\\
	In particular the proof will be done once one shows that for any $x_0\in \xi_k$ there exists an open neighbourhood $U$ such that $x_0\in U$ with $U\cap \xi_k \subset\subset \Omega$ and 
	$|D^{j}\Gamma(u)|(U\cap \xi_k)=0$. In order to prove it, let $U$ an open set such that $U\cap \xi_k \subset\subset \Omega$ and  consider the following open  cylinder 
	$$U_n:=\{x+t \tilde{\nu}(x): x\in U\cap \xi_k, |t|<\frac{1}{n}\}, \ \ n\ge n_0,$$
	where $\tilde{\nu}$ is the orientation of $\xi_k$ and $n_0$ is fixed such that $d(\overline{U\cap \xi_k},\partial \Omega)>\frac{1}{n_0}$ ($d$ is the usual distance function). We observe that $U_n$ is regular and such that $\cap_{n\ge n_0}U_n= U\cap \xi_k$.
	\\
	Now let observe that, for some $l>\Gamma(||u||_{L^\infty(\Omega)})+2$, one has that 
	\begin{equation*}
	-\int_{U_n} (\Gamma(u)- l)^* \operatorname{div}z \le \int_{U_n} (\Gamma(u)- l)^* |D \Gamma(u)| + \int_{U_n} \tilde{f} (\Gamma(u)- l).  
	\end{equation*}
	By the Green formula \eqref{green} and by the fact that $(z,D\Gamma(u))\ge -|D\Gamma(u)|$ one also has that  
	\begin{equation}\label{saltonullo0}
	\int_{U_n} (l- \Gamma(u)-1)^*|D\Gamma(u)| - \int_{\partial U_n } (\Gamma(u)-l)[z,\nu] \le \int_{U_n} \tilde{f} (\Gamma(u)- l).  
	\end{equation}
	One can simply take $n\to \infty$ in the first and the third term of the previous. Moreover, since $|\xi_k|=0$, one has that
	$$\lim_{n\to\infty}\int_{U_n} \tilde{f} (\Gamma(u)- l) = \int_{U\cap \xi_k} \tilde{f} (\Gamma(u)- l) = 0.$$
	Finally one can decompose $\partial U_n = E^+_n \cup E_n^- \cup E_n^0$ where $E^+_n:=\{x+\frac{1}{n}\tilde{\nu}: x\in U\cap \xi_k\}$, $E^-_n:=\{x-\frac{1}{n}\tilde{\nu}: x\in U\cap \xi_k\}$ and $E_n^0$ is the "lateral surface" of $U_n$. Reasoning as in Lemma $4$ of \cite{ads} one can prove that
	\begin{equation*}
	\lim_{n\to\infty}\int_{E_n^0} (\Gamma(u)-l)[z,\nu]=0,
	\end{equation*}
	\begin{equation*}
	\lim_{n\to\infty}\int_{E_n^+} (\Gamma(u)-l)[z,\nu]= \int_{U\cap\xi_k} (\Gamma(u)^+ -l) [z,\nu],
	\end{equation*}
	and 
	\begin{equation*}
	\lim_{n\to\infty}\int_{E_n^-} (\Gamma(u)-l)[z,\nu]= \int_{U\cap\xi_k} (\Gamma(u)^- - l) [z,-\nu].
	\end{equation*}
	The previous three equations imply that
	\begin{equation}\label{saltonullo}
	\lim_{n\to\infty}\int_{\partial U_n } (\Gamma(u)-l)[z,\nu]= \int_{U\cap\xi_k} (\Gamma(u)^+ -\Gamma(u)^-) [z,\nu].
	\end{equation}
	Hence from \eqref{saltonullo0} and \eqref{saltonullo} one gets
	\begin{equation*}\label{saltonullo2}
	\int_{U\cap\xi_k} (l- \Gamma(u)-1)^*|D\Gamma(u)| \le \int_{U\cap\xi_k} (\Gamma(u)^+ -\Gamma(u)^-) [z,\nu] \le \int_{U\cap\xi_k} \left|\Gamma(u)^+-\Gamma(u)^-\right| = \int_{U\cap\xi_k} |D\Gamma(u)|,  
	\end{equation*}
	which gives that
	\begin{equation*}\label{saltonullo3}
	\int_{U\cap\xi_k} (l- \Gamma(u)-2)^*|D\Gamma(u)| \le 0,  
	\end{equation*}
	namely $|D^j \Gamma(u)|(U\cap \xi_k)=0$. At this point an application of the chain rule (see Theorem $3.96$ of \cite{afp}) gives $|D^j u|=0$ (recall that $\Gamma^{-1}\in C^{0,1}_{\rm loc}([-\infty,+\infty))$),
	indeed one has
	$$ 
	D^j u = (\Gamma^{-{1}}(\Gamma(u)^+) - \Gamma^{-{1}}(\Gamma(u)^-))\, \nu \mathcal H^{N-1}\res J_{\Gamma(u)} = 0.
	$$ 	
	This concludes the proof.
\end{proof}

\subsection{Proofs completed}\label{3.4}

In this Section we shall provide the proof of Theorem \ref{teomain} that, as a consequence, will give us  that also Theorem  \ref{teoreg} holds.

\begin{proof}[Proof of Theorem \ref{teomain}]

 Let $u_p$ be a solution to \eqref{pbp} then it follows from Lemma \ref{corou} that there exists a bounded function $u\in BV(\Omega)$ such that  $u_p$ converges to $u$ in $L^q(\Omega)$ for every $q<\infty$ and $\nabla u_p$ converges to $Du$ locally *-weakly as measures\bk. Furthermore from Lemma \ref{corou} and from a weak lower semicontinuity argument one deduces that $\Gamma(u)\in BV_{\rm loc}(\Omega)$. 
We will carry on the proof by claims.
\medskip

{\it The term $h(u)f
	\in L^1_{\rm loc}(\Omega)$.}\medskip

Let us take a nonnegative $\varphi\in C^1_c(\Omega)$ as a test function in the distributional formulation of \eqref{pbp}. Then, getting rid of the nonnegative gradient term, Lemma \ref{lemmastima} and the Young inequality yield to
\begin{equation}\label{stimaL1loc}
\int_\Omega h_p(u_p)f\varphi \le \int_\Omega |\nabla u_p|^{p-2} \nabla u_p\cdot \nabla \varphi \le  \int_\Omega |\nabla u_p|^{p} + \int_\Omega |\nabla \varphi|^p \le C,
\end{equation}  
where $C$ is a positive constant independent of $p\in (1,p_0)$.
\\ The Fatou Lemma as $p\to 1^+$ in \eqref{stimaL1loc} gives 
\begin{equation}\label{l1loc}
\int_\Omega h(u)f\varphi \le C,
\end{equation} 
namely $h(u)f\in L^1_{\rm loc}(\Omega)$. We also underline that, in case $h(0)=\infty$, \eqref{l1loc} gives 
	\begin{equation*}
		\{u = 0\} \subset \{f = 0\},
	\end{equation*}
	up to a set of zero Lebesgue measure which implies that $u>0$ almost everywhere in $\Omega$ since $f>0$ almost everywhere in $\Omega$.
	\medskip
	
	{\it Existence of $z$.}\medskip

The existence of $z$ is standard and we recall it for the sake of completeness.\\
Let $1\le q<\frac{p}{p-1}$ then from Lemma \ref{lemmastima} and from the H\"older inequality one has
\begin{align}\label{stimaz}
\left(\int_\Omega ||\nabla u_p|^{p-2}\nabla u_p|^q\right)^{\frac{1}{q}} \le \left(\int_\Omega |\nabla u_p|^p\right)^{\frac{p-1}{p}}|\Omega|^{\frac{1}{q}-\frac{p-1}{p}}\leq C^{\frac{p-1}{p}}|\Omega|^{\frac{1}{q}-\frac {p-1}{p}}.		
\end{align}
This implies the existence of a vector field $z_q\in L^q(\Omega)^N$ such that $|\nabla u_p|^{p-2}\nabla u_p$ converges weakly to $z_q$ in $L^q(\Omega)^N$ and, through a diagonal argument, one obtains the existence of a unique vector field $z$, independent of $q$, such that $|\nabla u_p|^{p-2}\nabla u_p$ converges weakly to $z$ in $L^q(\Omega)^N$ for any $q<\infty$. Finally, taking $p\to 1^+$, one gets by weak lower semicontinuity in \eqref{stimaz} that $||z||_{L^q(\Omega)^N}\le |\Omega|^{\frac1q}$ and letting $q\to \infty$ one deduces $||z||_{L^\infty(\Omega)^N}\le 1$.
	
		\medskip
	
	{\it Proof that $D^j u$ = 0.}\medskip

We take a nonnegative $\varphi \in C^1_c(\Omega)$ as a test function in \eqref{pbp}, after an application of Young's inequality we use   lower semicontinuity and the Fatou Lemma as $p\to 1^+$ in order to  obtain that  $z\in \mathcal{D}\mathcal{M}^\infty_{\rm{loc}}(\Omega)$ and that 
\begin{equation}\label{soprasol}
-\operatorname{div}z \ge  |D \Gamma(u)| +  h(u)f \ \ \text{as measures in }\Omega, 
\end{equation}
where we exploited that $\Gamma_{p,p}(u_p)$ is locally bounded in $BV(\Omega)$ and $\Gamma_{p,p}(u_p)$ converges almost everywhere to $\Gamma(u)$. Hence we are in position to apply Lemma \ref{lemmasalto}  deducing that $D^j u=0$.

	\medskip
		{\it Distributional formulation \eqref{def_distrp=1} and identification of the vector field by \eqref{def_zp=1}.}
	\medskip\\
Let $0\le \varphi \in C^1_c(\Omega)$ and take $e^{\Gamma_{p,1}(u_p)} \varphi$ as a test function in the weak formulation of \eqref{pbp} obtaining after cancellations
\begin{equation}\label{peresp}
\begin{aligned}
	\int_{\Omega} |\nabla u_p|^{p-2}\nabla u_p\cdot \nabla \varphi e^{\Gamma_{p,1}(u_p)}
	= \int_\Omega h_p(u_p)fe^{\Gamma_{p,1}(u_p)}\varphi.
\end{aligned}	
\end{equation}
Hence taking $p\to 1^+$ one reaches to (observe that $e^{\Gamma_{p,1}(u_p)}\le C$)
\begin{equation}\label{peresp3}
\begin{aligned}
\int_{\Omega} z\cdot \nabla \varphi e^{\Gamma(u)} 
=  \int_\Omega h(u)fe^{\Gamma(u)}\varphi.
\end{aligned}	
\end{equation}
Indeed if $h(0)<\infty$ then one can simply pass to the limit in \eqref{peresp}. Hence, from here and in order to prove that \eqref{peresp3} holds, we assume that $h(0)=\infty$.
For the right hand side of \eqref{peresp} we write
\begin{equation}\label{rhs}
\int_{\Omega}h_p(u_p)f e^{\Gamma_{p,1}(u_p)} \varphi = \int_{\{u_p\le \delta\}}h_p(u_p)f e^{\Gamma_{p,1}(u_p)}\varphi + \int_{\{u_p> \delta\}}h_p(u_p)f e^{\Gamma_{p,1}(u_p)}\varphi,
\end{equation}
where $\delta>0$ is such that $\delta\not\in  \{k: |\{u=k \}|>0\}$ which is at most a countable set.
\\ We want to pass to the limit in \eqref{rhs} first as $p\to 1^+$ and then as $\delta \to 0$. One has that  
$$h_p(u_p)f e^{\Gamma_{p,1}(u_p)} \varphi\chi_{\{u_p> \delta\}} \le C\sup_{s\in [\delta,\infty)}[h(s)]\ f \varphi \in L^N(\Omega).$$
 Then one can apply the Lebesgue Theorem with respect to $p$ giving that
\begin{equation*}\label{rhs2}
\lim_{p\to 1^+}\int_{\{u_p> \delta\}}h_p(u_p)f e^{\Gamma_{p,1}(u_p)} \varphi= \int_{\{u> \delta\}}h(u)f e^{\Gamma(u)}\varphi.
\end{equation*}
Moreover the Young inequality and Lemma \ref{lemmastima} give that the left hand side of \eqref{peresp} is bounded with respect to $p$, then an application of the Fatou Lemma implies that $h(u)f e^{\Gamma(u)}\in L^1_{\rm loc}(\Omega)$. Hence the Lebesgue Theorem can be applied once more obtaining 
\begin{equation*}\label{rhs22}
\lim_{\delta\to 0}\lim_{p\to 1^+}\int_{\{u_p> \delta\}}h_p(u_p)fe^{\Gamma_{p,1}(u_p)}\varphi= \int_{\{u> 0\}}h(u)fe^{\Gamma(u)}\varphi.
\end{equation*}	
We are left to prove that the first term in the right hand side of \eqref{rhs} vanishes as $p\to 1^+$ and $\delta \to 0$. We take $V_{\delta}(u_p)\varphi$ ($V_{\delta}(s)$ is defined in \eqref{Vdelta}) as test function in the weak formulation of \eqref{pbp}, obtaining 
\begin{equation*}\label{lim1}
\begin{aligned}
\int_{\{u_p\le \delta\}}h_p(u_p)f \varphi\le \int_{\Omega}|\nabla u_p|^{p-2}\nabla u_p\cdot \nabla \varphi V_{\delta}(u_p),
\end{aligned}
\end{equation*}
and one gets
\begin{equation*}\label{lim2}
\begin{aligned}
\limsup_{p\to 1^+}\int_{\{u_p\le \delta\}}h_p(u_p)f \varphi \le \int_{\Omega}z\cdot \nabla \varphi V_{\delta}(u).
\end{aligned}
\end{equation*}
Now we take $\delta\to 0$
\begin{equation}\label{lim3}
\begin{aligned}
\lim_{\delta\to 0}\limsup_{p\to 1^+}\int_{\{u_p\le \delta\}}h_p(u_p)f \varphi \le \int_{\{u=0\}}z\cdot \nabla \varphi = 0,
\end{aligned}
\end{equation}
since $u>0$ almost everywhere in $\Omega$.
Let us also observe that \eqref{lim3} also gives that the first term in \eqref{rhs} vanishes in $\delta, p$ since
\begin{equation*}
\lim_{\delta\to 0}\limsup_{p\to 1^+}\int_{\{u_p\le \delta\}}h_p(u_p)fe^{\Gamma_{p,1}(u_p)}\varphi \le \lim_{\delta\to 0}\limsup_{p\to 1^+} C\int_{\{u_p\le \delta\}}h_p(u_p)f\varphi = 0.
\end{equation*}
Hence this implies 
\begin{equation*}
\lim_{p\to 1^+}\int_{\Omega}h_p(u_p)fe^{\Gamma_{p,1}(u_p)}\varphi = \int_{\{u>0\}}h(u)fe^{\Gamma(u)}\varphi = \int_{\Omega}h(u)fe^{\Gamma(u)}\varphi,
\end{equation*}
and that \eqref{peresp3} holds.\\
Observe that   $ze^{\Gamma(u)} \in \DM_{\rm loc}(\Omega)$ and \eqref{peresp3} implies 
\begin{equation}\label{zpere}
	-\operatorname{div}\left(ze^{\Gamma(u)} \right) = h(u)fe^{\Gamma(u)}  \ \ \text{as measures in }\Omega.
\end{equation}
Moreover one has that as measures
\begin{equation}\label{proofdistr}
\begin{aligned}
{e^{\Gamma(u)}} |D \Gamma(u)| &\stackrel{\eqref{soprasol}}{\le} {e^{\Gamma(u)}}\left(-\operatorname{div}z -h(u)f\right) \stackrel{\eqref{zpere}}{=} -{e^{\Gamma(u)}}\operatorname{div}z + \operatorname{div}\left(ze^{\Gamma(u)}\right) 
\\
&\stackrel{\eqref{dist1}}{=} \left(z, De^{\Gamma(u)}\right) \le | De^{\Gamma(u)}| = {e^{\Gamma(u)}} |D \Gamma(u)|,
\end{aligned}
\end{equation}
which gives that the first inequality in \eqref{proofdistr} is actually an equality, i.e.
$${e^{\Gamma(u)}}\left(-\operatorname{div}z -h(u)f\right) = {e^{\Gamma(u)}} |D \Gamma(u)| \ \text{as measures in }\Omega,$$
and since ${e^{\Gamma(u)}}\ge 1$ one has that
$$-\operatorname{div}z = |D \Gamma(u)| + h(u)f \ \text{as measures in }\Omega.$$
Finally applying Lemma \ref{chainrule}, recalling that $u>0$,  one has that $|D \Gamma(u)|=g(u)|Du|$ which gives that \eqref{def_distrp=1} holds. Furthermore it follows from Lemma \ref{lemmal1} that $z\in \mathcal{D}\mathcal{M}^\infty(\Omega)$. Moreover \eqref{proofdistr} also gives that
 $$\left(z, De^{\Gamma(u)}\right) = | De^{\Gamma(u)}| \ \text{as measures in }\Omega,$$
namely $\theta(z, De^{\Gamma(u)},x)=1$ \ $|D e^{\Gamma(u)}|$-\text{a.e. in } $\Omega$.
 Applying Proposition $2.2$ of \cite{mazonsegura} one gets that
 $$\theta(z, De^{\Gamma(u)},x) = \theta(z, D\Gamma(u),x) \ \ |D\Gamma(u)|-\text{a.e. in } \Omega,$$
 which implies that 
  $$\left(z, D{\Gamma(u)}\right) = | D{\Gamma(u)}| \ \text{as measures in }\Omega,$$
  since $|D{\Gamma(u)}|$ is absolutely continuous with respect to $|D e^{\Gamma(u)}|$.
With the same reasoning one has that
 $$\theta(z, Du,x) = \theta(z, D\Gamma(u),x) \ \ |D\Gamma(u)|-\text{a.e. in } \Omega,$$ 
 which also holds $|Du|-\text{a.e. in } \Omega$ since $g(s)>0$. This proves \eqref{def_zp=1}.
		
			\medskip
		{\it Boundary datum \eqref{def_bordop=1}.}
		\medskip
		
	Let us assume $m\geq 1$ and let us take $(\rho_\epsilon*u^m)\varphi$ as a test function in \eqref{def_distrp=1} where $0\le \varphi \in C^1_c(\Omega)$ and $\rho_\epsilon$ is a sequence of smooth mollifiers.
	Hence one gets 
	$$-\int_\Omega (\rho_\epsilon*u^m)\varphi \operatorname{div}z = \int_{\Omega} (\rho_\epsilon*u^m)g(u)\varphi |Du| + \int_{\Omega}h(u)f(\rho_\epsilon*u^m)\varphi,$$
	and recalling that $u\in BV(\Omega)\cap L^\infty(\Omega)$ one has that $(\rho_\epsilon*u^m)$ converges $\mathcal{H}^{N-1}$ a.e. to $(u^m)^*$ as $\epsilon\to 0$. Moreover $(u^m)^*\le ||u^m||_{L^\infty(\Omega)}$ and this allows to apply the Lebesgue Theorem passing to the limit with respect to $\epsilon$ each term of the previous.
	Hence, recalling that $|D^j u|=0$, one gets
	\begin{equation}\label{peru}
		- u^m \operatorname{div}z = u^m g(u)|Du| + h(u)fu^m \ \ \text{as measures in }\Omega.
	\end{equation} 
	Now we denote by
	$$\tilde{\Gamma}(s)=\int_0^s (m-g(t)t)t^{m-1} \ dt,$$
	and we take $u_p^m$ as a test function in \eqref{pbp} (recall that $u_p$ ha zero Sobolev trace) yielding to
	\begin{equation*}
	\begin{aligned}
	m\int_{\Omega}u_p^{m-1}|\nabla u_p|^p +  \int_{\partial\Omega}\tilde{\Gamma}(u_p) d \mathcal{H}^{N-1} = \int_{\Omega}g_p(u_p)|\nabla u_p|^p u_p^m  + \int_\Omega h_p(u_p)fu_p^m,
	\end{aligned}
	\end{equation*}
	which, after an application of the Young inequality, gives
	\begin{equation}\label{bordo1}
	\begin{aligned}
	\int_{\Omega}(m-g_p(u_p)u_p)u_p^{m-1}|\nabla u_p| - \frac{p-1}{p}\int_{\Omega}(m-g_p(u_p)u_p)u_p^{m-1} +  \int_{\partial\Omega}\tilde{\Gamma}(u_p) d \mathcal{H}^{N-1} \le\int_\Omega h_p(u_p)fu_p^m.
	\end{aligned}
	\end{equation}
	Let us highlight that the second integrand and the last term in \eqref{bordo1} are uniformly bounded with respect to $p$ thanks to Lemma \ref{corou}. Moreover this implies that $\int_0^{u_p} (m-g_p(t)t)t^{m-1} \ dt$ is bounded in $BV(\Omega)$ with respect to $p$ and then one can take $p \to 1^+$ in \eqref{bordo1} by using lower semicontinuity on the left hand side and the strong convergence of $h_p(u_p)u_p^m$ in $L^{\frac{N}{N-1}}(\Omega)$ for the right hand side as $p$ goes to $1$ for any fixed $m\geq 1$. This argument takes to
	\begin{equation}\label{bound}
	\begin{aligned}
	\int_{\Omega}|D \tilde{\Gamma}(u)| +  \int_{\partial\Omega} \tilde{\Gamma}(u) d \mathcal{H}^{N-1} \le\int_\Omega h(u)fu^m \stackrel{\eqref{peru}}{=} - \int_\Omega u^m \operatorname{div}z - \int_\Omega u^m g(u)|Du|.
	\end{aligned}
	\end{equation}	
	An application of the chain rule on the left hand side and an application of the Green formula \eqref{green} on the right hand side of the previous leads, after cancellations, to
	\begin{equation}\label{bordo3}
	\begin{aligned}
	\int_{\Omega}|D u^m| +  \int_{\partial\Omega}\tilde{\Gamma}(u) d \mathcal{H}^{N-1} \le \int_\Omega (z,D u^m) - \int_{\partial\Omega}u^m[z,\nu] d \mathcal{H}^{N-1} \le \int_{\Omega}|D u^m| - \int_{\partial\Omega}u^m[z,\nu] d \mathcal{H}^{N-1},
	\end{aligned}
	\end{equation}	
	where we have used that $[u^mz,\nu] =u^m[z,\nu]$ since $u^m$ belongs to $BV(\Omega)$.
	Hence from \eqref{bordo3} one has that  
	$$\int_{\partial\Omega}\left(\tilde{\Gamma}(u) + u^m[z,\nu]\right) d \mathcal{H}^{N-1}= \int_{\partial\Omega} u^{m-1}\left(u(1+[z,\nu]) - \frac{\displaystyle\int_0^u g(t)t^m \ dt}{u^{m-1}}\right) d \mathcal{H}^{N-1} = 0,$$ 
	which implies that for $x_0\in \partial\Omega$, one has either $u(x_0)=0$ or 
	$$u(1+[z,\nu])\le \frac{ \displaystyle\int_0^u g(t)t^m \ dt}{u^{m-1}},$$
	and, taking $m\to\infty$, one deduces that $u(1+[z,\nu])=0$ $\mathcal{H}^{N-1}$ almost everywhere on $\partial\Omega$. This concludes the proof.
\end{proof}

\section{The critical case $\theta =1$}\label{strongly}

In this section we analyze the critical case in which \eqref{g1}  is satisfied with $\theta=1$; again here in \eqref{h1} we consider $\gamma\le 1$.   
This case is critical in the sense that, in general, we lose coercivity.  In order to recover a priori estimates on the approximating solutions here we will need to further assume some control on the function $g$ and a stronger positivity of the datum $f$; the  interplay between $g$, $h$ and $f$ we shall consider  seems to be  not only technical  as it will be discussed below.

We  need to modify the definition of $\Gamma$ as follows: 
\begin{equation}\label{gamma1}
\Gamma(s)= \int_{1}^{s} g(t) dt.
\end{equation}

Observe that $\Gamma(s)$ defined by \eqref{gamma1} may blow up as $s$ approaches zero; prototypical example in the model case being a logarithm type growth.

 Our first additional assumption is the following: 
\begin{equation}\label{f}
\forall\omega\subset\subset \Omega \ \ \ \exists c_\omega>0: f\ge c_\omega>0 \text{ a.e. in } \omega.
\end{equation}
Moreover we ask the function $g$ to be somehow controlled by the function $h$ near zero. More precisely we assume
\begin{equation}\label{gh}
\displaystyle \limsup_{s\to 0} \frac{|\Gamma (s)|}{h(s)}<\infty.
\end{equation}
Let us just remark that \eqref{gh} implies that if $\Gamma$  blows up  at the origin (e.g. in the model case  $g(s)=s^{-1}$) then also  $h$ needs to. 

\medskip 
Our main result of this section is the following:
\begin{theorem}\label{teomain2}
	Let $f\in L^N(\Omega)$ satisfy  \eqref{f}, and let \eqref{g1}, \eqref{h1} and \eqref{gh}, with $\theta=1$ and $\gamma\le 1$, be in force. If for some $\delta>0$ one has that \begin{equation}\label{condiz} \displaystyle \max\left(\max_{s\in [0,\delta)} [g(s)s], ||f||_{L^N(\Omega)}\mathcal{S}_1h(\infty)\right)<1\,,\end{equation} then there exists a solution $u\in L^\infty(\Omega)$ to problem \eqref{pb} in the sense of Definition \ref{weakdefpositive}.
\end{theorem}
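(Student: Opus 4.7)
The plan is to mirror the proof of Theorem \ref{teomain}, modifying only the two steps in which the strict subcriticality $\theta<1$ played a substantial role. I work again with the approximating problems \eqref{pbp}, where the truncations $g_p,h_p$ are chosen so that $g_p(s)s\le \alpha:=\max_{s\in[0,\delta)}g(s)s<1$ on $[0,\delta)$, which is made available by the first part of \eqref{condiz}. The comparison level $\tilde c$ provided by Lemma \ref{lemmalim} is still available since its construction used only the second part of \eqref{condiz}, and Lemma \ref{lemmasalto} is unaffected as $\Gamma$ from \eqref{gamma1} remains strictly increasing on $(0,\infty)$ with $\Gamma^{-1}$ locally Lipschitz on its range.

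The first obstacle is the a priori estimate corresponding to Lemma \ref{lemmastima}, where the key bound $T_\varepsilon(u_p)/u_p^\theta\le \varepsilon^{1-\theta}$ collapses when $\theta=1$. The substitute is the pointwise inequality $g_p(u_p)T_\varepsilon(u_p)\le\alpha$ on $\{u_p\le \delta\}$, valid for every $\varepsilon\le \delta$, which after testing \eqref{pbp} with $T_\varepsilon(u_p)$ produces
$$\int_{\{u_p\le \delta\}} g_p(u_p)|\nabla u_p|^p T_\varepsilon(u_p)\le \alpha \int_{\{u_p\le \delta\}} |\nabla u_p|^p.$$
The factor $\alpha<1$ is exactly the room needed for absorption once the middle-range contribution $\int_{\{\varepsilon<u_p\le \tilde k\}} |\nabla u_p|^p$ has been independently controlled via the exponential test function $e^{\eta p T_{\tilde k-\varepsilon}(G_\varepsilon(u_p))}-1$, an argument that only uses the boundedness of $g$ on $[\varepsilon,\tilde k]$ and therefore carries over unchanged from Lemma \ref{lemmastima}. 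The analog of \eqref{stimap} follows, and via Lemma \ref{corou} one obtains a limit $u\in BV(\Omega)\cap L^\infty(\Omega)$ with $\|u\|_\infty\le \tilde c$ and $D^j u=0$.

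The main obstacle, and the reason for the auxiliary assumptions \eqref{f} and \eqref{gh}, appears in the passage to the limit. Taking $e^{\Gamma_{p,1}(u_p)}\varphi$ as test function still produces, after cancellation, the analog of \eqref{peresp}. What is new is that $\Gamma(s)\to -\infty$ as $s\to 0^+$ is now permitted (this is the critical logarithmic scenario), so $e^{\Gamma(s)}$ degenerates at zero while $h_p(u_p)$ simultaneously blows up there. The boundedness of the product is precisely the role of \eqref{gh}: the bound $|\Gamma(s)|\le C h(s)$ near zero yields $h(s)e^{\Gamma(s)}\le h(s)e^{-|\Gamma(s)|}\le C$ as $s\to 0^+$, so that the decomposition \eqref{rhs} and the Lebesgue dominated convergence scheme of the proof of Theorem \ref{teomain} still apply. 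The strict positivity assumption \eqref{f}, combined with the $L^1_{\mathrm{loc}}$ bound on $h(u)f$ obtained as in the proof of Theorem \ref{teomain}, gives $u>0$ a.e.\ in $\Omega$, which is needed both to identify $\lim_{p\to 1^+}\int_{\{u_p>\delta\}} h_p(u_p)fe^{\Gamma_{p,1}(u_p)}\varphi$ and, together with the $V_\delta$-trick, to discard the contribution of $\{u_p\le \delta\}$ in the double limit $p\to 1^+,\ \delta\to 0$.

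From here the remainder of the argument is a verbatim reproduction of the proof of Theorem \ref{teomain}: the chain \eqref{proofdistr} yields $-\operatorname{div} z=|D\Gamma(u)|+h(u)f$ as measures, Lemma \ref{chainrule} rewrites $|D\Gamma(u)|$ as $g(u)|Du|$ (using $u>0$ a.e.), Lemma \ref{lemmal1} upgrades $z$ to $\mathcal{D}\mathcal{M}^\infty(\Omega)$, the comparison of $\theta$-densities of Proposition 2.2 of \cite{mazonsegura} gives $(z,Du)=|Du|$, and the boundary condition \eqref{def_bordop=1} follows by testing \eqref{pbp} with $u_p^m$ and letting $m\to\infty$.
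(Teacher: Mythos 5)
Your overall scheme is the paper's: the absorption via $\max_{s\in[0,\delta)}[g(s)s]<1$ in the test with $T_\delta(u_p)$, the exponential test functions for the middle and upper ranges, and the passage to the limit with $e^{\Gamma_{p,1}(u_p)}\varphi$ all match Lemma \ref{lemmastimastrong} and the paper's proof. However, your explanation of the role of \eqref{gh} contains a genuine error. From $|\Gamma(s)|\le C h(s)$ near zero you cannot conclude $h(s)e^{\Gamma(s)}=h(s)e^{-|\Gamma(s)|}\le C$: the hypothesis bounds $|\Gamma(s)|$ from \emph{above} by $h(s)$, which only yields the \emph{lower} bound $e^{-|\Gamma(s)|}\ge e^{-Ch(s)}$ and gives no upper control on the product. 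Take $g\equiv 1$ and $h(s)=s^{-1/2}$: then \eqref{g1}, \eqref{h1}, \eqref{condiz} and \eqref{gh} all hold, yet $h(s)e^{\Gamma(s)}=s^{-1/2}e^{s-1}\to\infty$ as $s\to 0^+$. Fortunately this boundedness is not what the limit passage needs: it uses only the \emph{upper} bound $e^{\Gamma_{p,1}(u_p)}\le C$, which holds because $\Gamma_{p,1}$ is increasing and $g_p$ vanishes above $\tilde c+p-1$, combined with the same $\{u_p\le\delta\}$ versus $\{u_p>\delta\}$ splitting, Fatou, and the $V_\delta$ trick as in the subcritical case.

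The step you leave unjustified is precisely the one \eqref{gh} and \eqref{f} are designed for: proving that $\Gamma_{p,p}(u_p)$ is bounded in $L^1_{\rm loc}(\Omega)$, hence in $BV_{\rm loc}(\Omega)$, so that $\Gamma(u)\in BV_{\rm loc}(\Omega)$. When $\theta=1$ the function $\Gamma$ of \eqref{gamma1} may tend to $-\infty$ at the origin, so this no longer follows from the gradient estimate plus the $L^\infty$ bound on $u$ as it did for $\theta<1$; yet it is a hypothesis of Lemma \ref{lemmasalto} (which you declare ``unaffected'', and which is what actually gives $D^ju=0$, not Lemma \ref{corou}) and of the chain rule Lemma \ref{chainrule}, and without it the measure $|D\Gamma(u)|$ appearing in the inequality $-\operatorname{div}z\ge |D\Gamma(u)|+h(u)f$ is not even defined. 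The missing argument is short: testing \eqref{pbp} with $0\le\varphi\in C^1_c(\omega)$ shows $h_p(u_p)f$ is bounded in $L^1(\omega)$; by \eqref{f}, $f\ge c_\omega>0$ on $\omega$, so $h_p(u_p)$ is bounded in $L^1(\omega)$; by \eqref{gh}, $|\Gamma_{p,p}(s)|\le Ch(s)+s+1$ for small $s$, whence $\Gamma_{p,p}(u_p)$ is bounded in $L^1(\omega)$, and the gradient estimate then gives local $BV$ compactness. Inserting this step makes your proof complete and essentially identical to the paper's.
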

\begin{remark}\label{coerenza}
	Besides the smallness assumption on  $||f||_{L^N(\Omega)}\mathcal{S}_1h(\infty)$ which has been already discussed, assumption \eqref{condiz} is natural due to the possible criticality of the nonlinearity $g$.  If one thinks at the model case $g(s)=\lambda s^{-1}$,  the request reduces to $\lambda <1$ that allows us to  retrieve a sort of coercivity in the estimate. This type of assumption also appears, and is shown to be optimal,  in the case $p>1$ (see for instance \cite{ABLP, GPS2}). However, let us point out that, as $p\to 1^+$,  a curious  continuity break of this phenomenon comes out and, in some special cases,   solutions to problem \eqref{pb} can be constructed even beyond this threshold. This is also  related to the geometry of the set $\Omega$ and it will be discussed in Section \ref{break}. 
\end{remark}
We will work again  through the approximation process given by \eqref{pbp}. From here, in agreement with \eqref{gamma1}, the following notation is employed:
\begin{equation*}\label{gammap1}
\Gamma_{p,q}(s)= \displaystyle 
\int_{1}^{s} g_p^{\frac{1}{q}}(t) \ dt,
\end{equation*} 
which, if $q=1$ or $q =p$, converges to $\Gamma(s)$ for any $ s>0$ as $p\to 1^+$. 

\medskip

We have the following basic estimates in which, again, the strong positivity of $f$ is not needed:
\begin{lemma}\label{lemmastimastrong}
	Let $g$ and $h$ satisfy \eqref{g1} with $\theta=1$ and \eqref{h1} with $\gamma\le 1$, and let $0\le f\in L^N(\Omega)$ such that  $$\displaystyle \max\left(\max_{s\in [0,\delta)} [g(s)s], ||f||_{L^N(\Omega)}\mathcal{S}_1h(\infty)\right)<1\,,$$ for some $\delta>0$.  Let $u_p$ be a solution of \eqref{pbp} then there exists $p_0>1$ such that for any $\eta>0$
	\begin{equation}\label{stimanon}
	\int_{\Omega} |\nabla (e^{\eta u_p}-1)|^p \le C,
	\end{equation}
	for some constant $C$ which does not depend on $p\in(1,p_0)$; moreover, 
\begin{equation*}\label{stimabvcr}
|| e^{\eta u_p}-1||_{W^{1,1}_{0}(\Omega)}\leq C, 
\end{equation*}
again for some $C$ not depending on  $p\in (1,p_0)$. 
\end{lemma}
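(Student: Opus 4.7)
The plan is to mirror the three-level decomposition used in Lemma~\ref{lemmastima}, writing $u_p = T_\varepsilon(u_p) + T_{\tilde k-\varepsilon}(G_\varepsilon(u_p)) + G_{\tilde k}(u_p)$ for a fixed $\tilde k\geq \tilde c+1$ and a small $\varepsilon>0$ to be chosen. The two outer estimates carry over with only cosmetic changes. For the high level, the choice $\tilde k\geq \tilde c+1$ forces $g_p\equiv 0$ on $\{u_p\geq \tilde k\}$, so that testing with $e^{\eta p G_{\tilde k}(u_p)}-1$ kills the singular gradient contribution, and the bound \eqref{stimagk2} is reproduced verbatim using the smallness assumption $\|f\|_{L^N(\Omega)}\mathcal S_1 h(\infty)<1$. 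For the intermediate level the function $g$ is bounded on the compact interval $[\varepsilon,\tilde c+1]$, so testing with $e^{\eta p T_{\tilde k-\varepsilon}(G_\varepsilon(u_p))}-1$ and choosing $\eta$ large reproduces the analogue of \eqref{stimatkgk}.

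The substantive new piece is the low-level estimate on $T_\varepsilon(u_p)$, where the critical scaling $\theta=1$ removes the vanishing coefficient $c_1\varepsilon^{1-\theta}$ that was exploited in Lemma~\ref{lemmastima}. The assumption \eqref{condiz} takes over here: pick $\varepsilon\in(0,\delta)$ so that $g(s)s\leq 1-\sigma$ on $[0,\varepsilon]$ for some fixed $\sigma>0$. Testing the weak formulation of \eqref{pbp} with $T_\varepsilon(u_p)$ and splitting the gradient term gives
\begin{equation*}
\int_\Omega g_p(u_p) T_\varepsilon(u_p)|\nabla u_p|^p
= \int_{\{u_p\leq\varepsilon\}} g_p(u_p)\,u_p\,|\nabla T_\varepsilon(u_p)|^p
 + \int_{\{u_p>\varepsilon\}} g_p(u_p)\,\varepsilon\,|\nabla u_p|^p .
\end{equation*}
The first term is at most $(1-\sigma)\int_\Omega |\nabla T_\varepsilon(u_p)|^p$ and is reabsorbed on the left-hand side. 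For $s\in(\varepsilon,\delta]$ one writes $g_p(s)\varepsilon\leq g(s)s\cdot(\varepsilon/s)\leq 1-\sigma$, while for $s\in[\delta,\tilde c+1]$ the factor $g_p(s)\varepsilon$ is bounded by $\varepsilon \sup_{[\delta,\tilde c+1]}g$, which is made arbitrarily small by shrinking $\varepsilon$; hence the second term is controlled by the intermediate-level estimate times $(1-\sigma)$ plus a term that is small in $\varepsilon$. The zero-order contribution $\int_\Omega h_p(u_p)f\,T_\varepsilon(u_p)$ is controlled by $C\int_\Omega f$ since $\gamma\leq 1$ keeps $h(s)s$ bounded near the origin.

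Putting together the three pieces and using $e^{\eta p u_p} = e^{\eta p T_\varepsilon(u_p)}\, e^{\eta p T_{\tilde k-\varepsilon}(G_\varepsilon(u_p))}\, e^{\eta p G_{\tilde k}(u_p)}$ (with the first factor trivially bounded by $e^{\eta p\varepsilon}$) produces \eqref{stimanon}; then Young's inequality, exactly as in the passage from \eqref{stimap} to \eqref{bv}, yields the $W^{1,1}_0$ bound. The main obstacle is the low-level step: the margin $\sigma$ coming from the strict inequality in \eqref{condiz} must be carefully tracked across the sets $\{u_p\leq\varepsilon\}$, $\{\varepsilon<u_p\leq\delta\}$ and $\{u_p>\delta\}$, and then reconciled with the coefficients coming from the intermediate and high-level estimates so that the final constants remain uniform in $p\in(1,p_0)$ as $p\to 1^+$.
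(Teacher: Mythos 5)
Your proposal is correct and follows essentially the same route as the paper's proof: a three-level truncation decomposition of $u_p$, absorption of the critical gradient contribution on $\{u_p\ \text{small}\}$ via the strict bound $\sup_{s\in[0,\delta)}[g(s)s]<1$, and exponential test functions with $\eta$ large on the intermediate and high ranges (where the high range is harmless because $g_p\equiv 0$ above $\tilde c+p-1$). The only cosmetic difference is that the paper truncates directly at the level $\delta$ from the hypothesis rather than at a smaller $\varepsilon<\delta$, so your extra splitting of $(\varepsilon,\delta]$ and $[\delta,\tilde c+1]$ is unnecessary but harmless.
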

\begin{proof}
	Let us observe that only the behaviour in zero of $g$ is different with respect to the case of Lemma \ref{lemmastima}. Hence the boundedness of $e^{\eta G_k(u_p)}-1$ in $W^{1,p}_0(\Omega)$ for $k\ge \tilde{k}\ge \tilde{c}+1$ follows as before. Let us estimate the truncated functions;  take $T_\delta(u_p)$ as a test function in \eqref{pbp} obtaining (recall that after $\tilde{k}$ we have $g_p\equiv 0$ since we suppose $1<p<2$)
	\begin{equation*}\label{stimatkminore1}
	\begin{aligned}
	\int_{\Omega} |\nabla T_\delta(u_p)|^p &\le \int_\Omega g(u_p) |\nabla T_\delta(u_p)|^p u_p + \delta\int_\Omega g(u_p)|\nabla T_{\tilde{k}}(G_\delta(u_p))|^p  \\ &+ \left(c_2\delta^{1-\gamma} + \sup_{s\in[s_2,\infty)}h(s)\delta\right) \int_\Omega f
	\le \sup_{s\in [0,\delta)}[g(s)s]\int_\Omega |\nabla T_\delta(u_p)|^p \\ & + \delta \sup_{s\in [\delta,\tilde{c}+1)} g(s) \int_\Omega |\nabla T_{\tilde{k}}(G_\delta(u_p))|^p 
	+ \left(c_2\delta^{1-\gamma} + \sup_{s\in[s_2,\infty)}h(s)\delta\right) \int_\Omega f,
	\end{aligned}	
	\end{equation*}
	which gives 
	\begin{equation}\label{stimatkminore2}
	\begin{aligned}
	&\left(1-\sup_{s\in [0,\delta)}[g(s)s]\right)\int_{\Omega} |\nabla T_\delta(u_p)|^p \\ & \le \delta\sup_{s\in [\delta,\tilde{c}+1)}g(s)\int_\Omega |\nabla T_{\tilde{k}}(G_\delta(u_p))|^p  + \left(c_2\delta^{1-\gamma} + \sup_{s\in[s_2,\infty)}h(s)\delta\right) \int_\Omega f.
	\end{aligned}	
	\end{equation}	
	In order to estimate the first term on the right hand side of \eqref{stimatkminore2} we take $e^{\eta p T_{\tilde{k}}(G_\delta(u_p))}-1$ as a test function in \eqref{pbp} yielding to
	\begin{equation*}\label{stimatkminore3}
	\begin{aligned}
	\eta p\int_{\Omega} |\nabla T_{\tilde{k}}(G_\delta(u_p))|^pe^{\eta p T_{\tilde{k}}(G_\delta(u_p))} &\le \int_\Omega g_p(u_p) |\nabla T_{\tilde{k}}(G_\delta(u_p))|^p e^{\eta pT_{\tilde{k}}(G_\delta(u_p))} +  \sup_{s\in [\delta,\infty)}h(s) e^{\eta p \tilde{k}} \int_\Omega f,
	\\
	&\le  \sup_{s\in [\delta,\tilde{c}+1)} g(s) \int_\Omega |\nabla T_{\tilde{k}}(G_\delta(u_p))|^p e^{\eta p T_{\tilde{k}}(G_\delta(u_p))} +  \sup_{s\in [\delta,\infty)}h(s) e^{\eta p \tilde{k}} \int_\Omega f
	\end{aligned}	
	\end{equation*}
	and, requiring $\eta$ big enough, one has
	\begin{equation*}\label{stimatkminore4}
	\begin{aligned}
	\int_{\Omega} |\nabla T_{\tilde{k}}(G_\delta(u_p))|^pe^{\eta p T_{\tilde{k}}(G_\delta(u_p))} \le C,
	\end{aligned}	
	\end{equation*}
	which gathered in \eqref{stimatkminore2} and by monotonicity gives that \eqref{stimanon} holds for any $\eta>0$. Then,   reasoning as in the proof of Lemma \ref{lemmastima},  one concludes the proof.
\end{proof}  

Now we state  the existence of a bounded limit function $u$; its  proof closely follows the  one of Lemma \ref{corou} and we omit it.

\begin{lemma}\label{corou2}
	Under the assumptions of Lemma \ref{lemmastimastrong} there exists $p_0>1$ such that $ u_p$ is bounded in $BV(\Omega)$  uniformly with respect to $p\in (1,p_0)$. Moreover there exists $u\in BV(\Omega)\cap L^\infty(\Omega)$ such that $ u_p$ converges to $ u$ (up to a subsequence) in $L^q(\Omega)$ for every $q<\infty$ and $\nabla u_p$ converges to $D u$ locally *-weakly as measures. It holds
	\begin{equation*}\label{boundedstrong}
	||u||_{L^\infty(\Omega)}\le \tilde{c}.
	\end{equation*}
\end{lemma}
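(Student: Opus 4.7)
The proof follows verbatim the scheme of Lemma \ref{corou}, since all the essential ingredients are already at our disposal through Lemma \ref{lemmastimastrong}. I would proceed in three steps.

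First, I would exploit the uniform bound $\|e^{\eta u_p}-1\|_{W^{1,1}_0(\Omega)}\le C$ provided by Lemma \ref{lemmastimastrong}. Since $e^{\eta u_p}-1\ge \eta u_p$, this bound implies $\|u_p\|_{BV(\Omega)}\le C$ uniformly in $p\in (1,p_0)$. By standard $BV$-compactness, up to a subsequence there exists $u\in BV(\Omega)$ such that $u_p\to u$ in $L^q(\Omega)$ for every $q<\tfrac{N}{N-1}$ and $\nabla u_p\overset{*}{\rightharpoonup} Du$ locally as Radon measures on $\Omega$. The bound on $\|e^{\eta u_p}-1\|_{W^{1,1}_0(\Omega)}$, combined with the Sobolev embedding of $BV$ into $L^{N/(N-1)}$ and a further diagonal extraction, upgrades this to $e^{\eta u_p}-1\to e^{\eta u}-1$ in $L^1(\Omega)$. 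Since $\eta$ is arbitrary, a Vitali-type argument gives $u_p\to u$ in $L^q(\Omega)$ for every $q<\infty$.

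Second, to establish the $L^\infty$-bound $\|u\|_{L^\infty(\Omega)}\le \tilde{c}$, I would mimic exactly the argument carried out at the end of the proof of Lemma \ref{corou}. Set $k_p:=\tilde{c}+p-1$ and test the weak formulation of \eqref{pbp} with $G_{k_p}(u_p)\in W^{1,p}_0(\Omega)$. The crucial observation is that by the very definition of $g_p$, one has $g_p(s)\equiv 0$ for $s\ge \tilde{c}+p-1=k_p$, so the gradient term disappears on $\{u_p>k_p\}$ and the test identity reduces to
\begin{equation*}
\int_\Omega |\nabla G_{k_p}(u_p)|^p = \int_\Omega h_p(u_p)\,f\,G_{k_p}(u_p) \le h_{k_p}(\infty)\,\|f\|_{L^N(\Omega)}\,\|G_{k_p}(u_p)\|_{L^{N/(N-1)}(\Omega)}.
\end{equation*}
Applying Young's and Sobolev's inequalities on the left-hand side exactly as in Lemma \ref{corou}, and exploiting that $h_{k_p}(\infty)\le h_{\tilde{c}}(\infty)$ together with the smallness assumption $\|f\|_{L^N(\Omega)}\mathcal{S}_1 h(\infty)<1$ from \eqref{condiz}, yields
\begin{equation*}
C\,\|G_{k_p}(u_p)\|_{L^{N/(N-1)}(\Omega)}\le \tfrac{p-1}{p}|\Omega|,
\end{equation*}
with $C>0$ independent of $p$ near $1$. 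Passing to the limit as $p\to 1^+$ via weak lower semicontinuity of the $L^{N/(N-1)}$ norm, one obtains $G_{\tilde{c}}(u)=0$ a.e., i.e. $u\le\tilde{c}$, and consequently $u\in L^\infty(\Omega)$.

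The key obstacle is genuinely conceptual rather than technical: the gradient term forces a nontrivial interaction in the approximation, and the entire strategy relies on having truncated $g_p$ precisely at the threshold $\tilde{c}$ coming from the auxiliary problem \eqref{pbzero}, so that the $L^\infty$ bound inherited from the problem with $g\equiv 0$ is preserved along the approximation. Note that in contrast to Lemma \ref{corou}, no $BV_{\rm loc}$ bound on $\Gamma_{p,p}(u_p)$ is asserted here, which is consistent with the fact that for $\theta=1$ the primitive $\Gamma$ defined by \eqref{gamma1} may blow up at the origin and such an estimate would require additional care (to be dealt with later when passing to the limit in the equation).
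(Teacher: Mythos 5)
Your proposal is correct and follows precisely the route the paper intends: the authors omit the proof of Lemma~\ref{corou2}, stating only that it ``closely follows the one of Lemma~\ref{corou}'', and your reconstruction faithfully reproduces that argument --- extracting $BV$-compactness from the exponential estimate of Lemma~\ref{lemmastimastrong}, testing with $G_{k_p}(u_p)$ at the level $k_p=\tilde{c}+p-1$ where $g_p$ vanishes, and using weak lower semicontinuity to conclude $u\le\tilde{c}$. Your closing remark correctly identifies the one genuine difference with Lemma~\ref{corou}, namely that the $BV_{\rm loc}$ bound on $\Gamma_{p,p}(u_p)$ is not asserted here (since for $\theta=1$ the primitive $\Gamma$ from \eqref{gamma1} may diverge at the origin) and is instead established later in the proof of Theorem~\ref{teomain2} using assumption \eqref{gh}.
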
	

We are ready to prove our main theorem.

\begin{proof}[Proof of Theorem \ref{teomain2}]
	
Let $u_p$ be a solution to \eqref{pbp} then from  Lemma \ref{corou2} one has that there exists $u\in BV(\Omega)\cap L^\infty(\Omega)$ such that $u_p$ converges to $u$ in $L^q(\Omega)$ for every $q<\infty$ and $\nabla u_p$ converges to $Du$ locally *-weakly as measures as $p$ tends to $1$. The construction of the bounded vector field $z$ then follows as in the proof of Theorem \ref{teomain}.

Observe that $h_p(u_p)f$ is locally bounded in $L^1(\Omega)$. Indeed, by simply taking a nonnegative $\varphi \in C^1_c(\Omega)$ and getting rid of the gradient term, one has through the Young inequality that
\begin{equation}\label{l1loc_coerstrong}
	\int_\Omega h_p(u_p)f\varphi \le \int_\Omega |\nabla u_p|^p + \int_\Omega |\nabla \varphi|^p\le C,
\end{equation} 
where the last inequality is a consequence of  \eqref{stimanon}. Moreover an application of the Fatou Lemma in \eqref{l1loc_coerstrong} with respect to $p$ gives $h(u)f\in L^1_{\rm loc}(\Omega)$ and, thanks to \eqref{f}, one also has that $h(u)\in L^1_{\rm loc}(\Omega)$. Moreover, even in this case, we underline that having $h(u)$ locally integrable implies that  $\{u=0\}$ is contained in the set $\{f=0\}$,  and so $u>0$. \bk
Now, for some $C>0$, one has  
$$|{\Gamma}_{p,p}(s)|\le |\Gamma(s)| + s + 1 \overset{\eqref{gh}}{\le } C h(s) +s +1$$ 
for any $0<s<\tilde{s}$ with $\tilde{s}$ sufficiently near to 0. Hence one has that ${\Gamma}_{p,p}(u_p)$ is bounded in $L^1_{\rm loc}(\Omega)$. Moreover taking a nonnegative $\varphi\in C^1_c(\Omega)$ as a test function in the weak formulation of \eqref{pbp}, getting rid of the nonnegative zero order term and applying the Young inequality, one yields to  
$$\int_{\Omega} |\nabla \Gamma_{p,p}(u_p)|\varphi \le \int_{\Omega} |\nabla u_p|^{p-2}\nabla u_p \cdot \nabla \varphi +\frac{p-1}{p} \int_\Omega \varphi \le \int_{\Omega} |\nabla u_p|^p + \int_{\Omega} |\nabla \varphi|^p + \frac{p-1}{p} \int_\Omega \varphi\le C,$$
which, once again thanks to Lemma \ref{lemmastimastrong}, gives that $\Gamma_{p,p}(u_p)$ is locally bounded in $BV(\Omega)$. 
\medskip

The proof that $h_p(u_p)f$ converges to $h(u)f$ locally in $L^1(\Omega)$ is identical to the one of the previous section and so we skip it.
We take $e^{ \Gamma_{p,1}(u_p)}\varphi$ ($\varphi\in C^1_c(\Omega)$) as a test function in \eqref{pbp} obtaining 
\begin{equation}\label{approxeqperesp}
\int_{\Omega} e^{ \Gamma_{p,1}(u_p)}|\nabla u_p|^{p-2}\nabla u_p\cdot \nabla \varphi = \int_{\Omega} h_p(u_p)fe^{\Gamma_{p,1}(u_p)}\varphi.
\end{equation}
As already done in the previous section one can prove that both terms in \eqref{approxeqperesp} converge, obtaining that
\begin{equation*}\label{eqperesp}
\int_{\Omega} e^{ {\Gamma}(u)} z\cdot \nabla \varphi = \int_{\Omega} h(u)f e^{ {\Gamma}(u)}\varphi.
\end{equation*}
Now let us take a nonnegative $\varphi\in C^1_c(\Omega)$ as a test in \eqref{pbp} one yields to
$$\int_{\Omega} |\nabla u_p|^{p-2}\nabla u_p \cdot \nabla \varphi = \int_\Omega |\nabla {\Gamma}_{p,p}(u_p)|^p\varphi + \int_\Omega h_p(u_p)f\varphi.$$
Now we observe that ${\Gamma}_{p,p}(u_p)$ tends to $\Gamma(u)$ locally in $L^1(\Omega)$ as $p\to 1^+$ and applying the Young inequality, lower semicontinuity and the Fatou Lemma, one obtains 
\begin{equation*}
\int_{\Omega} z\cdot \nabla \varphi \ge \int_\Omega \varphi |D \Gamma(u)| + \int_\Omega h(u)f\varphi,
\end{equation*}
which is that $z\in \mathcal{D}\mathcal{M}^\infty_{\rm loc}(\Omega)$ and so
\begin{equation}\label{dis}
-\operatorname{div}z \ge |D \Gamma(u)| + h(u)f \text{  as measures in }\Omega.
\end{equation}
Let us note that we can apply Lemma \ref{lemmasalto} in order to deduce that $D^j u=0$.
Furthermore one can reason as in \eqref{proofdistr} in order to deduce that inequality \eqref{dis} is actually an equality and that $(z,D\Gamma(u))=|D\Gamma(u)|$. Now one can apply Lemma \ref{chainrule} obtaining that \eqref{def_distrp=1} holds and then  \eqref{def_zp=1}. Moreover Lemma \ref{lemmal1} gives that $z\in\mathcal{D}\mathcal{M}^\infty(\Omega)$ and the proof of the fulfillment of the boundary datum realized   as in the proof of Theorem \ref{teomain}. 
\end{proof}

\section{Nonnegative data and strong singularities}
\label{nonnegative}

In this section we show how  the case of a purely nonnegative datum $f$ as well as the case of a possibly stronger zero order singularity, i.e. $\gamma>1$, can be treat. To simplify the exposition   the following useful notation is employed:
$$\sigma:= \max(1,\gamma)\,.$$

As already mentioned the case $h(0)<\infty$  is essentially the trivial one as, under suitable smallness assumptions on the data,  $u=0$ is a solution to problem \eqref{pbzero} and then \eqref{pb} (see Remark \ref{g=0}); nevertheless, even though this is not always the case,   we assume  $h(0)=\infty$ without loosing generality; the case $h(0)<\infty$ can be treat with straightforward modifications as in the previous sections.

Here is the suitable notion of solution in this general  case: 
\begin{defin}
	\label{weakdefstrong}
	A nonnegative $u\in BV_{\rm loc}(\Omega)$ such that  $\chi_{\{u>0\}}\in BV_{\rm loc}(\Omega)$ and $u^\sigma \in BV(\Omega)$ is a solution to problem \eqref{pb} if $g(u^*)\chi_{\{u>0\}}^\ast \in L^1_{\rm loc}(\Omega, |Du|),  h(u)f \in L^1_{\rm loc}(\Omega)$ and if there exists $ z\in \mathcal{D}\mathcal{M}^\infty_{}(\Omega)$ with $||z||_{L^\infty(\Omega)^N}\le 1$ such that 
	\begin{align}
	&(-\operatorname{div}z) \chi^{\ast}_{\{u>0\}}= g(u^*) \chi_{\{u>0\}}^\ast |Du| +  h(u)f \ \ \text{as measures in } \Omega, \label{def_distrp=1strongnon}
	\\
	&(\chi_{\{u>0\}}z,Du)=|Du| \label{def_zp=1strongnon} \ \ \ \ \text{as measures in } \ \Omega,
	\\
	& u^\sigma(x) (1+ [z,\nu] (x))=0  \label{def_bordop=1strongnon}\ \ \ \text{for  $\mathcal{H}^{N-1}$-a.e. } x \in \partial\Omega.
	\end{align}
\end{defin} 
\begin{remark}\label{rema} 
First observe that Definition \ref{weakdefstrong} is a general version of Definition \ref{weakdefpositive} allowing local regularity of the involved actors. The only global request, in the case $\gamma>1$,  is addressed to a suitable power of the solution $u$ yielding the well position  of  the boundary datum requirement \eqref{def_bordop=1strongnon}. 
	We underline that a solution as in Definition \ref{weakdefstrong} is also a solution in the sense of Definition \ref{weakdefpositive}  provided  $f>0$ a.e. in $\Omega$ and $\gamma\le 1$. 
	Indeed, since $h(0)=\infty$ having $h(u)f\in L^1_{\rm loc}(\Omega)$ implies that $u>0$  and then,  in equation \eqref{def_distrp=1strongnon} the characteristic functions disappear. Finally observe that this definition extends the one given in \cite{DGOP} in the case $g\equiv 0$.  
The presence of the function $g(u^*)\chi_{\{u>0\}}^\ast $	  in the previous definition is essentially technical since, again,  we do not request for the  solution $u$  to possess a purely diffuse derivative $D u$ (i.e. $D^j u =0$). A posteriori, since this is the case,  	the gradient term appearing in \eqref{def_distrp=1strongnon} can be intended as  	$g(u)\chi_{\{u>0\}} |Du|$. 		
\end{remark}
Here is our  existence theorem in the mild singular case (i.e. $\theta<1$); the proof will be sketched by highlighting the difference with the proof of Theorem \ref{teomain}. 
\begin{theorem}\label{teo_p>1strong}
	Let $0\le f\in L^N(\Omega)$ such that $||f||_{L^N(\Omega)}\mathcal{S}_1h(\infty)<1$ and let $g,h$ satisfy \eqref{g1} and \eqref{h1} with $\theta<1$. Then there exists a solution $u\in L^\infty(\Omega)$ to problem \eqref{pb} in the sense of Definition \ref{weakdefstrong}. 
\end{theorem}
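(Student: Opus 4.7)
The plan is to run the same approximation via the $p$-Laplacian problems \eqref{pbp} with $p\to 1^+$ that has been used for Theorem \ref{teomain}, adapting the argument to handle the possible vanishing of $u$ on $\{f=0\}$ and the stronger singularity $\gamma>1$. The first observation is that neither Lemma \ref{lemmastima} nor Lemma \ref{corou} use the strict positivity of $f$; therefore the same exponential test functions immediately yield, for $p\in (1,p_0)$ with $p_0$ suitable, that $u_p$ is uniformly bounded in $BV(\Omega)\cap L^\infty(\Omega)$, that $u_p\to u$ in $L^q(\Omega)$ for every $q<\infty$ with $||u||_{L^\infty(\Omega)}\le \tilde{c}$. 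Since $\theta<1$ the function $\Gamma$ is locally Lipschitz up to $s=0$, so $\Gamma_{p,p}(u_p)$ stays bounded in $BV_{\rm loc}(\Omega)$ and $\Gamma(u)\in BV_{\rm loc}(\Omega)$; Lemma \ref{lemmasalto} then grants $D^j u=0$. A vector field $z\in L^\infty(\Omega)^N$ with $||z||_{L^\infty(\Omega)^N}\le 1$ is obtained as the weak limit of $|\nabla u_p|^{p-2}\nabla u_p$ exactly as in the proof of Theorem \ref{teomain}.

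The new global regularity requirement $u^\sigma\in BV(\Omega)$ will be proven by testing the weak formulation of \eqref{pbp} with an appropriate power of $u_p$. A combination of Lemma \ref{corou}, the Young inequality and the chain rule gives a uniform $BV(\Omega)$ bound for $u_p^\sigma$; indeed the gradient contribution $\int_\Omega g_p(u_p)|\nabla u_p|^p u_p^\sigma$ is harmless thanks to the $L^\infty$ control on $u_p$, while the zero order term $\int_\Omega h_p(u_p)f u_p^\sigma$ stays bounded because $\sigma\ge \gamma$ and $f\in L^N(\Omega)$. Lower semicontinuity then delivers $u^\sigma\in BV(\Omega)$; the fact that $\chi_{\{u>0\}}\in BV_{\rm loc}(\Omega)$ follows from the coarea formula applied on any compact subset of $\Omega$, since $\{u>0\}$ is a superlevel set of the $BV$ function $u$.

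To derive the distributional formulation \eqref{def_distrp=1strongnon} featuring $\chi^*_{\{u>0\}}$, I would test the approximate equation with $e^{\Gamma_{p,1}(u_p)}\varphi$, $\varphi\in C^1_c(\Omega)$, and proceed as in the proof of Theorem \ref{teomain}, splitting into the regions $\{u_p\le \delta\}$ and $\{u_p>\delta\}$ via the truncation $V_\delta$ from \eqref{Vdelta}. The Fatou lemma yields $h(u)f\in L^1_{\rm loc}(\Omega)$ and, since $h(0)=\infty$, this forces $\{u=0\}\subset\{f=0\}$ up to Lebesgue negligible sets; in particular $h(u)f=\chi_{\{u>0\}}h(u)f$ almost everywhere. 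The analysis of the vanishing of the integral on $\{u_p\le \delta\}$ as $p\to 1^+$ and then $\delta\to 0^+$ now exploits that $V_\delta(u)$ converges to $\chi_{\{u=0\}}$, producing in the limit the characteristic function that appears in \eqref{def_distrp=1strongnon}. The identification \eqref{def_zp=1strongnon} of the vector field will then follow from the usual chain of inequalities of the type \eqref{proofdistr}, combined with Lemma \ref{chainrule} (noticing that $\chi_{\{u>0\}}z$ coincides with $z$ on the support of $|Du|$) and Lemma \ref{lemmal1} to upgrade $z$ from $\mathcal{DM}^\infty_{\rm loc}(\Omega)$ to $\mathcal{DM}^\infty(\Omega)$. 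Finally, the weak boundary datum \eqref{def_bordop=1strongnon} is recovered by testing \eqref{pbp} with $u_p^m$ for $m\ge \sigma$, letting first $p\to 1^+$ and then $m\to\infty$, exactly as in the last step of the proof of Theorem \ref{teomain}, the only difference being that the natural $BV$-integrable boundary trace is now that of $u^\sigma$ rather than $u$.

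The step I expect to be the main obstacle is the passage to the limit on the set $\{u=0\}$: a priori one has no control on $g(u_p)|\nabla u_p|$ nor on $h_p(u_p)f$ there, and the characteristic function $\chi^*_{\{u>0\}}$ must emerge in the limit from the cutoffs $V_\delta$. Careful tracking of the splitting between $\{u_p\le\delta\}$ and $\{u_p>\delta\}$, together with the fact that $\delta$ can be chosen outside the at-most-countable set of jump values of $u$, are the key ingredients; the assumption $h(0)=\infty$, which forces $u>0$ wherever $f>0$, is what makes the argument close. A secondary delicate point is the uniform $BV$ bound on $u_p^\sigma$ when $\gamma>1$, because the chosen power of $u_p$ must simultaneously absorb the gradient term $g_p(u_p)|\nabla u_p|^p$ and the strongly singular zero order contribution $h_p(u_p)f$ without picking up constants that blow up as $p\to 1^+$.
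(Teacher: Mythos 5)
Your overall strategy is the right one and follows the paper's, but two of the steps you describe contain genuine gaps.

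First, the claim that $\chi_{\{u>0\}}\in BV_{\rm loc}(\Omega)$ ``follows from the coarea formula applied on any compact subset of $\Omega$, since $\{u>0\}$ is a superlevel set of the $BV$ function $u$'' is not correct. The coarea formula guarantees that $\{u>t\}$ has locally finite perimeter for \emph{almost every} $t$, and $t=0$ may well be one of the exceptional levels. As a one-dimensional example, take a fat Cantor set $C\subset[0,1]$ and let $u$ be the distance function to $C$, so that $u$ is Lipschitz (hence $BV$) and $\{u>0\}=[0,1]\setminus C$ is a countable union of intervals whose perimeter is infinite. The paper obtains $\chi_{\{u>0\}}\in BV_{\rm loc}(\Omega)$ not from the mere $BV$ regularity of $u$ but from the \emph{equation}: testing \eqref{pbp} with $S_\delta(u_p)e^{\Gamma_{p,1}(u_p)}\varphi$ (with $S_\delta$ a piecewise-linear approximation of $\chi_{\{s>0\}}$), passing to the limit in $p$, and then using the lower semicontinuity of the total variation as $\delta\to 0$ to bound $\int_\Omega e^{\Gamma(u)}\varphi\,|D\chi_{\{u>0\}}|$ by terms that are finite precisely because $h(u)f\in L^1_{\rm loc}(\Omega)$ and $z\cdot\nabla\varphi\in L^1(\Omega)$. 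This argument, and the subsequent use of the product rule $(z,D(uv))=u(z,Dv)+(vz,Du)$ from Lemma \ref{l-ms} inside the chain of inequalities \eqref{proofdistrstrong}, is precisely the mechanism by which the factor $\chi^*_{\{u>0\}}$ appears in \eqref{def_distrp=1strongnon}; your sketch of passing $V_\delta(u)\to\chi_{\{u=0\}}$ does not by itself produce the needed pairing identity.

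Second, your statement that Lemma \ref{lemmastima} and Lemma \ref{corou} apply as-is and give a uniform $BV(\Omega)$ bound on $u_p$ is wrong when $\gamma>1$ (which is the genuinely new regime in this section). Lemma \ref{lemmastima} assumes \eqref{mild}, i.e.\ $\gamma\le 1$, and its argument fails for $\gamma>1$: the term $h_p(u_p)T_\varepsilon(u_p)$ is no longer controlled by $c_2\varepsilon^{1-\gamma}$ near zero. The paper's remedy is to replace $T_\varepsilon(u_p)$ by $T_k^\gamma(u_p)$ as a test function (so the pointwise product $h(s)s^\gamma$ stays bounded), which yields only a global $W^{1,p}_0$ bound on $T_k^{(\gamma-1+p)/p}(u_p)$ together with \emph{local} $W^{1,p}$ bounds on $u_p$. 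Consequently $u$ lands in $BV_{\rm loc}(\Omega)$ while $u^\sigma$ is what lands in $BV(\Omega)$ — exactly the regularity postulated in Definition \ref{weakdefstrong}. You cannot assert $u_p$ bounded in $BV(\Omega)$ first and derive $u^\sigma\in BV(\Omega)$ afterwards; the two assertions are not compatible and only the second one is true. The boundary-datum step also needs to be run with this in mind: since the natural global trace is that of $u^\sigma$, one has to write the key boundary integral as $\int_{\partial\Omega} u^{m-\sigma}\bigl(u^\sigma(1+[z,\nu])-\dots\bigr)\,d\mathcal H^{N-1}$ rather than the version with $u^{m-1}$ used in Theorem \ref{teomain}.
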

\begin{proof}
As already said, the proof of Theorem \ref{teo_p>1strong} adheres to  the one of Theorem \ref{teomain}; therefore,  we will only sketch the analogous  arguments while   major rigour will be provided  when the proofs are detaching each other. Clearly if $\gamma\le 1$  the estimates are the ones proved in Lemma \ref{lemmastima}. 
Observe that the presence of a possibly strong singularity only affects the estimates when $u$ is small.  
  If $\gamma>1$,  Lemma \ref{lemmastima} can be reproduced by  treating the case $u\sim 0 $ as follows;  take $T_k^\gamma(u_p)$ ($k$ sufficiently small) as a test function in \eqref{pbp} obtaining 
\begin{equation*}\label{stima1strong}
\begin{aligned}
&\gamma\int_{\Omega} |\nabla T_{k}(u_p)|^p T_k^{\gamma-1}(u_p) = \int_\Omega g_p(u_p) T_k^\gamma(u_p) |\nabla u_p|^p  \\ & + \int_\Omega h_p(u_p)fT_k^\gamma(u_p) 
\le c_1k^{1-\theta}\int_\Omega |\nabla T_{k}(u_p)|^p T_k^{\gamma-1}(u_p)  \\ &+  \left(c_1k^{1-\theta}+\sup_{s\in[s_1,\infty)}g(s)k\right)\int_\Omega |\nabla T_{\tilde{k}-k}(G_{k}(u_p))|^p
+\left(c_2 + \sup_{s\in[s_2,\infty)}h(s)k\right)\int_\Omega f,
\end{aligned}	
\end{equation*}
which requiring $k\le \tilde{k}$ for some $\tilde{k}$ such that $\gamma - c_1\tilde{k}^{1-\theta}>C$ for some constant independent of $p$ and reasoning as in Lemma \ref{lemmastima} for the second term at the right hand side of the previous, one has 
\begin{equation*}\label{stima2strong}
\begin{aligned}
\left(\frac{p}{\gamma-1+p}\right)^p\int_{\Omega} |\nabla T_{k}^{\frac{\gamma-1+p}{p}}(u_p)|^p  &\le C,
\end{aligned}	
\end{equation*}
for any $k \le \tilde{k}$.
Moreover, as in Lemma \ref{lemmastima}, $||e^{G_k(u_p)}-1||_{W^{1,p}_0(\Omega)} \le C$ for a constant independent of $p$ for any $k$ sufficiently large.

\medskip 

 Local estimates are obtained by considering $0\le \varphi\in C^1_c(\Omega)$ and taking  $(T_k(u_p)-k)\varphi^p$ to test  \eqref{pbp},   deducing
\begin{equation*}
	\label{stimaloc}
	\int_{\Omega}|\nabla T_k(u_p)|^p\varphi^p + p\int_\Omega |\nabla u_p|^{p-2}\nabla u_p\cdot \nabla \varphi \varphi^{p-1} (T_k(u_p)-k)\le 0,
\end{equation*}   
which gives, by Young's inequality 
\begin{equation*}\label{stimalocgamma>1}
\int_{\Omega} |\nabla T_k(u_p)|^{p}\varphi^p \le 	pk\int_{\Omega} |\nabla T_k(u_p)|^{p-1}|\nabla \varphi| \varphi^{p-1}\le  	pk\varepsilon \int_{\Omega} |\nabla T_k(u_p)|^{p}\varphi^{p} + pk C_\varepsilon\int_{\Omega} |\nabla \varphi|^p \,.
\end{equation*}
 Then, requiring $\varepsilon$ small enough, it yields to
\begin{equation*}
||T_k(u_p)||_{W^{1,p}(\omega)}\le C, \ \ \ \forall \omega\subset\subset \Omega,
\end{equation*}
where $C$ is independent of $p$. 

\medskip 
Summarizing, one has that the following a priori estimates hold:
\begin{equation}\label{prioristrong}
\begin{cases}
	||e^{\eta u_p}-1||_{W^{1,p}_0(\Omega)}\le C,  \ \ &\text{if }\gamma\le 1,
	\\
	||e^{\eta u_p}-1||_{W^{1,p}(\omega)}\le C, \ \forall \omega\subset\subset \Omega, \ ||T_k^{\frac{\gamma-1+p}{p}}(u_p)||_{W^{1,p}_0(\Omega)}\le C, \ \text{for }   k\le \tilde{k} \bk  \ \ &\text{if }\gamma> 1,	
\end{cases}
\end{equation}
where the constants $C$ do not depend on $p$ and for some $\tilde{k}>0$.
Hence, estimates \eqref{prioristrong} imply that $e^{u_p}-1$ is locally bounded in $BV(\Omega)$. This allows to \textit{localize} Lemma \ref{corou} providing that
there exists   $u\in BV_{\rm loc}(\Omega)\cap L^{\infty}(\Omega)$ such that, up to a subsequence, ${u_p}$ locally converges to ${u}$ in $L^q(\Omega)$ for $q<\infty$ and $\nabla u_p$ converges locally $\ast$-weakly as measures to $Du$. Moreover one has that $||u||_{L^\infty(\Omega)}\le \tilde{c}$ and  $u^\sigma\in BV(\Omega)$. Finally estimates \eqref{prioristrong} imply that, reasoning as in Lemma \ref{corou}, $\Gamma_{p,p}(u_p)$ is locally bounded in $BV(\Omega)$.

\medskip
The fact that $h(u)f$ belongs to $L^1_{\rm loc}(\Omega)$ follows exactly as in the proof of Theorem \ref{teomain}. Moreover the fact that the existence of the vector field $z$ can be proved through the local estimate on $|\nabla u_p|^{p-2}\nabla u_p$; the definition of  the limit vector field $z$  can be extended  to the whole $\Omega$ by mean of  a standard diagonal argument; moreover,  $z\in \DM_{\rm{loc}}(\Omega)$.  The proof that  $D^j u=0$ is analogous   to the case of Theorem \ref{teoreg}. In particular, by lower semicontinuity and Fatou's Lemma one easily gets 
\begin{equation}\label{soprasol-non}
-\operatorname{div}z \ge  |D \Gamma(u)| +  h(u)f \ \ \text{as measures in }\Omega\,, 
\end{equation}
and one can apply Lemma \ref{lemmasalto}.  Also observe that using Lemma \ref{lemmal1} one deduces that $z\in \DM_{\rm{}}(\Omega)$.  \bk 
\medskip 

 A relevant  main  difference with the case of a positive datum comes when one tries  to check  the weak formulation \eqref{def_distrp=1strongnon} due to the possible presence of $\chi_{\{u>0\}}^{\ast}$. 
 
One has to show first  that $\chi_{\{u>0\}}\in BV_{\rm loc}(\Omega)$; to prove it, let 
$$
K_{p,\delta}(s)=\int_0^s S'_\delta (t) e^{\Gamma_{p,1} (t)}\ d t \,, 
$$
and consider  $ S_\delta (u_p) e^{\Gamma_{p,1} (u_p)}\varphi$ ($\varphi\in C^1_c(\Omega)$ and nonnegative) as a test function in the weak formulation of \eqref{pbp} deducing, after cancellations and an application of the Young inequality, that
$$
\begin{aligned}&\int_\Omega |\nabla K_{p,\delta} (u_p)|\varphi + \int_\Omega |\nabla u_p|^{p-2}\nabla u_p \cdot \nabla \varphi S_\delta (u_p) e^{\Gamma_{p,1} (u_p)} \\ &\le \int_\Omega h_p(u_p)fS_\delta (u_p) e^{\Gamma_{p,1} (u_p)}\varphi + \frac{p-1}{p} \int_\Omega S'_\delta(u_p)e^{\Gamma_{p,1} (u_p)}\varphi.\end{aligned}$$
Hence  we  let  $p\to 1^+$ using  lower semicontinuity on the left hand side and  the Lebesgue Theorem on the other terms, obtaining, after rearranging   
 
$$\int_\Omega e^{\Gamma (u)}\varphi  |D S_\delta (u)|+ \int_\Omega z \cdot \nabla \varphi S_\delta (u) e^{\Gamma (u)} \le \int_\Omega h(u)fS_\delta (u) e^{\Gamma (u)}\varphi\,.$$

Now, recall that $h(u)f \in L^1_{\rm loc}(\Omega)$ and so $\{u=0\}\subset\{f=0\}$;  thus   taking  $\delta \to 0$,   one has
$$\int_\Omega e^{\Gamma (u)} \varphi  |D \chi_{\{u>0\}}|+ \int_{\{u>0\}} z \cdot \nabla \varphi e^{\Gamma (u)} \le \int_\Omega h(u)fe^{\Gamma (u)}\varphi,$$
that implies 
$\chi_{\{u>0\}}\in BV_{\rm loc}(\Omega)$ since the second and the third term in the previous are finite. Let us also underline that, by \eqref{dist1},  this also implies that $\chi_{\{u>0\}}z \in \mathcal{D}\mathcal{M}^\infty_{\rm loc}(\Omega)$.
\medskip

We have then  proven 
\begin{equation}\label{zperestrong}
-\operatorname{div}\left(ze^{\Gamma(u)}\chi_{\{u>0\}} \right) + e^{\Gamma (u)} |D \chi_{\{u>0\}}|\le h(u)fe^{\Gamma(u)}  \ \ \text{as measures in } \Omega.
\end{equation}

Now we integrate  \eqref{soprasol-non} against $\chi^{\ast}_{\{u>0\}}$ in order to deduce that 
\begin{equation}\label{soprasol2}
(-\operatorname{div}z) \chi^{\ast}_{\{u>0\}} \ge  |D \Gamma(u)| + h(u)f \ \text{as measures in }\Omega\,.
\end{equation}

One has 

\begin{equation}\label{proofdistrstrong}
\begin{aligned}
& {e^{\Gamma(u)}} |D \Gamma(u)| \stackrel{\eqref{soprasol2}}{\le}  { e^{\Gamma(u)}} \left((-\operatorname{div}z)\chi^{\ast}_{\{u>0\}} -h(u)f\right) \stackrel{\eqref{zperestrong}}{\leq} -{e^{\Gamma(u)}}\chi^{\ast}_{\{u>0\}}\operatorname{div}z + \diver (z e^{\Gamma(u)}\chi_{\{u>0\}}) 
\\
&-  e^{\Gamma (u)}  |D \chi_{\{u>0\}}|  = \left(z, D(e^{\Gamma(u)}\chi_{\{u>0\}})\right) -   e^{\Gamma (u)}  |D \chi_{\{u>0\}}| \stackrel{\eqref{f1}}{=} \left(\chi_{\{u>0\}}z, De^{\Gamma(u)}\right) 
\\
& +  e^{\Gamma(u)}\left(z, D\chi_{\{u>0\}}\right) -   e^{\Gamma (u)}  |D \chi_{\{u>0\}}| \le \left(\chi_{\{u>0\}}z, De^{\Gamma(u)}\right) \le  | D e^{\Gamma(u)}| =    {e^{\Gamma(u)}} |D \Gamma(u)|,
\end{aligned}
\end{equation}
from which one deduces that \eqref{soprasol2} is  indeed an equality. By  Lemma \ref{chainrule} one has that  $|D \Gamma(u)|=g(u)\chi_{\{u>0\}}|Du|$  and then \eqref{def_distrp=1strongnon} holds. Moreover from \eqref{proofdistrstrong} one has
$$\left(\chi_{\{u>0\}}z, De^{\Gamma(u)}\right) = |De^{\Gamma(u)}| \ \text{as measures in }\Omega,$$
and reasoning as in the proof of Theorem \ref{teomain}, one also gets that \eqref{def_zp=1strongnon} holds.

\medskip 

Let us focus on the boundary datum; we can reason as in the proof of Theorem \ref{teomain} up to \eqref{bound}, with, by lower semicontinuity 
	$$
		- u^m \operatorname{div}z \geq  u^m g(u)|Du| + h(u)fu^m \ \ \text{as measures in }\Omega\,,
	$$
in place of \eqref{peru}, as one starts from \eqref{soprasol-non}. Thus one can get
\begin{equation*}
\begin{aligned}
\int_{\Omega}|D \tilde{\Gamma}(u)| +  \int_{\partial\Omega} \tilde{\Gamma}(u) d \mathcal{H}^{N-1} \le - \int_\Omega u^m \operatorname{div}z - \int_\Omega u^m g(u)|Du| ,
\end{aligned}
\end{equation*}
for some $m\geq 2\sigma$, where $\tilde{\Gamma}(s)=\int_0^s (m-g(t)t)t^{m-1} \ dt$. Now applying the chain rule formula at the left hand side and the Green formula at the right hand side one yields to
	\begin{equation*}\label{bordo3strong}
\begin{aligned}
\int_{\Omega}|D u^m| +  \int_{\partial\Omega}\tilde{\Gamma}(u) d \mathcal{H}^{N-1} \le \int_\Omega (z,D u^m) - \int_{\partial\Omega}u^m [ z,\nu] d \mathcal{H}^{N-1} = \int_{\Omega}|D u^m| - \int_{\partial\Omega}u^m [ z,\nu] d \mathcal{H}^{N-1}.
\end{aligned}
\end{equation*}	
Hence
$$\int_{\partial\Omega}\left(\tilde{\Gamma}(u) + u^m[ z,\nu]\right) d \mathcal{H}^{N-1}= \int_{\partial\Omega} u^{m-\sigma}\left(u^\sigma(1+[ z,\nu]) - \frac{\displaystyle\int_0^u g(t)t^m \ dt}{u^{m-\sigma}}\right) d \mathcal{H}^{N-1} = 0.$$ 
The previous implies that for $x_0\in \partial\Omega$ either  $u(x_0)=0$ or 
$$u^\sigma(1+[ z,\nu]) \le \frac{\displaystyle\int_0^u g(t)t^m \ dt}{u^{m-\sigma}},$$
which, after taking $m\to\infty$, implies that $u^\sigma(1+[ z,\nu]) =0$ $\mathcal{H}^{N-1}$-almost everywhere on $\partial\Omega$. This concludes the proof.	   
\end{proof}

 Let us point out some facts on the critical case  $\theta=1$.  A straightforward adjustment   of the proof of  Theorem \ref{teomain2} can be done in order to include the   strongly singular case $\gamma>1$ the result being the following
\begin{theorem}
	Let $f\in L^N(\Omega)$ satisfy \eqref{f} and let $g,h$ satisfy \eqref{g1}, \eqref{gh} and \eqref{h1} with $\theta=1$ and $\gamma> 1$. If for some $\delta>0$ one has that $$\displaystyle \max\left(\max_{s\in [0,\delta)} [g(s)s], ||f||_{L^N(\Omega)}\mathcal{S}_1h(\infty)\right)<1\,,$$ then there exists a solution $u\in L^\infty(\Omega)$ to problem \eqref{pb} in the sense of Definition \ref{weakdefstrong}.
\end{theorem}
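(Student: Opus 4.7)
The strategy is to glue together the two pre\-existing arguments: the critical-case machinery of Theorem \ref{teomain2} (handling $\theta=1$) and the ``nonnegative data / strong singularity'' machinery of Theorem \ref{teo_p>1strong} (handling $\gamma>1$ and the use of Definition \ref{weakdefstrong}). As in those proofs, we approximate via \eqref{pbp} and we use $\Gamma$ defined as in \eqref{gamma1}, allowing it to blow up at zero; the assumption \eqref{gh} will serve exactly as before to bound $|\Gamma(s)|$ by $h(s)$ near the origin and thus to guarantee that $\Gamma_{p,p}(u_p)$ is locally $BV$. The strong positivity \eqref{f} will be used to upgrade $h(u)f \in L^1_{\rm loc}$ to $h(u)\in L^1_{\rm loc}$, hence to $u>0$ a.e. in every compact subset where $f\ge c_\omega$.

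The main step is the derivation of a priori estimates uniform in $p$. For the regime $u_p \ge \tilde k$ with $\tilde k \ge \tilde c +1$, one obtains $\|e^{\eta G_{\tilde k}(u_p)}-1\|_{W^{1,p}_0} \le C$ exactly as in Lemma \ref{lemmastimastrong}, using smallness of $\|f\|_{L^N}\mathcal S_1 h(\infty)$. In the intermediate regime $\delta\le u_p\le \tilde k$ we test with $e^{\eta p T_{\tilde k}(G_\delta(u_p))}-1$ and absorb the gradient term since $g$ is bounded away from zero on $[\delta,\tilde c+1]$. The new difficulty is near $u_p=0$, where both singularities interact: picking $k$ small enough and testing with $T_k^\gamma(u_p)$ produces, after splitting $|\nabla u_p|^p = |\nabla T_k(u_p)|^p + |\nabla G_k(u_p)|^p$ and using \eqref{g1} with $\theta=1$,
\begin{equation*}
\bigl(\gamma - \sup_{s\in[0,k)}[g(s)s]\bigr)\int_\Omega |\nabla T_k(u_p)|^p T_k^{\gamma-1}(u_p) \le C,
\end{equation*}
and the bracket is strictly positive by hypothesis \eqref{condiz} for $k\le \delta$; this yields a uniform bound on $\|T_k^{(\gamma-1+p)/p}(u_p)\|_{W^{1,p}_0}$. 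Combined with the estimates on the other two regimes and with local Caccioppoli-type bounds obtained by testing with $(T_k(u_p)-k)\varphi^p$, these bounds give that $u_p$ is locally bounded in $BV$, $u_p^\sigma$ is globally bounded in $BV$, $\|u_p\|_{L^\infty}\le \tilde c$, and $\Gamma_{p,p}(u_p)$ is locally $BV$.

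Passing to the limit up to a subsequence we obtain $u\in BV_{\rm loc}(\Omega)\cap L^\infty(\Omega)$ with $u^\sigma\in BV(\Omega)$, and by the usual weak compactness of $|\nabla u_p|^{p-2}\nabla u_p$ a vector field $z$ with $\|z\|_\infty\le 1$. Lower semicontinuity and Fatou give
\begin{equation*}
-\operatorname{div} z \ge |D\Gamma(u)| + h(u)f \quad \text{in } \mathcal M_{\rm loc}(\Omega),
\end{equation*}
from which Lemma \ref{lemmasalto} yields $D^j u=0$, and then Lemma \ref{lemmal1} gives $z\in \DM(\Omega)$. To recover the full Definition \ref{weakdefstrong} one shows first $\chi_{\{u>0\}}\in BV_{\rm loc}(\Omega)$ by testing \eqref{pbp} with $S_\delta(u_p)e^{\Gamma_{p,1}(u_p)}\varphi$ and sending $p\to 1^+$, then $\delta\to 0$, using $\{u=0\}\subset\{f=0\}$ (forced by $h(u)f\in L^1_{\rm loc}$ together with $h(0)=\infty$). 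Then the chain of (in)equalities
\begin{equation*}
e^{\Gamma(u)}|D\Gamma(u)| \le e^{\Gamma(u)}\bigl((-\operatorname{div} z)\chi^*_{\{u>0\}} - h(u)f\bigr) \le (\chi_{\{u>0\}}z, De^{\Gamma(u)}) \le |De^{\Gamma(u)}|
\end{equation*}
(exactly as in \eqref{proofdistrstrong}) forces equality throughout, and an application of Lemma \ref{chainrule} and of \cite[Proposition 2.2]{mazonsegura} produces \eqref{def_distrp=1strongnon} and \eqref{def_zp=1strongnon}.

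Finally, the boundary condition \eqref{def_bordop=1strongnon} is derived by testing \eqref{pbp} with $u_p^m$ for $m\ge 2\sigma$, passing to the limit by lower semicontinuity and strong $L^{N/(N-1)}$ convergence of $h_p(u_p)u_p^m$, and integrating by parts with the Green formula \eqref{green}; sending $m\to\infty$ gives $u^\sigma(1+[z,\nu])=0$ $\mathcal H^{N-1}$-a.e.\ on $\partial\Omega$. The principal obstacle in the whole program is the first one, i.e.\ producing a $p$-uniform estimate near $u=0$ that simultaneously reconciles the critical gradient singularity $g(s)\sim s^{-1}$ (which only allows $\sup g(s)s<1$ to play the role of coercivity constant) with the stronger zero-order singularity $\gamma>1$ (which prevents using $T_\varepsilon(u_p)$ linearly as in Lemma \ref{lemmastima}); the choice $T_k^\gamma(u_p)$ above is exactly tailored to make the two effects cooperate.
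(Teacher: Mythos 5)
Your proposal is correct and follows exactly the route the paper intends (the paper itself only remarks that the result follows by a ``straightforward adjustment'' of Theorem \ref{teomain2} combined with the machinery of Theorem \ref{teo_p>1strong}). The only genuinely new computation — testing with $T_k^\gamma(u_p)$ near $u_p=0$ and absorbing the critical gradient term via $\gamma-\sup_{s\in[0,k)}[g(s)s]>\gamma-1>0$ — is carried out correctly, and the rest is the assembly of the two existing proofs as you describe.
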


\begin{remark}\label{5.5}
Let us  stress that the  critical case  $\theta=1$  is much more delicate in   presence of a merely nonnegative datum $f$ not satisfying \eqref{f}.  In fact the set $\{u=0\}$ is not precluded  to have   positive measure so that the function $\Gamma(u)$ introduced by \eqref{gamma1} is not even defined, nor, a fortiori, the term   $g(u)|D u|$. 

Another remark is in order  on assumption \eqref{gh} since, if it is not in force,  then an  highly degenerate behavior of the approximating solutions $u_p$  could appear.   Let us think for instance at the  the case $h(0)<\infty$ and $g(s)\sim s^{-1} $ (notice that in this case assumption \eqref{gh} is not satisfied);   as we already mentioned (see Remark \ref{czero}) it is possible to show that, for data $f$ small enough,  then the bound $\tilde c$ given in \eqref{lim} is zero.  This implies that the approximating solutions $u_p\to 0$ a.e. on $\Omega$.  \bk
\end{remark}

\section{Remarks and examples}\label{rae}

\subsection{Breaking of nonexistence phenomenon for $p=1$} \label{break} Let $p>1$, consider $g(s)= \lambda s^{-1}$,  and $h(s)=s^{-\gamma}$ ($\gamma\ge 0$). We show that, if $\displaystyle \lambda\ge 1$, it can not exist $u$ solution to
\begin{equation}\label{noex}
-\Delta_p u = \frac{\lambda|\nabla u|^p}{u} + \frac{1}{u^\gamma}.
\end{equation}
By a solution to \eqref{noex} we mean a function $u\in W^{1,p}_0(\Omega)\cap L^\infty(\Omega)$ satisfying
$$\int_{\Omega}|\nabla u|^{p-2}\nabla u\cdot \nabla \varphi = \lambda\int_\Omega\frac{|\nabla u|^p}{u}\varphi + \int_{\Omega} \frac{\varphi}{u^\gamma},$$ 
for every $\varphi \in W^{1,p}_0(\Omega)\cap L^\infty(\Omega)$.
First of all observe that having $u^{-\gamma}\in L^1_{\rm loc}(\Omega)$ implies that $u>0$ almost everywhere in $\Omega$.
Moreover one can take $\varphi=u$ deducing that 
$$\int_{\Omega}|\nabla u|^p = \lambda \int_{\Omega} |\nabla u|^p + \int_\Omega u^{1-\gamma},$$
i.e.
$$\int_\Omega u^{1-\gamma}=0,$$
which is clearly a contradiction.

If $p=1$, making analogous  calculation one realizes  a correspondent nonexistence  instance for solutions with zero trace at the boundary; in fact,  assume  that, for $\lambda \ge 1$, there exist $u\in BV(\Omega)\cap L^\infty(\Omega)$ and $z\in \DM(\Omega)$, such that  $(z,Du)=|Du|$ and  
\begin{equation*}
-\operatorname{div}z = \frac{\lambda|D u|}{u} + \frac{1}{u^\gamma}.
\end{equation*}
Reasoning as in the proof of \eqref{peru}  it is possible to test the previous by $u$ obtaining
$$-\int_{\Omega}u\operatorname{div}z = \lambda \int_{\Omega} |D u| + \int_\Omega u^{1-\gamma},$$
which after an application of the Green formula takes to
$$\int_{\Omega}(z,Du) + \int_{\partial \Omega} u \ d\mathcal{H}^{N-1} =\int_{\Omega}|Du| + \int_{\partial \Omega} u \ d\mathcal{H}^{N-1}= \lambda \int_{\Omega} |D u| + \int_\Omega u^{1-\gamma},$$
which gives
$$\int_{\partial \Omega} u \ d\mathcal{H}^{N-1} = \int_\Omega u^{1-\gamma},$$
which is a contradiction if $u(x)=0$ for $\mathcal{H}^{N-1}$ almost every $x\in \partial \Omega$.

\medskip
\begin{remark}
 Unluckily one has to observe that the occurrence of the case  $u(x)=0$ for $\mathcal{H}^{N-1}$-a.e. $x$ on $\partial\Omega$ represents a quite rare case. In Example \ref{example2} below  we shall see a generic enough class of solutions whose boundary datum is assumed pointwise only at non-regular points of $\partial\Omega$. Observe that if $u$ is a non-trivial constant solution of the homogeneous Dirichlet problem associated to 
$$
-\Delta_1 u  =  {u^{-\gamma}}\,,
$$
then one trivially gets a solution to the homogeneous Dirichlet problem associated to 
$$
-\Delta_1 u  = \frac{\lambda|D u|}{u^\theta} + {u^{-\gamma}}\,;
$$
that is, in contrast with the case $p>1$ (with $\theta=1$), one should have existence of solutions beyond the threshold $\lambda=1$ (and, by the way, for any positive $\theta$). In Example \ref{example1} below we construct such  constant solutions.  
\bk\end{remark}

\subsection{Constant vs nonconstant solutions}
In the following example we show that, in certain model cases explicit  constant (non-trivial) solutions of problem \eqref{pb} can be found; it consists in a suitable re-interpretation of an example given in \cite{DGOP}. We first need the following 
\begin{defin}
		A bounded convex set $E$ of  class $C^{1,1}$  is said to be  {calibrable} if there exists a vector field $\xi\in L^\infty(\R^N, \R^N)$ such that $\|\xi\|_\infty\leq 1$, $(\xi,D\chi_E)=|D\chi_E|$ as measures,  and $$-{\rm div}\xi=\lambda_E\chi_E \quad {\rm in \ } \mathcal D'(\R^N) $$ for some constant $\lambda_E$. In this case  $\lambda_E=\frac{Per(E)}{|E|}$ and  $[\xi,\nu^E]=-1$, $\mathcal H^{N-1}$-a.e in $\partial E$ (see \cite[Section 2.3]{acc} and \cite{MP}).
	\end{defin}
	
There is plenty of calibrable sets, for instance  if $E=B_R(0)$, for some $R>0$, then $E$ is calibrable. More in general  	    a bounded and convex set $E$ is calibrable if and only if the following condition holds: $$(N-1)\|{\mathcal H}_E\|_{L^\infty(\partial E)}\leq \lambda_E=\frac{ Per(E)}{|E|},$$ where ${\mathcal H}_E$ denotes the ($\mathcal H^{N-1}$-a.e. defined) mean curvature of $\partial E$ (\cite[Theorem 9]{acc}).

\begin{example}
\label{example1} 
Let  $\theta, \gamma$, and  $\lambda$ be  three positive parameters and consider the following 
 	\begin{equation}\label{lm}
	\begin{cases}
	\displaystyle - \Delta_1 u=  \frac{\lambda |D u|}{u^\theta} +{u^{-\gamma}} &  \text{in}\, \Omega, \\
	u=0 & \text{on}\ \partial \Omega
	\,.
	\end{cases}
	\end{equation}
If $\Omega$ is a calibrable set, let us prove that $u=\left(\frac{|\Omega|}{Per(\Omega)}\right)^{\frac{1}{\gamma}}$ is a  solution to \eqref{lm}. It suffices to take the restriction to $\Omega$ of the vector field in the definition of calibrability; i.e.: $z:=\xi_{\res_\Omega}$. In fact, as $D u =0$,using  the properties of  $\xi$ one has
		\begin{equation}\label{usi}
		-{\rm div}z=\frac{Per(\Omega)}{|\Omega|}=u^{-\gamma}\ \ \ \text{and}\ \ \  [\xi,\nu^{\Omega}]=-1\,.
		\end{equation}
		
		Moreover, using both \eqref{green} \bk and \eqref{usi}, one has 
		$$\begin{array}{l}\displaystyle  (z,Du)(\Omega)=\int_\Omega \left(\frac{|\Omega|}{Per(\Omega)}\right)^{\frac{1}{\gamma}}\frac{Per(\Omega)}{|\Omega|}\,dx\\\\\displaystyle +\int_{\partial\Omega}[\xi,\nu^\Omega]\left(\frac{|\Omega|}{Per(\Omega)}\right)^{\frac{1}{\gamma}}\,d\mathcal H^{N-1}=0= |Du|(\Omega)\,;\end{array}$$
\end{example}
which implies the desired result.  

One may think that the previous example is generic enough in order to trivialize \eqref{pb} (at list in model cases). The following example of non-constant solutions to \eqref{pb} shows that this is not the case.

\begin{example}
\label{example2} Let us show a situation in which the unique solution $u$ to the following problem 
\begin{equation}
\label{pbe}
\begin{cases}
\dis -\Delta_1 u = {u^{-\gamma}} & \text{in}\;\Omega,\\
u=0 & \text{on}\;\partial\Omega,
\end{cases}
\end{equation}  
is, for $\gamma>0$, not constant; as we will see,  this fact will lead to a non-constant solution  of problem involving the gradient term. 
Let $\Omega$ be a convex open set and $H_{\Omega} (x)$ be the {variational mean curvature} of $\Omega$ (see \cite{BGT} for details). In \cite{MP} it is shown that $-H_{\Omega} (x)$ is a (so called) large solution to $\Delta_1 v= v $, i. e. 
\begin{equation}\label{large}			\begin{cases}
\displaystyle \Delta_1 v= v &  \text{in}\, \Omega, \\
v=\infty & \text{on}\ \partial \Omega\,.			\end{cases}
\end{equation}
Without entering into technicalities, only recall that $|| H_\Omega ||_{L^{\infty}(\rn)}< \infty$ if and only if $\Omega$ is of class $C^{1,1}$; 
in fact, if $\Omega$ is of class $C^{1,1}$ then it satisfies the uniform interior ball condition and so, the (unique) large solution of \eqref{large} is bounded	(\cite[Theorem 4.2]{MP}).  Viceversa, if $|| H_\Omega ||_{L^{\infty}(\rn)}< \infty$ then $\Omega$ is of class $C^{1,1}$ (\cite[Theorem 4.4]{MP}). 
In particular,  these  solutions are locally bounded and they assume the (large) datum $\infty$  at non-regular points of $\Omega$ (e.g. at corners).  		
Through the change of variable $u=v^{-\frac{1}{\gamma}}$ problem \eqref{large} formally transforms into \eqref{pbe} then one retrieves  that solutions to  problem \eqref{pbe} may be non-constant. In fact, in general the $H_\Omega (x)$ is known to be non-constant  if the set is not calibrable (see \cite{BGT,acc}); for instance if $\Omega$ is not $C^{1,1} $ (say a square), then  $u=v^{-\frac{1}{\gamma}}$ is positive everywhere and it attains the value $0$ only at the corners of $\Omega$. 
 \begin{figure}[htbp]\centering
\includegraphics[width=2in]{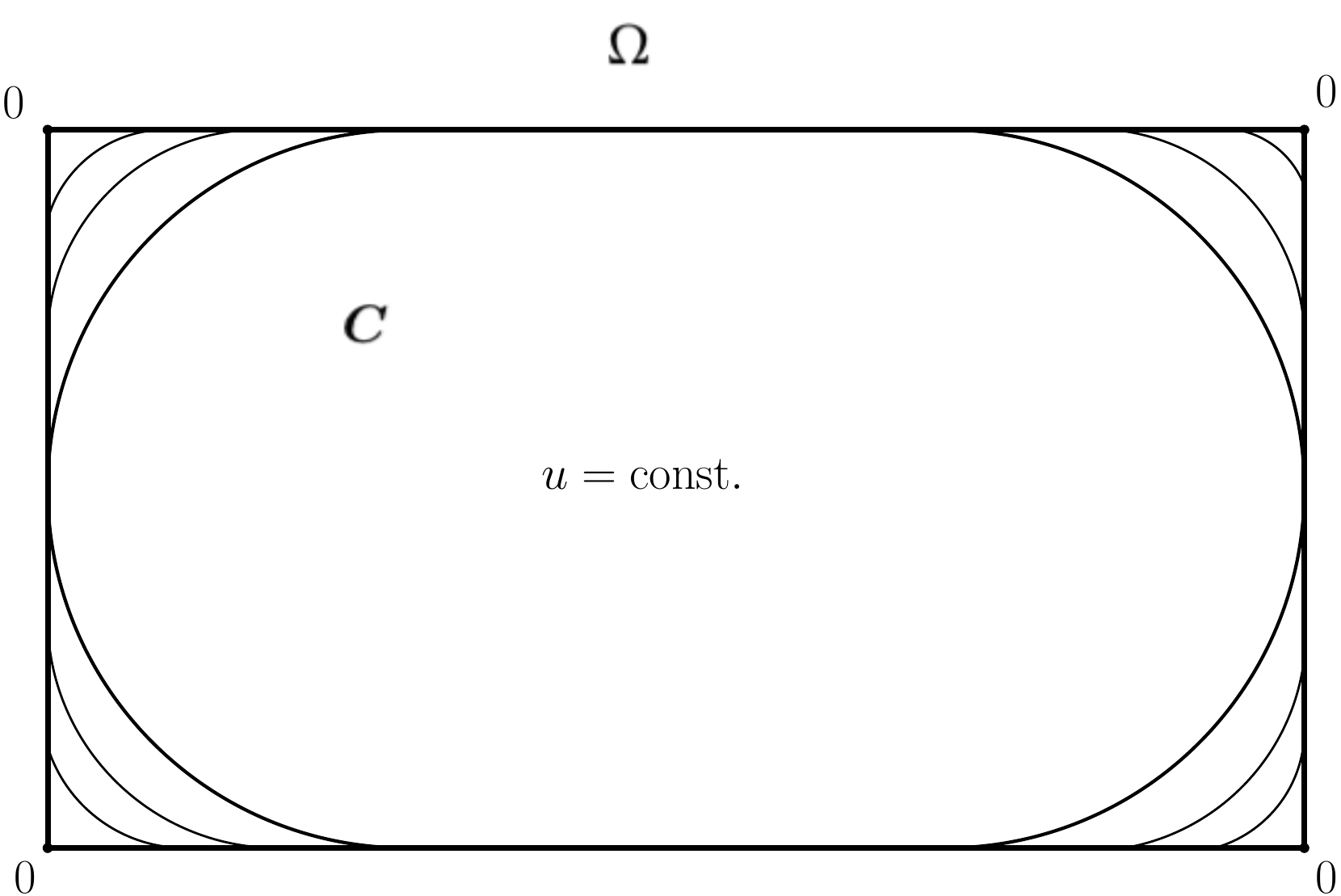}
\caption{A non-constant solution $(-H_\Omega(x))^{-\frac{1}{\gamma}}$ to \eqref{pbe}} \label{calo}
\end{figure}
Also observe that $u$ is constant inside the (unique) Cheeger $C$ set contained in $\Omega$, this constant being  nothing but a suitable power of the Cheeger constant of $C$ (see Figure \ref{calo}).

 \end{example}

The previous example of non-constant solution to \eqref{pbe} infers the nontriviality (even in the model case) to our problem \eqref{pb}: in fact,   assume   by contradiction  that $w$, solution to the Dirichlet problem associated to $$\dis -\Delta_1 w = g(w)|D w|+ w^{-\gamma} \ \text{in}\;\Omega,$$ is constant;  then $w$ is another solution to \eqref{pbe} which, by uniqueness (\cite[Theorem 3.5]{DGOP}), would give a contradiction  (the same argument applies for a smooth decreasing $h$).

\end{document}